\documentclass[11pt]{article}
\usepackage[T1]{fontenc}
\usepackage{lmodern}
\usepackage{amsmath,amssymb,amsthm,mathtools}
\usepackage{microtype,needspace}
\usepackage{booktabs,array,longtable,enumitem}
\usepackage[textwidth=5.65in,textheight=8.4in,centering]{geometry}
\usepackage[hidelinks]{hyperref}
\allowdisplaybreaks[1]
\newtheorem{theo}{Theorem}[section]
\newtheorem{prop}[theo]{Proposition}
\newtheorem{coro}[theo]{Corollary}
\newtheorem{lem}[theo]{Lemma}
\theoremstyle{definition}

\newtheorem{remark}[theo]{Remark}
\numberwithin{equation}{section}
\renewcommand\d{\mathop{}\!\mathrm{d}}
\hypersetup{pdfauthor={Matthew H. Y. Xie and Zuo-Ru Zhang},pdftitle={Unimodality of Kazhdan--Lusztig polynomials of sparse paving matroids}}

\begin{document}
\title{Unimodality of Kazhdan--Lusztig polynomials\\
of sparse paving matroids}
\author{%
  \begin{minipage}{\dimexpr\textwidth-2\tabcolsep\relax}
    \centering
    \normalsize Matthew H. Y. Xie$^1$ and Zuo-Ru Zhang$^{2,*}$ \\[6pt]
    \small
    $^1$School of Mathematical Sciences,\\
    Tianjin University of Technology, Tianjin 300384, P. R. China\\[6pt]
    $^2$School of Mathematical Sciences,\\
    Hebei Normal University, Shijiazhuang 050024, P. R. China\\[6pt]
    Email: $^1$\texttt{xie@email.tjut.edu.cn},
           $^2$\texttt{zrzhang@hebtu.edu.cn}\\[3pt]
    $^*$Corresponding author.
  \end{minipage}%
}
\date{}
\maketitle
\begin{abstract}
We prove that the Kazhdan--Lusztig polynomial of a loopless sparse paving
matroid is unimodal. When the rank is at least five and the corank is
positive, we show that the coefficient sequence is strictly unimodal
except for three explicitly determined parameter triples, each giving two
equal maximal coefficients. We also determine the modes and classify the
degenerate loopless sparse paving matroids of positive rank
and corank.
\end{abstract}
\noindent\textbf{Keywords.} Kazhdan--Lusztig polynomial, sparse paving matroid,
unimodality, mode, non-degeneracy.

\section{Introduction}
Elias, Proudfoot and Wakefield~\cite{EP2016} introduced the Kazhdan--Lusztig
polynomial of a matroid and conjectured that its coefficients are nonnegative.
Braden, Huh, Matherne, Proudfoot and Wang~\cite{BH2020} proved this conjecture using the intersection
cohomology of matroids. The stronger conjectures of log-concavity and
real-rootedness do not hold in general: Cheng and Liu~\cite{ChengLiu2026}
constructed, over every finite field, representable matroids whose
Kazhdan--Lusztig polynomials are not even unimodal. This leaves the question
of which natural classes of matroids still satisfy unimodality. We prove
that it holds for all loopless sparse paving matroids.

A matroid of rank $d$ is \emph{paving} if every circuit has at least $d$
elements. It is \emph{sparse paving} if both it and its dual are paving.
The size of this class gives our result an asymptotic consequence.
Let $s_n$ be the number of sparse paving matroids on a fixed $n$-element set,
and let $N_n$ be the number of all matroids on that set. Mayhew, Newman, Welsh and
Whittle~\cite{MN2011} conjectured that $s_n/N_n\to1$. Pendavingh and
van der Pol~\cite{PV2015} proved the weaker statement
\[
\lim_{n\to\infty}\frac{\log s_n}{\log N_n}=1.
\]
Together with our unimodality theorem, this implies that logarithmically
almost all matroids have unimodal Kazhdan--Lusztig polynomials; see
Corollary~\ref{cor:asymptotic}.

Sparse paving matroids also admit an explicit coefficient formula, which
relates their Kazhdan--Lusztig polynomials to those of uniform matroids.
Write $U_{m,d}$ for the uniform matroid of rank $d$ on $m+d$ elements.
Its Kazhdan--Lusztig polynomial is real-rooted for $m=1$
(see Gedeon, Proudfoot and Young~\cite{Gedeon2017}) and for
$2\leq m\leq15$ (Gao, Lu, Xie, Yang and Zhang~\cite{GL2021}).
Xie and Zhang~\cite{XZ2023}
proved log-concavity for all uniform matroids. Gao, Li, Xie, Yang and Zhang~\cite{GLXYZ2026}
established induced log-concavity for the equivariant Kazhdan--Lusztig
polynomials of uniform and $q$-uniform matroids. Each of these results implies
unimodality of the ordinary Kazhdan--Lusztig polynomials in the corresponding range.

Several families of graphic matroids also have real-rooted, and hence
unimodal, Kazhdan--Lusztig polynomials. Lu, Xie and Yang~\cite{LXY2022}
proved real-rootedness for the graphic matroids of fan graphs, wheel graphs
and squares of paths, and Zhang~\cite{Zhang2026} proved it for those of
thagomizer graphs $K_{1,1,n}$ and complete bipartite graphs $K_{2,n}$.

For a sparse paving matroid, the formula of
Lee, Nasr and Radcliffe~\cite{LN2021} expresses its Kazhdan--Lusztig
polynomial as the uniform polynomial minus a correction proportional to
the number of circuit-hyperplanes. Subtraction does not preserve unimodality, so the
uniform case alone does not imply the result proved here.

For a loopless matroid $M$, let $L(M)$ be its lattice of flats,
$\chi_M(t)$ its characteristic polynomial, and $\operatorname{rk}M$ its rank.
The Kazhdan--Lusztig polynomial $P_M(t)\in\mathbb Z[t]$ is uniquely determined
by the following conditions:
\begin{enumerate}[label=(\roman*),itemsep=2pt]
\item $P_M(t)=1$ if $\operatorname{rk}M=0$;
\item $\deg P_M(t)<\operatorname{rk}M/2$ if $\operatorname{rk}M>0$;
\item
$\displaystyle t^{\operatorname{rk}M}P_M(t^{-1})
=\sum_{F\in L(M)}\chi_{M_F}(t)P_{M^F}(t)$.
\end{enumerate}
Here $M_F$ denotes the restriction to $F$, and $M^F$ the contraction by $F$.
We consider Kazhdan--Lusztig polynomials only for loopless matroids.

A finite sequence $(c_0,\ldots,c_s)$ is \emph{unimodal} if, for some
$r\in\{0,\ldots,s\}$,
\[
c_0\leq\cdots\leq c_r\geq c_{r+1}\geq\cdots\geq c_s.
\]
It is \emph{strictly unimodal} if
\[
c_0<\cdots<c_r>c_{r+1}>\cdots>c_s.
\]
Either chain of inequalities may be empty.
An index $r$ is a \emph{mode} if $c_r=\max_{0\leq j\leq s}c_j$.
We apply these terms to a polynomial via its coefficient sequence ending
at its actual degree. Two adjacent equal maximal coefficients form a
\emph{plateau}.

In the statements below, $M$ is a loopless sparse paving matroid of rank $d$
on $m+d$ elements, and $\lambda$ is its number of circuit-hyperplanes.
\begin{theo}\label{thm:strict-unimodality}
The polynomial $P_M(t)$ is unimodal. If $d\geq5$ and $m\geq1$, it is
strictly unimodal unless
\[
(m,d,\lambda)=(4,5,3),\quad(5,5,25),\quad(6,5,66).
\]
In each of these three cases, its two nonconstant coefficients are equal.
\end{theo}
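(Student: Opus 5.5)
The plan is to work coefficient by coefficient from the explicit formula of Lee, Nasr and Radcliffe~\cite{LN2021}. Write $P_M(t)=\sum_{i\ge0}a_it^i$ and $P_{U_{m,d}}(t)=\sum_i u_i(m,d)t^i$. That formula gives
\[
a_i=u_i(m,d)-\lambda\, b_i(d)\qquad(0\le i<d/2),
\]
with $b_0=0$ and $b_i(d)>0$ for $1\le i<d/2$. Here $b_i(d)$ is an explicit count of skew Young tableaux, depending only on $d$ and $i$. I would also use the closed form for $u_i(m,d)$ as a product of binomial coefficients. The key structural point is that every difference
\[
\Delta_i:=a_{i+1}-a_i=\bigl(u_{i+1}-u_i\bigr)-\lambda\bigl(b_{i+1}-b_i\bigr)
\]
is an affine function of $\lambda$ for fixed $(m,d)$.

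The second ingredient is the range of $\lambda$. Any two circuit-hyperplanes of a sparse paving matroid share at most $d-2$ elements. Hence the $d$-subsets at Hamming distance one from a circuit-hyperplane are bases, and these neighbourhoods are disjoint. This gives the packing bound
\[
\lambda\le\Lambda(m,d):=\frac{1}{m+1}\binom{m+d}{d},
\]
and the same bound with $d+1$ in place of $m+1$ follows by duality. Because each $\Delta_i$ is affine in $\lambda$, its sign on $[0,\Lambda]$ is controlled by its values at the two endpoints.

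The argument then proceeds in the following order.

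1. For $d\le4$ we have $\deg P_M\le1$, so unimodality is automatic. For these small ranks nothing more is claimed.
2. For $d\ge5$, the case $\lambda=0$ is the uniform matroid, which is log-concave by~\cite{XZ2023}. It is therefore unimodal, with mode $r$ near a known location.
3. I would show that for $i<r-1$ the increments satisfy
\[
u_{i+1}-u_i>\Lambda\,(b_{i+1}-b_i)
\]
whenever $b_{i+1}>b_i$. This keeps $\Delta_i>0$ on the whole interval $[0,\Lambda]$.
4. For $i\ge r$, I would show $\Delta_i<0$ at both endpoints, so it is negative throughout.
5. This leaves only the difference $\Delta_{r-1}$ across the mode. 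This one may change sign, but a sign change there only moves the mode by one step and preserves unimodality.

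Strictness requires in addition that no $\Delta_i$ vanishes. For $d\ge6$, steps 3 and 4 should give strict inequalities, and the crossing difference $\Delta_{r-1}$ should be nonzero at every admissible $\lambda$. I would try to prove this by showing that the root $\lambda^*$ of $\Delta_{r-1}$ lies outside $[0,\Lambda]$ or fails to be an integer.

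For $d=5$ the polynomial has degree $2$. Strict unimodality can then fail only through $a_1=a_2$, and $a_1=a_2$ is a single linear equation in $\lambda$. Solving it gives $\lambda$ as a rational function of $m$. Requiring an integer solution with $0\le\lambda\le\Lambda(m,5)$, and realisable by a sparse paving matroid, should leave exactly the triples $(4,5,3)$, $(5,5,25)$ and $(6,5,66)$. In each of these the two nonconstant coefficients coincide, as the statement asserts. Realisability would be checked via known constructions, for instance a set of pairwise far $5$-subsets.

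The main obstacle is step 3: uniform inequalities comparing $u_{i+1}-u_i$ with $\Lambda\,(b_{i+1}-b_i)$ for all $m,d$. I would first simplify both sides to ratios of binomials. Then I would handle large $m$ asymptotically and the finitely many small $(m,d)$ by direct computation.
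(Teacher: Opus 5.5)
Your skeleton is the right one, and it is also the paper's: each difference is affine in $\lambda$, so everything reduces to comparing the uniform increment with the bound on $\lambda$ times the correction increment, and rank five is an explicit computation. But the proposal has no workable proof of the step you yourself call the main obstacle, and that step is essentially the whole theorem.

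\textbf{The route you suggest for the central inequality does not work.} There is no known single-product formula for $u_i(m,d)$ when $m\ge2$. The available formulas are sums, for example over $h=0,\dots,m-1$ in \eqref{eq:u-def-sum}. So the comparison cannot be reduced to ratios of binomials. Nor is ``large $m$ asymptotically plus finitely many small $(m,d)$'' a finite reduction. For each fixed $m$ the inequality must hold for all $d$ and all $i<d/2$, and the margins are not uniform: they become delicate near the lines $d=3i+1,3i+2,3i+3$. For instance, at $m=2$, $d=3i+3$ the weaker symmetric bound $\lambda_{\max}$ already fails, and on $d=3i+1$ the argument has to use the sharp bound $\frac1{\max\{m,d\}+1}\binom{m+d}{m}$. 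What is missing is an exact mechanism that carries the inequality through an infinite family. The paper works with the normalized ratio $\Delta_i u/(\lambda_{\max}\delta)$, its $b(m,d,i)$, where $\delta$ is your $b_{i+1}-b_i$. It then proceeds as follows:
\begin{enumerate}
\item Creative telescoping gives first-order affine recurrences, such as $b(m+1,d,i)=Q_3\,b(m,d,i)+R_3$ with $Q_3>0$.
\item Positivity certificates prove $Q_3+R_3>1$, so $b>1$ propagates in $m$.
\item The induction in $m$ starts from $m=2,3$, where $b-1$ is an explicit rational function of $(d,i)$ whose numerator has nonnegative coefficients after sector substitutions.
\end{enumerate}

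\textbf{You never locate the sign changes your steps depend on.} The correction increment $b_{i+1}(d)-b_i(d)$ is positive exactly when $d\ge3i+2$, and you do not establish this. You also do not determine the uniform mode $r$. Log-concavity of $P_{U_{m,d}}$ gives neither, yet your steps 3--5 are indexed by $r$.

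\textbf{Your strictness argument restates the goal.} Saying that the root $\lambda^*$ of the crossing difference ``lies outside $[0,\Lambda]$ or is not an integer'' is what must be proved, not a method. On $d=3i+2$ the root behaves differently in different ranges:
\begin{itemize}
\item It is negative for $m\le i+2$. At $m=i+2$ the paper needs a composite diagonal recurrence and the two-sided bound $-3/8<z(i)<0$.
\item At $m=i+3$ it exceeds the sharp bound only for $i\ge4$.
\item For $(m,d)=(5,8)$ and $(6,11)$ it genuinely lies inside the range allowed by the bound, at $247/3$ and $10336/11$. There only non-integrality saves strictness.
\end{itemize}

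\textbf{Two smaller points.}
\begin{itemize}
\item At $d=5$, strict unimodality can also fail through $a_1\le a_0=1$, which could even break unimodality. You therefore also need $a_1>1$, which the paper proves in Lemma~\ref{lem:a1-positive}.
\item Your packing argument is wrong as stated. The radius-one neighbourhoods of two circuit-hyperplanes meeting in $d-2$ elements do overlap. The correct count is that each $(d-1)$-subset lies in at most one circuit-hyperplane. This gives $d\lambda\le\binom{m+d}{d-1}$, which is the same bound $\lambda\le\frac1{m+1}\binom{m+d}{d}$.
\end{itemize}
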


We also determine the exact locations of the modes of $P_M(t)$.
For $m\geq2$ and $d\geq5$, every mode lies in
$\{\lfloor d/3\rfloor,\lfloor d/3\rfloor+1\}$, and the first of these
indices is the unique mode when $d\not\equiv2\pmod3$.

\begin{theo}\label{thm:mode}
Suppose $m\geq2$ and $d\geq5$. Define
\[
\kappa(m,d)=
\begin{cases}
\lfloor d/3\rfloor+1,&d\leq3m-7\text{ and }d\equiv2\pmod3,\\[2pt]
\lfloor d/3\rfloor,&\text{otherwise}.
\end{cases}
\]
Then $\kappa(m,d)$ is a mode of $P_M(t)$ unless $(m,d,\lambda)$ satisfies
one of the conditions
\[
\begin{array}{c@{\qquad}c}
(m,d)&\lambda\\ \hline
(4,5)&\lambda\geq4,\\
(5,5)&\lambda\geq26,\\
(5,8)&\lambda\geq83,\\
(6,11)&\lambda\geq940.
\end{array}
\]
In each such case, the unique mode is $\lfloor d/3\rfloor$.
\end{theo}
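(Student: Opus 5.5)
We start from the Lee--Nasr--Radcliffe formula~\cite{LN2021}. Writing $P_M(t)=\sum_i c_i t^i$, it reads
\[
c_i=u_i(m,d)-\lambda\,\delta_i(m,d),
\]
where $u_i(m,d)$ is the $i$th coefficient of $P_{U_{m,d}}(t)$. The correction $\delta_i$ is an explicit nonnegative binomial expression; it is essentially the $i$th coefficient of the contribution coming from a single circuit-hyperplane, $\delta_i=\frac{1}{d-i}\binom{d-i}{i}\cdots$ in the normalisation of~\cite{LN2021}. We also use the closed form
\[
u_i=\frac{m}{d-i}\binom{d-i}{i}\binom{m+d}{i}\cdots
\]
(the standard hypergeometric expression).

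The mode question is then a comparison of adjacent coefficients, via the signs of the differences
\[
\Delta_i=c_{i+1}-c_i=(u_{i+1}-u_i)-\lambda(\delta_{i+1}-\delta_i).
\]
So $\kappa=\kappa(m,d)$ is a mode exactly when $\Delta_{\kappa-1}\ge0$ and $\Delta_\kappa\le0$, together with the unimodality proved in Theorem~\ref{thm:strict-unimodality}, which makes the local conditions global. Each condition is linear in $\lambda$. We control $\lambda$ by the upper bound on the number of circuit-hyperplanes, $\lambda\le\lambda_{\max}(m,d)$, obtained from the fact that circuit-hyperplanes of a sparse paving matroid form a code in $\binom{[m+d]}{d}$ with pairwise symmetric difference at least $4$. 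This gives $\lambda\le\frac{1}{m+1}\binom{m+d}{d}$ (or the sharper Johnson-type bound if needed). Since a mode condition that holds at $\lambda=0$ and at $\lambda=\lambda_{\max}$ holds throughout by linearity, it suffices to check the two endpoints, unless the sign of $\delta_{i+1}-\delta_i$ flips the direction. In that case we compute the exact threshold
\[
\lambda^*=\frac{u_{i+1}-u_i}{\delta_{i+1}-\delta_i}.
\]

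The key steps, in order, are as follows.

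\textbf{Ratio estimates for $u_i$.} We estimate the ratio $u_{i+1}/u_i$ as a rational function of $(i,m,d)$. From it we show that, for the uniform matroid, the mode is $\lfloor d/3\rfloor$, or $\lfloor d/3\rfloor+1$ precisely when $d\equiv2\pmod 3$ and $d\le 3m-7$. The number $d/3$ is where $\binom{d-i}{i}$-type factors balance against the growth in $\binom{m+d}{i}$. The quantity $3m-7$ should emerge as the exact crossover of the sign of $u_{\lfloor d/3\rfloor+1}-u_{\lfloor d/3\rfloor}$ when $d=3k+2$.

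\textbf{Stability under the correction.} We show that subtracting $\lambda\delta$ does not move the mode. The relative correction $\lambda\delta_i/u_i$ is small and nearly constant near $i\approx d/3$ once we insert $\lambda\le\lambda_{\max}$. Concretely, we bound
\[
\bigl|\lambda(\delta_{i+1}-\delta_i)\bigr|<\bigl|u_{i+1}-u_i\bigr|
\]
at $i=\kappa-1$ and $i=\kappa$ for all $(m,d)$ outside a finite list, via explicit inequalities in $m$ and $d$. These inequalities are valid, say, for $d\ge 12$ or $m\ge 7$.

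\textbf{The residual pairs.} For the finitely many remaining pairs $(m,d)$, we compute $\lambda^*$ exactly. The listed exceptions arise exactly where $\lambda^*\le\lambda_{\max}$, giving the thresholds $4,26,83,940$. In those cases $\kappa=\lfloor d/3\rfloor+1$ and $\Delta_{\lfloor d/3\rfloor}<0$, so by unimodality the unique mode is $\lfloor d/3\rfloor$.

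The main obstacle is the stability step near the boundary $d\approx 3m-7$ with $d\equiv 2\pmod 3$. There $u_{\kappa}-u_{\kappa-1}$ is small, nearly a plateau, so the crude bound on $\lambda$ is too weak. In that regime we would first try the sharper code bound on $\lambda$ together with an exact factorisation of $\Delta_{\kappa-1}$ as a product of binomials times a polynomial in $(m,d)$, which we would then analyse by sign along the line $d=3m-7+3j$.
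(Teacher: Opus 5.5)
\textbf{Assessment: there is a genuine gap.} Your skeleton is sound. Each adjacent difference is affine in $\lambda$, so its sign is decided at $\lambda=0$ and at the upper bound for $\lambda$; this is exactly the paper's factorization $\Delta_i a=\delta(d,i)\bigl(\lambda_{\max}b-\lambda\bigr)$. The problem is that the two steps carrying all the content have no working method.

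Your ``ratio estimates'' presuppose a closed product form for $u_i$, and none exists. $u(m,d,i)$ is the $m$-term sum \eqref{eq:u-def-sum}, or equivalently the $(d-2i)$-term sum of Lemma~\ref{lem:u-fixed-d}. Already $u(m,d,1)=\frac{d}{m+1}\binom{m+d}{m}-(m+d)$ and $u(m,d,2)$ are non-proportional affine combinations of $\binom{m+d}{m}$ and $1$. So $u_{i+1}/u_i$ is not a rational function of $(i,m,d)$.

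Consequently nothing in your plan yields the sign of $u_{\kappa+1}-u_\kappa$. In particular it does not yield the exact crossover $d\le 3m-7$ on $d=3i+2$, which you only say ``should emerge.'' Likewise, your stability inequalities must hold uniformly in two unbounded parameters for these sums, and you give no mechanism for that.

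The ingredient you are missing is the paper's normalized difference $b=\Delta_i u/(\lambda_{\max}\delta)$, together with affine recurrences $b'=Q_jb+R_j$ in $m$, $d$, $i$ obtained from creative-telescoping identities. The paper uses them as follows.
\begin{itemize}
\item The base cases $m=2,3$ are settled by polynomial positivity.
\item The bound $b>1$ is propagated in $m$ via $Q_3>0$ and $Q_3+R_3>1$; on $d=3i+1$ the sharper comparison $b>\theta_d$ is used instead.
\item On $d=3i+2$, it shows $b<0$ for $m\le i+2$, using $R_3<0$ and a diagonal recurrence that traps $z(i)$ in $(-3/8,0)$.
\item It then shows $b>\theta_{3i+2}(i+3)$ for $i\ge4$, and $b>1$ for $m\ge i+4$.
\end{itemize}

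Your concrete criterion $\bigl|\lambda(\delta_{i+1}-\delta_i)\bigr|<\bigl|u_{i+1}-u_i\bigr|$ at $i=\kappa$ is also false on an infinite family, contrary to your claim that it holds for $d\ge12$. Take $d=3i+2$ and $m=i+2$, so $\kappa=i$. There $|b|<3/8$ by \eqref{eq:z-bounds}. Meanwhile the sum-mod-$n$ construction in the proof of Corollary~\ref{cor:asymptotic}, applied to $d$-subsets, gives actual matroids with
\[
\lambda\ge\binom{m+d}{d}/(m+d)>\lambda_{\max}/2,
\]
where $\lambda_{\max}=\frac{2}{m+d+2}\binom{m+d}{m}$ is the paper's normalization. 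The sign of the difference survives there only because $u_{i+1}-u_i<0<\delta_{i+1}-\delta_i$. So the magnitude comparison may be required only when the two differences have the same sign. Your endpoint formulation handles this correctly; the concrete version does not.

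Finally, your exception criterion $\lambda^*\le\lambda_{\max}$, with your $\lambda_{\max}=\frac1{m+1}\binom{m+d}{d}$, misfires at $(m,d)=(6,5)$. There $\lambda^*=66$ equals the bound $\binom{11}{5}/7$. Yet at $\lambda=66$ the difference vanishes, so $\kappa=2$ is still a mode. An exception requires an integer $\lambda>\lambda^*$ within the bound.
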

The realizability of these exceptional parameters is discussed in
Section~\ref{subsec:realizability}. In particular, the thresholds in the
theorem are not assertions that every larger value of $\lambda$ is possible.

For a matroid of rank $d\geq1$, the maximum possible degree of $P_M(t)$
is $\lfloor(d-1)/2\rfloor$. As a by-product, we also classify the cases of
positive corank in which this degree is not attained; see
Theorem~\ref{thm:nondegeneracy}.

The sparse paving formula makes each difference between consecutive
coefficients affine in $\lambda$. Bounds on the number of circuit-hyperplanes
therefore reduce the sign analysis to comparisons between the uniform and
correction differences. After normalization, an affine recurrence in the
corank propagates inequalities from coranks two and three. Separate estimates
handle the boundary cases, including the comparison between the two possible
modes when $d\equiv2\pmod3$.

The rest of the paper is organized as follows.
Section~\ref{sec:recurrences} contains the coefficient formulas and
recurrences. The inequalities in Section~\ref{sec:unimodality} prove
unimodality, and Section~\ref{sec:mode} completes the determination of the
modes and plateaux. We treat non-degeneracy in
Section~\ref{sec:nondegeneracy}. The longer algebraic comparisons and the
explicit recurrence coefficients are collected in the appendices.

\section{Coefficient formulas and recurrences}\label{sec:recurrences}
\subsection{The sparse paving correction}
Let $\mathrm{SP}(d,m+d)$ denote the set of sparse paving matroids of rank
$d$ on $m+d$ elements. Throughout this section, $M\in\mathrm{SP}(d,m+d)$
has $\lambda$ circuit-hyperplanes. For $m\geq1$ and $d\geq2$, the
coefficient formula of Lee, Nasr and Radcliffe~\cite[Theorem~12 and Lemma~4]{LN2021} gives
\begin{equation}\label{eq:sp-kl-formula}
P_M(t)=P_{U_{m,d}}(t)
-\lambda\bigl(P_{U_{1,d}}(t)-P_{U_{1,d-1}}(t)\bigr).
\end{equation}
Write $u(m,d,i)=[t^i]P_{U_{m,d}}(t)$ and $a(m,d,i)=[t^i]P_M(t)$.
We set these coefficients to zero outside the degree range. Although the
notation for $a$ suppresses $\lambda$, this parameter is fixed whenever
coefficients of $M$ are compared. Equation~\eqref{eq:sp-kl-formula} becomes
\begin{equation}\label{eq:a-coeff}
a(m,d,i)=u(m,d,i)-\lambda\omega(d,i),
\qquad \omega(d,i)=u(1,d,i)-u(1,d-1,i).
\end{equation}
Lee, Nasr and Radcliffe~\cite[Corollary~16 and Theorem~17]{LN2021} give the bounds
\begin{equation}\label{eq:lambda-range}
0\leq\lambda\leq\frac1{\max\{m,d\}+1}\binom{m+d}{m}
\leq\lambda_{\max}(m,d),
\end{equation}
where the symmetric bound is denoted by
\begin{equation}\label{eq:lambda-max}
\lambda_{\max}(m,d)=\frac2{m+d+2}\binom{m+d}{m}.
\end{equation}
We will also occasionally use the sharper first bound.

\subsection{Coefficients of Kazhdan--Lusztig polynomials of uniform matroids}
For $m\geq1$ and $1\leq i\leq\lfloor(d-1)/2\rfloor$,
Gao, Lu, Xie, Yang and Zhang~\cite{GL2021} give
\begin{equation}\label{eq:u-def-sum}
u(m,d,i)=\sum_{h=0}^{m-1}
\frac1{d-i}\binom{h+i-1}{h}
\binom{d+h-i}{h+i+1}\binom{d+m}{i}.
\end{equation}
The constant coefficient is $u(m,d,0)=1$. Denote the summand by
\begin{equation}\label{eq:v-def}
v_h(m,d,i)=\frac1{d-i}\binom{h+i-1}{h}
\binom{d+h-i}{h+i+1}\binom{d+m}{i}.
\end{equation}
We first record the corank-one case.

\begin{lem}\label{lem:m-one-mode}
Let $M\in\mathrm{SP}(d,d+1)$ be loopless of positive rank. Then
$\lambda\leq1$, and $P_M(t)$ is strictly unimodal. Its mode is
$\lfloor d/3\rfloor$ for $\lambda=0$ and $\lfloor(d-1)/3\rfloor$ for
$\lambda=1$.
\end{lem}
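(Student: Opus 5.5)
The first step is the bound on $\lambda$. For $m=1$ the sharper first bound in \eqref{eq:lambda-range} gives $\lambda\le\frac1{d+1}\binom{d+1}{1}=1$, so $\lambda\in\{0,1\}$. When $d\geq2$, formula \eqref{eq:sp-kl-formula} with $m=1$ and $\lambda=1$ collapses to
\[
P_M(t)=P_{U_{1,d}}(t)-\bigl(P_{U_{1,d}}(t)-P_{U_{1,d-1}}(t)\bigr)=P_{U_{1,d-1}}(t).
\]
So the case $\lambda=1$ is the case $\lambda=0$ with $d$ replaced by $d-1$, and $\lfloor (d-1)/3\rfloor$ is exactly the predicted mode. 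The case $d=1$ (and the degenerate small ranks $d\le 2$, where $P_M=1$) will be checked directly. It then suffices to show that for every $d\ge1$ the polynomial $P_{U_{1,d}}(t)$ is strictly unimodal with mode $\lfloor d/3\rfloor$.

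For $m=1$ only the summand $h=0$ survives in \eqref{eq:u-def-sum}. I would write
\[
c_i:=u(1,d,i)=\frac{(d-i-1)!}{(i+1)!\,(d-2i-1)!}\cdot\frac{(d+1)!}{i!\,(d+1-i)!},
\qquad 0\le i\le\lfloor (d-1)/2\rfloor .
\]
All these coefficients are positive; one checks that $i=0$ gives $1$, consistent with $u(m,d,0)=1$. I would then compute the consecutive ratio. For $0\le i<\lfloor (d-1)/2\rfloor$,
\[
r_i:=\frac{c_{i+1}}{c_i}=\frac{(d-2i-1)(d-2i-2)(d+1-i)}{(d-i-1)(i+1)(i+2)} .
\]
Strict unimodality with mode $\lfloor d/3\rfloor$ is then equivalent to two conditions: $r_i>1$ for $0\le i\le \lfloor d/3\rfloor-1$, and $r_i<1$ for $\lfloor d/3\rfloor\le i<\lfloor(d-1)/2\rfloor$. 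If $\lfloor d/3\rfloor$ exceeds the degree range, which happens only for tiny $d$, a direct check handles it.

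To establish the sign pattern, I would clear denominators and study the cubic
\[
f_d(i)=(d-2i-1)(d-2i-2)(d+1-i)-(d-i-1)(i+1)(i+2).
\]
I would show that $r_i$ is strictly decreasing in $i$ on the relevant range, since each factor of the numerator decreases and each factor of the denominator increases. Alternatively, the real-rootedness of $P_{U_{1,d}}$ from~\cite{Gedeon2017} gives log-concavity, hence monotonicity of $r_i$. With monotonicity in hand, it suffices to check two signs: $f_d(\lfloor d/3\rfloor-1)>0$ and $f_d(\lfloor d/3\rfloor)<0$. Writing $d=3k+e$ with $e\in\{0,1,2\}$ turns each of these into an explicit polynomial in $k$. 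I expect the leading terms to have the right sign, giving a quick verification for $k$ beyond a small threshold, with the few remaining values of $d$ checked by hand.

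The main obstacle is this boundary sign check. A plateau $c_i=c_{i+1}$ would require an exact integer root of $f_d$, and the margins at $i=\lfloor d/3\rfloor$ are only of lower order in $k$. The residue classes, especially $e=2$, must therefore be handled carefully. If the leading-order argument is too tight, I would first try computing $f_d$ at $i=k-1$ and $i=k$ exactly as a polynomial in $k$ for each $e$ and reading off its sign directly.
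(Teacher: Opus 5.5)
Your proposal is correct and runs the monotonicity in the other variable from the paper. The paper fixes $i$ and shows that the ratio $R_i(d)$ is strictly increasing in $d$ on $d\ge 2i+3$. Two evaluations, $R_i(3i+2)=i(2i+3)/((i+2)(2i+1))<1$ and $R_i(3i+3)=(i+2)/(i+1)>1$, together with $R_0>1$, then show that $R_i(d)>1$ exactly when $d\ge 3i+3$, that is, when $i<\lfloor d/3\rfloor$. No residue classes are needed.

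You instead fix $d$ and vary $i$. This gives the extra fact that $r_i$ decreases, i.e.\ log-concavity of $P_{U_{1,d}}$, at the cost of splitting $d=3k+e$. The boundary check you flagged as the main obstacle is immediate, because the cubic terms cancel:
\[
f_{3k}(k)=4-10k^2,\qquad f_{3k+1}(k)=-6k(k+1),\qquad f_{3k+2}(k)=-2(k+1)^2,
\]
\[
f_{3k}(k-1)=2k(k+1),\qquad f_{3k+1}(k-1)=6(k+1)^2,\qquad f_{3k+2}(k-1)=10k^2+30k+24.
\]
All six have the required sign for $k\ge1$, so no plateau occurs.

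The one real error is your justification that $r_i$ decreases in $i$. The denominator factor $d-i-1$ decreases as $i$ grows. In fact $(d+1-i)/(d-i-1)=1+2/(d-i-1)$ increases with $i$, so the argument that every numerator factor decreases and every denominator factor increases proves nothing.

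Repair it by pairing factors, as the paper does in the $d$ direction:
\[
r_i=\frac{d-2i-2}{d-i-1}\cdot\frac{(d-2i-1)(d+1-i)}{(i+1)(i+2)},
\qquad \frac{d-2i-2}{d-i-1}=1-\frac{i+1}{d-i-1}.
\]
On $0\le i<\lfloor(d-1)/2\rfloor$ we have $d-2i-2\ge1$, so the first factor is positive and decreasing, and $r_i$ becomes a product of positive decreasing factors.

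Your fallback via real-rootedness from Gedeon--Proudfoot--Young and Newton's inequalities is also valid. Even weak log-concavity suffices for your two-point check. However, it imports a much heavier result for an elementary fact.
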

\begin{proof}
For $d=1$, looplessness gives $\lambda=0$ and $P_M(t)=1$.
For $d\geq2$, use \eqref{eq:lambda-range} and \eqref{eq:sp-kl-formula}:
the two possible polynomials are $P_{U_{1,d}}$ and $P_{U_{1,d-1}}$.
It is therefore enough to treat the uniform case. Its coefficients satisfy
\[
u(1,d,i)=\frac1{d-i}\binom{d+1}{i}\binom{d-i}{i+1},
\qquad0\leq i\leq\left\lfloor\frac{d-1}{2}\right\rfloor.
\]
For $0\leq i<\lfloor(d-1)/2\rfloor$, the consecutive ratios are
\[
R_i(d)=\frac{(d+1-i)(d-2i-1)(d-2i-2)}
{(d-i-1)(i+1)(i+2)}.
\]
For fixed $i$, the ratio is strictly increasing with $d$ on $d\geq2i+3$.
Indeed, $(d-2i-2)/(d-i-1)=1-(i+1)/(d-i-1)$ is positive and increasing,
as are the other factors that depend on $d$. For $i\geq1$, the two
boundary values are
\[
R_i(3i+2)=\frac{i(2i+3)}{(i+2)(2i+1)}<1,
\qquad R_i(3i+3)=\frac{i+2}{i+1}>1.
\]
For $i=0$, the admissible ratios are $(d+1)(d-2)/2>1$.
Thus the coefficients increase exactly while $i<\lfloor d/3\rfloor$
and decrease thereafter. Replacing $d$ by $d-1$ gives the mode
$\lfloor(d-1)/3\rfloor$ when $\lambda=1$.
\end{proof}

We next derive the three recurrences for the uniform coefficients.
They can be obtained by creative telescoping using the package of
Koutschan~\cite{KoutschanHF}; we give
the finite telescoping identities so that the proof can be checked directly.

\begin{lem}
\label{lem:u-inhomogeneous-recurrences}
For integers $m\geq 1$, $d\geq 1$, and
$1\leq i\leq \lfloor(d-1)/2\rfloor$, the following recurrences in $m$
and $d$ hold:
\begin{equation}\label{eq:u-shift-m}
\begin{aligned}
u(m+1,d,i)
&=
\frac{d+m+1}{d-i+m+1}\,u(m,d,i)  \\
&\quad
+
\frac{1}{d-i}
\binom{i+m-1}{m}
\binom{d+m+1}{i}
\binom{d-i+m}{i+m+1}.
\end{aligned}
\end{equation}
\begin{equation}\label{eq:u-shift-d}
\begin{aligned}
u(m,d+1,i)
&=
\frac{d+m+1}{d-i+m+1}\,u(m,d,i)\\
&\quad
+
\frac{(d+m+1)!}
{(d-i)(d-i+1)(i+m)(i-1)!i!(m-1)!}\\
&\quad\times
\frac{1}{(d-2i)!(d-i+m+1)}.
\end{aligned}
\end{equation}
Moreover, for $1\leq i\leq \lfloor(d-3)/2\rfloor$, the following recurrence
in $i$ holds:
\begin{equation}\label{eq:u-shift-i}
\begin{aligned}
u(m,d,i+1)
&=
\frac{i-d-m}{i+1}\,u(m,d,i)\\
&\quad
+
\frac{m\bigl(d^2-3di-d+3i^2+im+2i\bigr)}
{i(i+1)(d-i-1)(d-i)}\\
&\quad\times
\binom{i+m-1}{m}
\binom{d+m}{i}
\binom{d-i+m}{i+m+1}.
\end{aligned}
\end{equation}
\end{lem}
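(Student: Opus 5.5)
We prove all three recurrences from the sum formula \eqref{eq:u-def-sum} by creative telescoping, made explicit. For each recurrence we exhibit a rational certificate $R(h)$ (rational in $h,m,d,i$) and set $G_h=R(h)\,v_h$, so that a first-order relation among the summands telescopes. The two summands involved are related by hypergeometric term ratios, so checking each such relation reduces to a polynomial identity.

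For \eqref{eq:u-shift-m}, note first that $v_h(m,d,i)$ depends on $m$ only through the factor $\binom{d+m}{i}$. Hence
\[
v_h(m+1,d,i)=\frac{d+m+1}{d+m+1-i}\,v_h(m,d,i).
\]
Summing over $0\le h\le m-1$ gives the first term of \eqref{eq:u-shift-m}. The extra summand $v_m(m+1,d,i)$ is exactly the displayed inhomogeneous term. This case therefore needs no telescoping at all.

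For \eqref{eq:u-shift-d}, the ratio $v_h(m,d+1,i)/v_h(m,d,i)$ is rational in $h$, but it is not constant. We look for $G_h=R(h)v_h(m,d,i)$ with
\[
v_h(m,d+1,i)-\tfrac{d+m+1}{d-i+m+1}v_h(m,d,i)=G_{h+1}-G_h .
\]
To find $R$, we divide through by $v_h(m,d,i)$ and use the ratio $v_{h+1}/v_h$. This ratio equals
\[
\frac{(h+i)(d+h+1-i)}{(h+1)(h+i+2)}.
\]
The condition on $R$ becomes a first-order linear difference equation with rational coefficients. We expect a solution of low degree in $h$, with a denominator like $(d-i+1)(i+m)$. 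We solve for its coefficients by ansatz, guided by the Koutschan output. Summing from $h=0$ to $m-1$ leaves $G_m-G_0$. We check that $G_0=0$, since the certificate vanishes at $h=0$ or the boundary binomials kill it. Then $G_m$ must simplify to the stated factorial expression.

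For \eqref{eq:u-shift-i} we follow the same scheme. We seek
\[
v_h(m,d,i+1)+\tfrac{d+m-i}{i+1}v_h(m,d,i)=G_{h+1}-G_h .
\]
Here the ratio $v_h(m,d,i+1)/v_h(m,d,i)$ is again rational in $h$. The quadratic factor $d^2-3di-d+3i^2+im+2i$ in the target should appear in the boundary term $G_m$. We must check that the ranges of $i$ keep every binomial in its standard range. The restriction $i\le\lfloor(d-3)/2\rfloor$ guarantees that $i+1$ is still an admissible index and that no denominator $d-i-1$ vanishes.

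The main obstacle is finding and verifying the certificates for \eqref{eq:u-shift-d} and \eqref{eq:u-shift-i}. Verification amounts to clearing denominators in a rational-function identity, which is routine but heavy. The boundary terms also need care: the $h=0$ term contains $\binom{i-1}{0}$, the binomials $\binom{d+h-i}{h+i+1}$ may vanish near the ends, and $G_m$ must collapse to the closed forms shown. We would obtain $R$ from Koutschan's HolonomicFunctions package, then state it explicitly so the telescoping identity can be checked by hand.
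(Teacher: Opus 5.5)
Your route is the paper's. The $m$-shift goes via $v_h(m+1,d,i)=\frac{n+1}{n-i+1}v_h(m,d,i)$ plus the new summand at $h=m$. The $d$- and $i$-shifts go via certificates $G_h=R(h)v_h$ with $G_0=0$ and the inhomogeneous term equal to $G_m$ (here $n=m+d$).

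The gap is that you never produce the certificates for \eqref{eq:u-shift-d} and \eqref{eq:u-shift-i}, and they are the entire content of the proof. ``Solve by ansatz, guided by the package output'' describes a search, not a verification. The paper writes them down:
\[
Z_h=\frac{n+1}{n-i+1}\cdot\frac{h(h+i+1)}{(d-2i)(d-i+1)}\,v_h,\qquad
Y_h=\frac{(n-i)\,h\,\ell(h)}{i(i+1)(d-i-1)(d+h-i)}\,v_h,
\]
with $v_h=v_h(m,d,i)$ and $\ell(h)=d^2-3di-d+3i^2+ih+2i$.

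The first is easy to find along your lines. Put $\rho(h)=v_{h+1}/v_h$; the choice $R(h)=c\,h(h+i+1)$ clears the denominator $(h+1)(h+i+2)$ of $\rho$ after the shift. Then $(h+i)(d+h-i+1)-h(h+i+1)=h(d-i)+i(d-i+1)$ is exactly the numerator of the target $\frac{n+1}{n-i+1}\cdot\frac{h(d-i)+i(d-i+1)}{(d-2i)(d-i+1)}$, obtained by dividing the left side by $v_h$. The resulting denominator is $(n-i+1)(d-2i)(d-i+1)$, not $(d-i+1)(i+m)$ as you guessed. The $i+m$ in \eqref{eq:u-shift-d} comes from $v_m$ itself: $\binom{m+i-1}{m}\binom{d+m-i}{m+i+1}$ carries $1/((m+i)(m+i+1))$, and the certificate cancels $m+i+1$.

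For the $i$-shift, a polynomial-in-$h$ ansatz of the kind you expect for the $d$-shift cannot succeed. Dividing the left side by $v_h$ gives
\[
\frac{n-i}{i+1}\Bigl(1+\frac{(d-i)(h+i)(d-2i-1)(d-2i-2)}{i(d-i-1)(d+h-i)(h+i+2)}\Bigr).
\]
This has a genuine pole at $h=i-d$: for $d\geq2i+3$ the numerator is nonzero there, and $i-d\notin\{-1,-i-2\}$. For polynomial $R$, however, $R(h+1)\rho(h)-R(h)$ can only have poles at $h=-1$ and $h=-i-2$. That is why $Y_h$ has $d+h-i$ in its denominator. At $h=m$ this factor equals $n-i$ and cancels, which is how $\ell(m)$ and the stated form of \eqref{eq:u-shift-i} appear.

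Once both certificates are written down, each telescoping identity is a short polynomial check. For the $i$-shift, after clearing $(d+h-i)(h+i+2)$, both sides are quadratics in $h$ with equal coefficients. Your argument then closes as you describe.
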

\begin{proof}
Put $n=m+d$. The identity
\[
v_h(m+1,d,i)=\frac{n+1}{n-i+1}v_h(m,d,i)
\]
and the new summand at $h=m$ immediately give \eqref{eq:u-shift-m}.

For the other two shifts, abbreviate $v_h=v_h(m,d,i)$ and put
\[
\ell(h;d,i)=d^2-3di-d+3i^2+ih+2i.
\]
Define
\begin{align*}
Z_h&=\frac{n+1}{n-i+1}
\frac{h(h+i+1)}{(d-2i)(d-i+1)}v_h,\\
Y_h&=\frac{h(n-i)\ell(h;d,i)}
{i(i+1)(d-i-1)(d+h-i)}v_h.
\end{align*}
Using
\[
\frac{v_{h+1}}{v_h}
=\frac{(h+i)(d+h-i+1)}{(h+1)(h+i+2)},
\]
we obtain the two identities
\begin{align*}
v_h(m,d+1,i)-\frac{n+1}{n-i+1}v_h
&=Z_{h+1}-Z_h,\\
v_h(m,d,i+1)-\frac{i-n}{i+1}v_h
&=Y_{h+1}-Y_h.
\end{align*}
Sum over $h=0,\ldots,m-1$. Since $Z_0=Y_0=0$, the right-hand sides
are $Z_m$ and $Y_m$, respectively. Simplifying these boundary terms gives
exactly the inhomogeneous terms in \eqref{eq:u-shift-d} and
\eqref{eq:u-shift-i}. On the respective ranges of the lemma, $v_h>0$ and all the denominators
in these identities are nonzero.
\end{proof}

To compute the coefficient at index $\lfloor(d-1)/2\rfloor$, we use the
following formula, whose summation range is independent of $m$.
\begin{lem}\label{lem:u-fixed-d}
For $m\geq1$ and $1\leq i\leq\lfloor(d-1)/2\rfloor$,
\begin{equation}\label{eq:u-fixed-d}
u(m,d,i)=\frac{\binom{d+m}{i}}{(i+m)(i-1)!(m-1)!}
\sum_{h=2i}^{d-1}\frac{(h-i+m)!}{(h-i)(h-i+1)(h-2i)!}.
\end{equation}
\end{lem}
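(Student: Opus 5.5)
The plan is to prove \eqref{eq:u-fixed-d} by induction on $d$ with $m$ and $i$ fixed. Both sides will be shown to satisfy the same first-order recurrence in $d$ (namely \eqref{eq:u-shift-d}) and to agree at the smallest admissible value $d=2i+1$. Write $w(m,d,i)$ for the right-hand side of \eqref{eq:u-fixed-d} and put $n=m+d$.

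\emph{Base case $d=2i+1$.} The sum in $w$ has the single term $h=2i$, so
\[
w(m,2i+1,i)=\frac{\binom{2i+1+m}{i}\,(i+m)!}{(i+m)(i-1)!(m-1)!\,i(i+1)}
=\frac{1}{i+1}\binom{2i+1+m}{i}\binom{i+m-1}{m-1}.
\]
On the other side, \eqref{eq:u-def-sum} with $d=2i+1$ has $\binom{d+h-i}{h+i+1}=\binom{h+i+1}{h+i+1}=1$ in each summand $v_h$. Hence
\[
u(m,2i+1,i)=\frac{1}{i+1}\binom{2i+1+m}{i}\sum_{h=0}^{m-1}\binom{h+i-1}{h},
\]
and the hockey-stick identity $\sum_{h=0}^{m-1}\binom{h+i-1}{h}=\binom{i+m-1}{m-1}$ gives equality with $w(m,2i+1,i)$.

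\emph{Inductive step.} Assume $d\geq 2i+1$. Passing from $d$ to $d+1$ changes $w$ in two ways. First, the prefactor $\binom{d+m}{i}$ becomes $\binom{n+1}{i}=\frac{n+1}{n+1-i}\binom{n}{i}$, while the rest of the prefactor does not depend on $d$. Second, the sum acquires exactly one new term, at $h=d$. Therefore
\[
w(m,d+1,i)-\frac{n+1}{n-i+1}\,w(m,d,i)
=\frac{\binom{n+1}{i}}{(i+m)(i-1)!(m-1)!}\cdot\frac{(d-i+m)!}{(d-i)(d-i+1)(d-2i)!}.
\]
Next, expand $\binom{n+1}{i}=(n+1)!/\bigl(i!\,(n+1-i)!\bigr)$ and use $(d-i+m)!/(n+1-i)!=1/(d-i+m+1)$. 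This turns the right-hand side into exactly the inhomogeneous term of \eqref{eq:u-shift-d}. Thus $w$ satisfies \eqref{eq:u-shift-d}, which $u$ satisfies by Lemma~\ref{lem:u-inhomogeneous-recurrences}.

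Since that recurrence determines the value at $d+1$ from the value at $d$, and the admissible range $1\leq i\leq\lfloor(d-1)/2\rfloor$ is preserved when $d$ increases, induction gives $u=w$ for all $d\geq 2i+1$, which is the full range in the statement.

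The only delicate points are bookkeeping. One must check that the recurrence \eqref{eq:u-shift-d} is being used within its stated range (it is, since $i\leq\lfloor(d-1)/2\rfloor$). One must also confirm that the factorial simplification matches the inhomogeneous term exactly, with no stray factor of $i$ or $m$. I expect that factorial matching to be the main, though routine, obstacle.
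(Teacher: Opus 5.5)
Your proposal is correct and is essentially the paper's argument: the same hockey-stick evaluation at the base case $d=2i+1$, and the same use of the $d$-recurrence \eqref{eq:u-shift-d}, including the factorial matching you carried out. The only difference is presentation: the paper normalizes to $u(m,d,i)/\binom{d+m}{i}$ and telescopes the differences, while you show directly that both sides satisfy the same first-order recurrence.
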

\begin{proof}
Fix $m,i$ and put $U_d=u(m,d,i)/\binom{d+m}{i}$.
Dividing \eqref{eq:u-shift-d} by $\binom{d+m+1}{i}$ gives
\[
U_{d+1}-U_d=
\frac{(d-i+m)!}{(i+m)(i-1)!(m-1)!(d-i)(d-i+1)(d-2i)!}.
\]
At $d=2i+1$, the defining sum and the identity
$\sum_{h=0}^{m-1}\binom{h+i-1}{h}=\binom{m+i-1}{m-1}$ give
\[
U_{2i+1}=\frac1{i+1}\binom{m+i-1}{m-1}
=\frac1{(i+m)(i-1)!(m-1)!}\frac{(i+m)!}{i(i+1)}.
\]
This is the $h=2i$ summand in the claimed formula. Summing the differences
from $2i+1$ to $d-1$ supplies all the remaining terms.
\end{proof}

\subsection{The sign of the correction difference}
We write $\Delta_i f(d,i)=f(d,i+1)-f(d,i)$ and
$\Delta_d f(d,i)=f(d+1,i)-f(d,i)$.
For $d\geq3$ and $1\leq i\leq\lfloor(d-1)/2\rfloor$, the
corank-one coefficient formula gives
\begin{equation}\label{eq:delta-d-u1}
\omega(d,i)=\Delta_d u(1,d-1,i)
=\frac{2d!}{(d-i-1)(d-i+1)(i-1)!(i+1)!(d-2i-1)!}>0.
\end{equation}
For $d\geq5$ and $1\leq i\leq\lfloor(d-3)/2\rfloor$, put
\begin{equation}\label{eq:delta-def}
\delta(d,i)=\omega(d,i+1)-\omega(d,i).
\end{equation}

\begin{lem}\label{lem:delta-sign}
For $d\geq5$ and $1\leq i\leq\lfloor(d-3)/2\rfloor$,
$\delta(d,i)$ is positive if and only if $d\geq3i+2$.
It is negative if and only if $d\leq3i+1$.
In particular, $\delta(d,i)$ never vanishes.
\end{lem}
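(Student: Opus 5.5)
Using the closed form \eqref{eq:delta-d-u1}, I will compute the ratio $\omega(d,i+1)/\omega(d,i)$ explicitly; the sign of $\delta(d,i)$ is then the sign of this ratio minus one, because $\omega(d,i)>0$. The hypothesis $i+1\leq\lfloor(d-1)/2\rfloor$ keeps both terms inside the range where \eqref{eq:delta-d-u1} is valid. Writing out the factorials gives
\[
\frac{\omega(d,i+1)}{\omega(d,i)}
=\frac{(d-i-1)(d-i+1)(d-2i-1)(d-2i-2)}{(d-i-2)(d-i)\,i\,(i+2)}.
\]
All factors are positive for $d\geq 2i+3$. Therefore $\delta(d,i)>0$ exactly when
\[
N(d,i):=(d-i-1)(d-i+1)(d-2i-1)(d-2i-2)-i(i+2)(d-i-2)(d-i)>0 .
\]

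I will reduce this to a single cubic by substituting $d=3i+2+x$, with $x$ an integer. The admissible range $d\geq 2i+3$ becomes $x\geq 1-i$. The dichotomy to prove is then $N>0$ for $x\geq0$ and $N<0$ for $x\leq -1$.

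\begin{itemize}[leftmargin=*]
\item \textbf{The case $x=0$, i.e.\ $d=3i+2$.} Here the factors are $(2i+1)(2i+3)(i+1)i$ against $i(i+2)\cdot i(2i+2)$. Dividing by $i(i+1)$ gives $(2i+1)(2i+3)-2i(i+2)=2i^2+4i+3>0$.
\item \textbf{The case $x=-1$, i.e.\ $d=3i+1$.} The comparison becomes $(2i)(2i+2)(i)(i-1)$ against $i(i+2)(2i-1)(2i+1)$. Dividing by $i$, I compare $4i(i+1)(i-1)=4i^3-4i$ with $(i+2)(4i^2-1)=4i^3+8i^2-i-2$. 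The latter is larger for all $i\geq1$, so $N<0$.
\end{itemize}

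Monotonicity in $d$ then propagates both signs.

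\begin{itemize}[leftmargin=*]
\item \textbf{For $d\geq 3i+2$.} The ratio is a product of factors each increasing in $d$. Indeed, $(d-i-1)/(d-i-2)$ and $(d-i+1)/(d-i)$ are decreasing, but their product with $(d-2i-1)(d-2i-2)$ is not obviously monotone. So instead I will write the ratio as
\[
\frac{(d-2i-1)(d-2i-2)}{i(i+2)}\cdot\Bigl(1+\frac{1}{d-i-2}\Bigr)\Bigl(1+\frac1{d-i}\Bigr).
\]
The second factor is at least $1$, and the first factor is at least $(i+1)i/(i(i+2))$ at $d=3i+2$. That alone is not enough, so I will use the direct $x=0$ computation above. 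For $x\geq1$ the first factor alone gives $(i+2)(i+1)/(i(i+2))>1$, since the first factor is increasing in $d$.
\item \textbf{For $2i+3\leq d\leq 3i+1$.} I need the ratio to be below $1$. I will bound the ratio by
\[
\frac{(d-2i-1)(d-2i-2)}{i(i+2)}\cdot\frac{(d-i-1)(d-i+1)}{(d-i-2)(d-i)}.
\]
Here the second factor is decreasing in $d$ and the first is increasing, so monotonicity is unclear. The fallback is to treat $N(3i+2+x,i)$ as a cubic in $x$ on $1-i\leq x\leq -1$ and show that it is negative there. Expanding gives leading term $x^4$ from the first product minus $i(i+2)x^2$-type terms. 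I would verify negativity by checking the endpoint $x=1-i$, i.e.\ $d=2i+3$, where the ratio is $\frac{(i+2)(i+4)\cdot2\cdot1}{(i+1)(i+3)i(i+2)}<1$ for $i\geq2$, together with the endpoint $x=-1$ and convexity in $x$. A quartic with positive leading coefficient that is negative at both endpoints of an interval is not automatically negative between them, but a convex function is. So I would show $\partial_x^2N\geq0$ on the range, or else split $N$ into a convex part plus a nonpositive remainder.
\end{itemize}

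This last step is the main obstacle. The case $i=1$ forces $d\leq 3i+1=4$, which lies outside $d\geq5$, so the endpoint case $i\geq2$ suffices. Finally, since the two regions $d\geq3i+2$ and $d\leq3i+1$ exhaust the integers, $\delta(d,i)$ never vanishes.
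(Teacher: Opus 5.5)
\textbf{Gap: the decreasing range $2i+3\leq d\leq3i+1$ is not proved.} Your argument for $d\geq3i+2$ is fine: the direct check at $d=3i+2$, plus $(d-2i-1)(d-2i-2)/(i(i+2))\geq(i+1)/i$ for $d\geq3i+3$. There is one harmless slip. At $d=3i+2$ one has $(d-i-2)(d-i)=2i(2i+2)$, so after dividing by $i(i+1)$ you compare $(2i+1)(2i+3)$ with $4i(i+2)$, and the margin is $3$, not $2i^2+4i+3$. (Also, $N(3i+2+x,i)$ is a quartic in $x$, not a cubic.)

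On the decreasing range you only check the endpoints $d=2i+3$ and $d=3i+1$. You then say you \emph{would} prove convexity, and you flag this yourself as the main obstacle. For $i\geq4$ there are interior values $2i+4\leq d\leq3i$ that nothing in the proposal covers. So the negative half of the lemma, and with it the ``never vanishes'' claim, is not established.

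\textbf{The missing idea is a better grouping of the linear factors.} Pair $d-2i-1$ with $d-i$, and $d-2i-2$ with $d-i-2$, instead of pairing the $(d-i\pm\cdot)$ factors with each other:
\[
\frac{\omega(d,i+1)}{\omega(d,i)}=\frac{(d-i-1)(d-i+1)}{i(i+2)}\Bigl(1-\frac{i+1}{d-i}\Bigr)\Bigl(1-\frac{i}{d-i-2}\Bigr).
\]
Every factor is positive and strictly increasing in $d$ for $d\geq2i+3$. Hence the ratio is strictly increasing in $d$, and your two computations at $d=3i+1$ and $d=3i+2$ settle both ranges at once. This is exactly the paper's proof. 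It also makes your separate argument for $d\geq3i+3$ and the endpoint check at $d=2i+3$ unnecessary.

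\textbf{Your convexity plan also works, but the verification is missing as written.} Put $A=(d-i)^2-1$ and $B=(d-2i-1)(d-2i-2)$, so that $N=AB-i(i+2)\bigl((d-i-1)^2-1\bigr)$. Then
\[
\partial_d^2N=2B+4(d-i)(2d-4i-3)+2\bigl((d-i)^2-(i+1)^2\bigr),
\]
and every term is positive for $d\geq2i+3$. Moreover $N(2i+3,i)<0$, because the ratio there equals $2(i+4)/(i(i+1)(i+3))<1$ for $i\geq2$. Convexity then forces $N<0$ between the two negative endpoints.
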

\begin{proof}
Put $j=d-i$. Formula~\eqref{eq:delta-d-u1} gives
\begin{equation}\label{eq:omega-ratio}
\frac{\omega(d,i+1)}{\omega(d,i)}
=\frac{j^2-1}{i(i+2)}
\left(1-\frac{i+1}{j}\right)
\left(1-\frac{i}{j-2}\right).
\end{equation}
For fixed $i$, each factor is positive and strictly increasing for $j\geq i+3$.
At $d=3i+2$, the ratio equals
\[
\frac{(2i+1)(2i+3)}{4i(i+2)}=1+\frac3{4i(i+2)}>1.
\]
The admissible range $2i+3\leq d\leq3i+1$ is nonempty only when
$i\geq2$. At its upper endpoint, the ratio is
\[
\frac{4i(i^2-1)}{(i+2)(4i^2-1)}
=1-\frac{8i^2+3i-2}{(i+2)(4i^2-1)}<1.
\]
Monotonicity and the integrality of $d$ give the stated signs.
\end{proof}

For the rational recurrences below, it is useful to record
\begin{equation}\label{eq:delta-quartic}
\delta(d,i)=\frac{2d!\,Q(d,i)}
{i!(i+2)!(d-2i-1)!(d-i-2)(d-i-1)(d-i)(d-i+1)},
\end{equation}
where the quartic has the compact form
\begin{equation}\label{eq:quartic-Q}
\begin{aligned}
Q(d,i)={}&(d-i-1)(d-i+1)(d-2i-1)(d-2i-2)\\
&-i(i+2)(d-i-2)(d-i).
\end{aligned}
\end{equation}
These identities follow by subtracting $1$ from the ratio in
\eqref{eq:omega-ratio} and multiplying by $\omega(d,i)$.
All denominator factors are positive on the domain of $\delta$.

\subsection{Normalized differences}\label{subsec:b-rec}
For $m\geq1$, $d\geq5$, and $1\leq i\leq\lfloor(d-3)/2\rfloor$,
define
\begin{equation}\label{eq:b-def}
b(m,d,i)=\frac{\Delta_i u(m,d,i)}
{\lambda_{\max}(m,d)\delta(d,i)}.
\end{equation}
Taking consecutive differences in \eqref{eq:a-coeff} gives
\begin{equation}\label{eq:b-factor}
\Delta_i a(m,d,i)=\delta(d,i)
\bigl(\lambda_{\max}(m,d)b(m,d,i)-\lambda\bigr).
\end{equation}
Since $\lambda\leq\lambda_{\max}$, the inequality $b>1$ implies that
$\Delta_i a(m,d,i)$ has the same sign as $\delta(d,i)$.
The sharper bound in \eqref{eq:lambda-range} shows that the weaker comparison
\begin{equation}\label{eq:sharp-barrier}
b(m,d,i)>\theta_d(m),\qquad
\theta_d(m)=\frac{m+d+2}{2(\max\{m,d\}+1)}
\end{equation}
also suffices. Notice that $0<\theta_d(m)\leq1$.
To obtain recurrences for $b$, we eliminate $u(m,d,i)$ from the uniform
coefficient recurrences.

\begin{prop}\label{prop:b-rec}
Let $m\geq1$, $d\geq5$, and
$1\leq i\leq\lfloor(d-3)/2\rfloor$. Then
\begin{align}
b(m,d+1,i)&=Q_1(m,d,i)b(m,d,i)+R_1(m,d,i),
\label{eq:b-rec-d}\\
b(m+1,d,i)&=Q_3(m,d,i)b(m,d,i)+R_3(m,d,i).
\label{eq:b-rec-m}
\end{align}
If also $d\geq7$ and $1\leq i\leq\lfloor(d-5)/2\rfloor$, then
\begin{equation}\label{eq:b-rec-i}
b(m,d,i+1)=Q_2(m,d,i)b(m,d,i)+R_2(m,d,i).
\end{equation}
The rational functions $Q_j,R_j$ are given in
Appendix~\ref{app:b-rec-data}. On its stated domain, $Q_1(m,d,i)$ is
negative if and only if $d=3i+1$, and is positive otherwise.
On the domain of \eqref{eq:b-rec-i}, $Q_2(m,d,i)$ is positive if and only if
$3i+2\leq d\leq3i+4$, and is negative otherwise. Finally,
\begin{equation}\label{eq:Q3-positive}
Q_3(m,d,i)=\frac{(m+1)(d+m+3)}{(d+m+1)(d-i+m+1)}>0.
\end{equation}
\end{prop}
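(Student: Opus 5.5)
The plan is to derive each recurrence by eliminating the uniform coefficients from the linear recurrences of Lemma~\ref{lem:u-inhomogeneous-recurrences}, using the closed forms of $\omega$ and $\delta$. Throughout we write $\Delta_i u(m,d,i)=u(m,d,i+1)-u(m,d,i)$. Applying \eqref{eq:u-shift-i} expresses it as
\[
\Delta_i u(m,d,i)=-\frac{d+m+1}{i+1}\,u(m,d,i)+G(m,d,i),
\]
where $G$ is the explicit hypergeometric inhomogeneous term. Hence
\[
u(m,d,i)=\frac{i+1}{d+m+1}\bigl(G(m,d,i)-\lambda_{\max}(m,d)\,\delta(d,i)\,b(m,d,i)\bigr),
\]
so $u$ is an affine function of $b$ with explicit coefficients.

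\textbf{The recurrence in $d$.} We shift $d\mapsto d+1$ in $\Delta_i u$ and rewrite $u(m,d+1,i)$ and $u(m,d+1,i+1)$ through \eqref{eq:u-shift-d}, at indices $i$ and $i+1$; the index range $i+1\le\lfloor(d-1)/2\rfloor$ is exactly $i\le\lfloor(d-3)/2\rfloor$. This gives
\[
\Delta_i u(m,d+1,i)=\frac{d+m+1}{d-i+m}\,u(m,d,i+1)-\frac{d+m+1}{d-i+m+1}\,u(m,d,i)+(\text{explicit}).
\]
We then replace $u(m,d,i+1)$ by $u(m,d,i)+\Delta_i u(m,d,i)$ and substitute the affine expression for $u(m,d,i)$. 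The result is affine in $b(m,d,i)$. Dividing by $\lambda_{\max}(m,d+1)\,\delta(d+1,i)$ gives \eqref{eq:b-rec-d}. The quotients $\lambda_{\max}(m,d)/\lambda_{\max}(m,d+1)$ and $\delta(d,i)/\delta(d+1,i)$ are rational by \eqref{eq:lambda-max} and \eqref{eq:delta-quartic}, and all hypergeometric factors $\binom{d+m}{i}$, $(d+m)!$ and so on cancel to rational functions. Consequently $Q_1$ is, up to manifestly positive factors, $Q(d,i)/Q(d+1,i)$ multiplied by a linear factor such as $(d-i+m)^{-1}$ coming from the $u(m,d,i+1)$ coefficient.

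\textbf{The recurrences in $m$ and in $i$.} The $m$-recurrence is handled the same way using \eqref{eq:u-shift-m}, which has the same multiplier $(d+m+1)/(d-i+m+1)$ at both indices after shifting. There is, however, a slight mismatch between the multipliers at $i$ and at $i+1$, namely $(d-i+m+1)$ versus $(d-i+m)$. Because $\delta$ does not depend on $m$, we expect no $Q$-ratio in $Q_3$, and the explicit product \eqref{eq:Q3-positive} should emerge once the affine substitution is carried out; its positivity is then obvious. For \eqref{eq:b-rec-i} we apply \eqref{eq:u-shift-i} twice, at $i$ and at $i+1$, which requires $i+1\le\lfloor(d-3)/2\rfloor$. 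This expresses $u(m,d,i+2)$ and $u(m,d,i+1)$ affinely in $u(m,d,i)$, and hence in $b(m,d,i)$.

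\textbf{Sign analysis.} In each case the leading coefficient factors as (positive)$\times\,\delta(d,i)/\delta(\cdot)$ times a simple factor. For $Q_1$ we expect the sign to be that of $\delta(d,i)\delta(d+1,i)$. By Lemma~\ref{lem:delta-sign} this product is negative exactly when $d\le3i+1<d+1$, that is when $d=3i+1$, which is the claim. For $Q_2$ the sign should be that of $-\delta(d,i)\delta(d,i+1)$, since the factor $(i-d-m)/(i+1)$ is negative. The product $\delta(d,i)\delta(d,i+1)$ is negative exactly when $3i+2\le d\le3i+4$, again by Lemma~\ref{lem:delta-sign}, so $Q_2$ is positive exactly in that range.

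\textbf{Main obstacle.} The hardest step is the bookkeeping that shows each residual multiplier is sign-definite, rather than, say, a polynomial in $m$ that could change sign. If an extra polynomial factor appears, I would verify its positivity on the domain directly, checking the leading coefficients in $m$ and the boundary values. The identities themselves are finite rational simplifications and can be checked symbolically, with the explicit $Q_j,R_j$ recorded in Appendix~\ref{app:b-rec-data}.
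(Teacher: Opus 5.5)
Your proposal is correct and is essentially the paper's proof: eliminate $u(m,d,i)$ through $\Delta_i u=-\frac{n+1}{i+1}u+V$ with $n=m+d$, divide by the shifted $\lambda_{\max}\delta$, and read the signs of $Q_1,Q_2$ from $\delta(d,i)/\delta(d+1,i)$ and $\delta(d,i)/\delta(d,i+1)$ via Lemma~\ref{lem:delta-sign}. The only difference is the order of substitution: the paper applies \eqref{eq:u-shift-i} at the shifted point and the $d$- or $m$-shift only at index $i$, so the coefficient of $\Delta_i u(m,d,i)$ is directly $\frac{n+2}{n-i+1}$, while in your ordering it appears as $\frac{n+1}{n-i}-\frac{i+1}{(n-i)(n-i+1)}$, which is the same because $(n+1)(n-i+1)-(i+1)=(n+2)(n-i)$ --- so your ``main obstacle'' disappears, $Q_1$ carries $(n-i+1)^{-1}$ rather than $(d-i+m)^{-1}$, and for $Q_2$ the negative coefficient is $-\frac{n-i}{i+2}$ (a product of three negative factors) rather than $\frac{i-n}{i+1}$ alone.
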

\begin{proof}
Write $n=m+d$, $D(m,d,i)=\Delta_i u(m,d,i)$, and let $V(m,d,i)$ be
the inhomogeneous term in \eqref{eq:u-shift-i}. Thus
\begin{equation}\label{eq:D-elimination}
D(m,d,i)=-\frac{n+1}{i+1}u(m,d,i)+V(m,d,i).
\end{equation}
Let $E_m$ and $E_d$ denote the inhomogeneous terms in
\eqref{eq:u-shift-m} and \eqref{eq:u-shift-d}, respectively.
Both recurrences have coefficient $(n+1)/(n-i+1)$.
Eliminating $u$ by \eqref{eq:D-elimination} gives, for either the $m$-shift
or the $d$-shift,
\begin{equation}\label{eq:D-shift}
\begin{aligned}
D_{\mathrm{new}}
&=\frac{n+2}{n-i+1}D+V_{\mathrm{new}}
-\frac{n+2}{n-i+1}V-\frac{n+2}{i+1}E,
\end{aligned}
\end{equation}
where $E=E_m$ or $E_d$, as appropriate.
For the shift in $i$, the same elimination gives
\begin{equation}\label{eq:D-shift-i}
D(m,d,i+1)=-\frac{n-i}{i+2}D(m,d,i)
+V(m,d,i+1)-\frac{i+1}{i+2}V(m,d,i).
\end{equation}
Divide these identities by the shifted values of
$\lambda_{\max}\delta$. Cancellation of factorials yields the expressions
in Appendix~\ref{app:b-rec-data} and hence all three recurrences.

Every shifted index stays in the domain of $b$. Lemma~\ref{lem:delta-sign}
shows that the quartic factors are nonzero, while all remaining denominator
factors are positive. Thus each division is valid.
The quartic $Q(d,i)$ has the same sign as $\delta(d,i)$ by
\eqref{eq:delta-quartic}. In \eqref{eq:Q1-compact} and \eqref{eq:Q2-compact},
all factors other than the quartics and the explicit minus sign are positive.
By Lemma~\ref{lem:delta-sign}, $Q(d,i)$ and $Q(d+1,i)$ have opposite signs
exactly when $d=3i+1$, whereas $Q(d,i)$ and $Q(d,i+1)$ have opposite signs
exactly when $3i+2\leq d\leq3i+4$. These comparisons give the stated signs
of $Q_1$ and $Q_2$. Finally, \eqref{eq:Q3-positive} gives $Q_3>0$.
\end{proof}

\section{Proof of unimodality}\label{sec:unimodality}
Throughout this section, $m\geq2$ and $d\geq5$.
We will determine the signs of all consecutive differences except the one
at $i=(d-2)/3$ when $d\equiv2\pmod3$. By \eqref{eq:b-factor}, away from
$d=3i+2$ the required signs follow mainly from $b>1$.
On $d=3i+1$ we compare $b$ with $\theta_d(m)$, and the case
$m=2$, $d=3i+3$ is handled directly in the proof of
Theorem~\ref{thm:first-difference-sign}. The line $d=3i+2$ is treated
in Section~\ref{sec:mode}.

\subsection{Coranks two and three}
For $m=2,3$, the coefficient formula is a sum of only two or three terms.
We use the resulting rational expressions to start the induction on $m$.

\begin{lem}\label{lem:b-base-m2}
For $d\geq5$ and $1\leq i\leq\lfloor(d-3)/2\rfloor$, we have
$b(2,d,i)>1$ unless $d=3i+2$ or $d=3i+3$. On these two lines,
\[
 b(2,3i+2,i)<0,\qquad 0<b(2,3i+3,i)<1.
\]
\end{lem}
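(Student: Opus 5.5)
We make $b(2,d,i)$ explicit and reduce every claim to a polynomial sign question.

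For $m=2$ the defining sum \eqref{eq:u-def-sum} has only the summands $h=0,1$:
\[
u(2,d,i)=\frac{\binom{d+2}{i}}{d-i}\Bigl(\binom{d-i}{i+1}+i\binom{d-i+1}{i+2}\Bigr).
\]
We compute $\Delta_i u(2,d,i)$ from this, or more quickly from \eqref{eq:u-shift-i} with $m=2$. Both $u(2,d,i+1)$ and $u(2,d,i)$ are rational multiples of the common factorial block $d!/\bigl(i!(i+2)!(d-2i-1)!\bigr)$. The same block appears in $\delta(d,i)$ by \eqref{eq:delta-quartic}, and $\lambda_{\max}(2,d)=\frac{2}{d+4}\binom{d+2}{2}=(d+1)(d+2)/(d+4)$. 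Dividing, we expect
\[
b(2,d,i)=\frac{N(d,i)}{C(d,i)\,Q(d,i)},
\]
with $N$ an explicit polynomial of low degree and $C$ a product of linear factors. The factors of $C$ are $d-i\pm1$, $d-i$, $d-i-2$, $d-2i-1$, $d+1$, $d+2$ and similar. Each is positive on $d\geq 2i+3$, $i\geq1$.

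By Lemma~\ref{lem:delta-sign}, $Q(d,i)>0$ exactly when $d\geq 3i+2$. The sign of $b$ is therefore governed by $N$ together with this sign of $Q$. The claims become:
\begin{itemize}[itemsep=1pt]
\item $N(d,i)-C(d,i)Q(d,i)$ has the same sign as $Q$ off the two lines $d=3i+2$ and $d=3i+3$, which is the statement $b>1$;
\item on $d=3i+2$, $N<0$;
\item on $d=3i+3$, $0<N<CQ$.
\end{itemize}

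For the two lines we substitute $d=3i+2$ and $d=3i+3$. Each quantity becomes a univariate polynomial in $i$, and its sign for $i\geq1$ can be read off after expanding. We check the small values of $i$ directly and show the leading coefficient dominates via a shift $i=1+k$ with nonnegative coefficients.

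The generic region uses the substitution $d=3i+2+s$, and splits into two ranges.
\begin{itemize}[itemsep=1pt]
\item For $s\geq2$, write $s=2+t$ with $t\geq0$ and $i\geq1$. Expand $N-CQ$ as a polynomial in $(t,i-1)$. The goal is to show that all coefficients are nonnegative and that at least one positive term always survives. This gives $b>1$.
\item For $s\leq -1$, that is $2i+3\leq d\leq 3i+1$, write $d=2i+3+r$ with $0\leq r\leq i-2$. Then set $i=r+2+k$ with $k\geq0$ and show that $CQ-N$ has nonnegative coefficients in $(r,k)$, with some positive term. Here $Q<0$, so $b>1$ means $N<CQ$.
\end{itemize}

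The main obstacle is the $s\leq -1$ range, near $d=3i+1$. There $Q$ is small relative to its generic size and the margin in $b>1$ is thinnest. If positivity of the coefficients fails in the chosen variables, I would split off the line $d=3i+1$ and treat it as a separate univariate polynomial in $i$. The remaining range $d\leq 3i$ would then be handled with the shifted variables.

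We also double-check the domain boundary $d=2i+3$. There $d-2i-1=2$, and all the factors of $C$ must remain nonzero. If the expansion has a few negative coefficients, I would absorb them by AM--GM against neighbouring positive terms, or handle the finitely many small $(d,i)$ by direct evaluation using Lemma~\ref{lem:u-fixed-d}.
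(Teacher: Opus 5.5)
Your plan is correct and is essentially the paper's proof. The paper writes $b(2,d,i)-1=N_2(d,i)/\bigl(2(i+1)(i+3)(d-i+2)Q(d,i)\bigr)$ and certifies the two generic ranges by nonnegative-coefficient expansions in the same orthant coordinates: your $(t,i-1)$ for $d\ge 3i+4$, and $x=3i-d+1$, $y=d-2i-2$ (your $k$ and $r+1$) for $2i+3\le d\le 3i+1$. It then factors $b$ explicitly on the lines $d=3i+2$ and $d=3i+3$. The certificate covers the whole lower range at once, so your fallback of splitting off $d=3i+1$ is not needed.
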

\begin{proof}
Write
\[
 b(2,d,i)-1=\frac{N_2(d,i)}
 {2(i+1)(i+3)(d-i+2)Q(d,i)}.
\]
For $d\geq3i+4$, put $x=i-1$ and $y=d-3i-4$.
The polynomial $N_2(3x+y+7,x+1)$ has nonnegative coefficients and a
positive constant term; its coefficient data are summarized in
Appendix~\ref{app:positivity-certificates}. For $2i+3\leq d\leq3i+1$,
put $x=3i-d+1$ and $y=d-2i-2$, so that
\[
    i=x+y+1,\qquad d=2x+3y+4.
\]
Here $x,y\geq0$, and $-N_2(2x+3y+4,x+y+1)$ also has nonnegative
coefficients and positive constant term. Lemma~\ref{lem:delta-sign} gives
$Q>0$ in the first range and $Q<0$ in the second. Thus $b(2,d,i)-1>0$
in both ranges.

On the line $d=3i+2$, direct factorization gives
\[
 b(2,3i+2,i)
 =
 -\frac{2i^3+4i^2+3i+3}{(i+1)(i+3)}<0.
\]
On the other exceptional line, $d=3i+3$, substitution gives
\[
b\!\left(2,d,\frac{d-3}{3}\right)=
\frac{3(d+4)(d+9)(2d-3)(2d+3)}{(d+6)(2d+9)(8d^2+3d-9)},
\]
and
\[
1-b\!\left(2,d,\frac{d-3}{3}\right)
=
\frac{2(d+3)(2d^3+3d^2+27d+81)}
{(d+6)(2d+9)(8d^2+3d-9)}>0.
\]
\end{proof}

\begin{lem}\label{lem:b-base-m3}
For $d\geq5$ and $1\leq i\leq\lfloor(d-3)/2\rfloor$, we have
$b(3,d,i)>1$ unless $d=3i+2$. On that line, $b(3,3i+2,i)<0$.
\end{lem}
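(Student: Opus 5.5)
We follow the proof of Lemma~\ref{lem:b-base-m2}, now with three summands. For $m=3$ the sum \eqref{eq:u-def-sum} has only the terms $h=0,1,2$. Each $v_h(3,d,i)$ is a common factorial expression times a rational function of $d,i$, so $u(3,d,i)$ and $u(3,d,i+1)$ have closed forms with a common factorial part. The same holds for $\lambda_{\max}(3,d)\delta(d,i)$, using \eqref{eq:delta-quartic} and \eqref{eq:lambda-max}. Cancelling factorials, we will write
\[
b(3,d,i)-1=\frac{N_3(d,i)}{P_3(d,i)\,Q(d,i)},
\]
where $P_3$ is a product of linear factors that is positive on the domain $d\geq 2i+3$, $i\geq1$, and $N_3$ is a polynomial. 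Alternatively, $b(3,d,i)=Q_3(2,d,i)b(2,d,i)+R_3(2,d,i)$ by \eqref{eq:b-rec-m}, which gives the same rational expression from the $m=2$ data. We will use this as a check.

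The sign analysis then splits along the lines from Lemma~\ref{lem:delta-sign}, exactly as before. In the range $d\geq 3i+3$ (note that, unlike $m=2$, the line $d=3i+3$ is now included), we substitute $x=i-1$, $y=d-3i-3$. We aim to show that $N_3(3x+y+6,x+1)$ has nonnegative coefficients and a positive constant term. Since $Q>0$ there, this gives $b>1$. If the full range does not give a coefficientwise positive polynomial, we will split off the line $y=0$ and treat it as a one-variable polynomial in $i$, using the substitution $y=d-3i-4$ for the rest, as in Lemma~\ref{lem:b-base-m2}. In the range $2i+3\leq d\leq 3i+1$ we use $i=x+y+1$, $d=2x+3y+4$ and show that $-N_3$ has nonnegative coefficients and a positive constant term. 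Since $Q<0$ there, again $b>1$. On the line $d=3i+2$ we substitute directly and factor. The expectation is that $b(3,3i+2,i)=-A(i)/B(i)$ with $A,B$ having positive coefficients. This is the analogue of the $m=2$ formula, reflecting that $\Delta_i u$ is negative while $\delta$ is positive there (or vice versa).

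The main obstacle is the positivity certificate near the boundary line $d=3i+3$ in the first range. For $m=2$ that line genuinely failed ($b<1$), so for $m=3$ the margin there is smallest, and the substituted polynomial may have a few negative coefficients. If that happens, we will first try to absorb the negative monomials into neighbouring positive ones via AM--GM-type bounds, or use a finite check for small $i$. Failing that, we will use the recurrence route: the factor $Q_3(2,d,3i+3\text{-line})$ together with the explicit $b(2,3i+3,i)$ from Lemma~\ref{lem:b-base-m2} lets us reduce $b(3,3i+3,i)-1$ to an explicit one-variable rational function in $d$ whose positivity can be read off after factoring.
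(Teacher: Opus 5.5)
Your plan is correct and is essentially the paper's proof. The paper writes $b(3,d,i)-1=N_3(d,i)/\bigl(4(i+1)(i+3)(i+4)(d-i+2)(d-i+3)Q(d,i)\bigr)$ and uses exactly your substitutions $N_3(3x+y+6,x+1)$ and $-N_3(2x+3y+4,x+y+1)$, both of which are coefficientwise positive, so none of your fallbacks for the line $d=3i+3$ are needed. On $d=3i+2$ it factors $b(3,3i+2,i)=-(3i+7)(2i^4+5i^3+4i^2-2i+6)/\bigl((i+1)(i+3)(i+4)(2i+5)\bigr)$, which is the negative closed form you anticipated.
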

\begin{proof}
The same calculation gives
\[
 b(3,d,i)-1=
\frac{N_3(d,i)}
{4(i+1)(i+3)(i+4)(d-i+2)(d-i+3)Q(d,i)}.
\]
For $d\geq3i+3$, substitute $i=x+1$ and $d=3x+y+6$.
For $2i+3\leq d\leq3i+1$, use $i=x+y+1$ and $d=2x+3y+4$.
In both cases the new variables are nonnegative. The respective
polynomials $N_3$ and $-N_3$ have nonnegative coefficients and positive
constant term, as recorded in Appendix~\ref{app:positivity-certificates}.
The signs of $Q$ then give $b(3,d,i)>1$.

On the remaining line $d=3i+2$,
\[
b(3,3i+2,i)
=-\frac{(3i+7)(2i^4+5i^3+4i^2-2i+6)}
{(i+1)(i+3)(i+4)(2i+5)}.
\]
For $i=z+1$, the quartic factor in the numerator becomes
\[
2z^4+13z^3+31z^2+29z+15,
\]
which is positive for $z\geq0$.  Hence $b(3,3i+2,i)<0$, as
required.
\end{proof}

\subsection{Induction on the corank}
To extend these estimates to larger coranks, we use the elementary identity
\[
x'-1=Q(x-1)+(Q+R-1),\qquad x'=Qx+R.
\]
In particular, $x>1$ implies $x'>1$ whenever $Q>0$ and $Q+R>1$.
The next lemma supplies the latter inequality for the recurrence in $m$.

\begin{lem}\label{lem:m-step}
The inequality $Q_3(m,d,i)+R_3(m,d,i)>1$ holds in the following ranges:
\begin{align*}
&m\geq3,\quad i\geq2,\quad d\geq3i+3;\\
&m\geq3,\quad i\geq3,\quad 2i+3\leq d\leq3i;\\
&i\geq1,\quad d=3i+2,\quad m\geq i+3.
\end{align*}
\end{lem}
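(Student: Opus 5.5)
We would first make $R_3$ explicit rather than working from the appendix. Write $\Lambda(m,d)=\lambda_{\max}(m,d)$. By the $m$-shift in \eqref{eq:D-shift},
\[
b(m+1,d,i)=\frac{n+2}{n-i+1}\,\frac{\Lambda(m,d)}{\Lambda(m+1,d)}\,b(m,d,i)+\frac{W(m,d,i)}{\Lambda(m+1,d)\,\delta(d,i)},
\]
where $W=V(m+1,d,i)-\frac{n+2}{n-i+1}V(m,d,i)-\frac{n+2}{i+1}E_m$. Here $\Lambda(m+1,d)/\Lambda(m,d)=(n+1)(n+2)/((m+1)(n+3))$, which gives \eqref{eq:Q3-positive}. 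The quantities $V$ and $E_m$ are each a rational function of $(m,d,i)$ times the common hypergeometric factor $\binom{i+m-1}{m}\binom{d+m}{i}\binom{d-i+m}{i+m+1}$, up to simple shift ratios. Dividing by $\Lambda(m+1,d)\delta(d,i)$ with \eqref{eq:delta-quartic} therefore expresses $R_3$ as
\[
R_3(m,d,i)=H(m,d,i)\,\frac{S(m,d,i)}{Q(d,i)},
\]
with $S$ a polynomial and $H>0$ a ratio of factorials. After this reduction, the inequality $Q_3+R_3>1$ is equivalent to
\[
R_3>1-Q_3=\frac{(d+m+1)(d-i+m+1)-(m+1)(d+m+3)}{(d+m+1)(d-i+m+1)}.
\]

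The first thing to check is the size of $H$. It should behave like $\binom{i+m-1}{m}\binom{d-i+m}{i+m+1}\big/\bigl(\binom{d+m}{m}\cdot d!/((i-1)!(i+1)!(d-2i-1)!)\bigr)$, which is not polynomially bounded in $m$. Near the line $d=3i+2$ in particular, $R_3$ can be large and positive while $Q_3$ is small. So we would not aim for a single polynomial inequality. Instead we would take ratios in $m$. Let $T(m)=R_3(m,d,i)/(1-Q_3(m,d,i))$. We would show $T(m_0)>1$ at the smallest $m$ in each range, namely $m_0=3$ or $m_0=i+3$, and then show that $T(m+1)/T(m)\geq1$ in each range. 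Both $T(m+1)/T(m)$ and $T(m_0)$ are rational functions once the factorials cancel. The sign of $Q(d,i)$ is fixed on each range by Lemma~\ref{lem:delta-sign}: positive for $d\geq3i+2$ and negative for $d\leq3i+1$. We expect $S$ to take the same sign there, so that $R_3>0$. Whether that holds is exactly what the base-case computation has to confirm.

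For each resulting polynomial inequality we would use the same substitution technique as in Lemmas~\ref{lem:b-base-m2} and~\ref{lem:b-base-m3}, making all variables nonnegative:
\begin{itemize}
\item In the range $d\geq3i+3$, put $m=x+3$, $i=z+2$, $d=3z+y+9$.
\item In the range $2i+3\leq d\leq3i$, put $m=x+3$, $i=x'+y+3$, $d=2x'+3y+9$.
\item On the line $d=3i+2$, put $i=z+1$, $m=z+x+4$.
\end{itemize}
In each case we would verify, by expanding, that the numerator has nonnegative coefficients and a positive constant term. The stated lower bounds $i\geq2$, $i\geq3$ and $m\geq i+3$ presumably mark exactly where a negative coefficient or a small-case failure would otherwise occur. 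Any finitely many residual small cases would be checked numerically.

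The main obstacle will be the line $d=3i+2$. There $Q(d,i)$ is positive but small, since $\delta$ changes sign just below this line. The sign of $S$ also depends on how $m$ compares with $i$, and this is why the hypothesis $m\geq i+3$ appears. Our first attempt would be to substitute $d=3i+2$ exactly, factor $S$, and show that the factor depending on $m-i$ is positive once $m-i\geq3$. If the monotone-ratio approach fails there, we would instead compare $R_3$ directly with the explicit bound $(d-i)/(d+m+1)$, which is larger than $1-Q_3$.
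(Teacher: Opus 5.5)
\textbf{There is a genuine gap, and it starts from a wrong structural premise.}

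You assume that $R_3$ contains an uncancelled ratio of factorials that is not polynomially bounded in $m$. This is false. The hypergeometric factor $\binom{i+m-1}{m}\binom{d+m}{i}\binom{d-i+m}{i+m+1}$ carried by $V$ and $E_m$ cancels exactly against $\lambda_{\max}(m,d)\,\delta(d,i)$. Your growth estimate drops the factor $\binom{d+m}{i}$. In any case, for fixed $d,i$ each of these binomials is a polynomial in $m$.

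Consequently $V/(\lambda_{\max}\delta)$ and the $E_m$-term are rational in $(m,d,i)$ jointly. These are the functions $H$ and $K_m$ of Appendix~\ref{app:b-rec-data}; $H$ grows only linearly in $m$, and $R_3=O(1/m)$. It follows that
\[
Q_3+R_3-1=\frac{N(m,d,i)}{4(i+m)(i+m+1)(i+m+2)(n+1)(n-i+1)Q(d,i)}
\]
with $N$ a polynomial. The paper's proof is just this single polynomial inequality, using exactly the substitutions you list:
\begin{itemize}
\item in the first range, $N$ has nonnegative coefficients and positive constant term;
\item in the second range, $-N$ has the same property;
\item on $d=3i+2$, with $x=m-i-3$, the numerator is a quartic in $x$ with coefficients $c_j(i)>0$ over a positive denominator.
\end{itemize}

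\textbf{The monotone-ratio step you put in its place is false.} We have $1-Q_3=\frac{(n+1)(d-i)-2(m+1)}{(n+1)(n-i+1)}\sim\frac{d-i-2}{m}$ and $Q_3+R_3-1=O(1/m)$. So your $T(m)=R_3/(1-Q_3)$ converges to a finite limit as $m\to\infty$, and it is not monotone.
\begin{itemize}
\item At $(d,i)=(9,3)$, in the second range, $T(3)=303/203\approx1.493$ but $T(4)=4790/3219\approx1.488$, and $T(m)\to63/58$.
\item At $(d,i)=(9,2)$, $T(3)\approx1.387$ and $T(10)\approx1.599$, while $T(m)\to54/37\approx1.459$.
\end{itemize}
Hence $T(m+1)/T(m)\geq1$ fails, already at the first step in the second range. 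The induction cannot be run, even though $T>1$ does hold.

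\textbf{Your fallback on $d=3i+2$ is also incorrect.} One has
\[
1-Q_3-\frac{d-i}{n+1}=\frac{i(d-i)-2(m+1)}{(n+1)(n-i+1)},
\]
which is positive on that line whenever $m<i^2+i-1$. For example, at $i=3$, $m=6$ we get $1-Q_3=13/27>4/9$. So $(d-i)/(d+m+1)$ is not an upper bound for $1-Q_3$ there.

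\textbf{Repair.} Discard $T$ and apply your substitutions directly to the numerator of $Q_3+R_3-1$; that is the paper's proof.
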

\begin{proof}
For the first two ranges, write
\begin{equation}\label{eq:m-step-numerator}
Q_3+R_3-1=
\frac{N(m,d,i)}{4(i+m)(i+m+1)(i+m+2)(m+d+1)(m+d-i+1)Q(d,i)}.
\end{equation}
The compact recurrence formulas in Appendix~\ref{app:b-rec-data} define
$N$ explicitly and show that it is a polynomial.
In the first range, substitute
\[
m=a+3,\qquad i=b+2,\qquad d=3b+9+c.
\]
In the second range, substitute
\[
m=a+3,\qquad i=b+c+3,\qquad d=2b+3c+9.
\]
In both cases $a,b,c\geq0$. The corresponding polynomials $N$ and $-N$
have nonnegative coefficients and a positive constant term, as recorded in
Appendix~\ref{app:positivity-certificates}. The sign of $Q$ is positive
in the first range and negative in the second by
Lemma~\ref{lem:delta-sign}. Thus $Q_3+R_3>1$ in both ranges.

For the third range, put $x=m-i-3\geq0$. Evaluating $Q_3,R_3$ at
$(m,3i+2,i)$ and simplifying \eqref{eq:m-step-numerator}, we obtain
\begin{equation}\label{eq:critical-m-step}
Q_3+R_3-1=\frac{\sum_{j=0}^4 c_j(i)x^j}
{3(2i+x+3)(2i+x+4)(2i+x+5)(3i+x+6)(4i+x+6)},
\end{equation}
where
\begin{align*}
c_0(i)&=4(i+2)^2(2i+3)(10i^3+35i^2+13i-3),\\
c_1(i)&=(i+2)(32i^5+420i^4+1396i^3+1519i^2+301i-78),\\
c_2(i)&=48i^5+432i^4+1204i^3+1154i^2+159i-54,\\
c_3(i)&=(i+2)(18i^3+71i^2+8i-3),\\
c_4(i)&=i^2(2i+7).
\end{align*}
$c_j(i)>0$ for $i\geq1$ and $0\leq j\leq4$, so $Q_3+R_3>1$ here as well.
\end{proof}

\begin{lem}\label{lem:b-inductive}
For $m\geq3$ and $i\geq2$, one has $b(m,d,i)>1$ whenever
\[
d\geq3i+3\qquad\text{or}\qquad 2i+3\leq d\leq3i.
\]
\end{lem}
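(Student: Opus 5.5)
We argue by induction on $m$, with base case $m=3$ and the recurrence \eqref{eq:b-rec-m} as the inductive step. Fix $i\geq2$ and $d$ with either $d\geq3i+3$ or $2i+3\leq d\leq3i$. Both ranges are independent of $m$, so the triple $(m+1,d,i)$ lies in the same range as $(m,d,i)$.

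For the base case we use Lemma~\ref{lem:b-base-m3}. It gives $b(3,d,i)>1$ for all admissible $(d,i)$ except on the line $d=3i+2$. Both of our ranges avoid that line, and they also avoid $d=3i+1$, so the base case holds on the whole region.

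For the inductive step, suppose $b(m,d,i)>1$ for some $m\geq3$. By \eqref{eq:b-rec-m},
\[
b(m+1,d,i)-1=Q_3(m,d,i)\bigl(b(m,d,i)-1\bigr)+\bigl(Q_3(m,d,i)+R_3(m,d,i)-1\bigr).
\]
By \eqref{eq:Q3-positive}, $Q_3>0$, so the first term is positive. The second term is positive by Lemma~\ref{lem:m-step}. Its first range ($m\geq3$, $i\geq2$, $d\geq3i+3$) covers the case $d\geq3i+3$ exactly. For $2i+3\leq d\leq3i$, however, Lemma~\ref{lem:m-step} requires $i\geq3$.

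So the value $i=2$ in the second range needs a separate look. When $i=2$ the conditions read $7\leq d\leq6$, which is impossible, so the case is vacuous. More generally, $2i+3\leq3i$ forces $i\geq3$, and the second range of the present lemma coincides with that of Lemma~\ref{lem:m-step}. The induction therefore closes and gives $b(m,d,i)>1$ for all $m\geq3$.

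We also need every triple used to lie in the domain of Proposition~\ref{prop:b-rec}, that is $d\geq5$ and $1\leq i\leq\lfloor(d-3)/2\rfloor$. Both ranges imply $d\geq2i+3$ and $d\geq7$, so this holds.

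The only delicate step is making sure the hypotheses of Lemma~\ref{lem:m-step} and of Lemma~\ref{lem:b-base-m3} match the present ranges exactly. Once the boundary case $i=2$ in the second range is seen to be empty, the argument reduces to the elementary identity $x'-1=Q(x-1)+(Q+R-1)$.
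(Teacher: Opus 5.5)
Your proof is correct and follows the paper's argument exactly: the base case $m=3$ comes from Lemma~\ref{lem:b-base-m3}, and $b>1$ is propagated in $m$ via \eqref{eq:b-rec-m}, $Q_3>0$, and Lemma~\ref{lem:m-step}. Your check that the second range is empty for $i=2$ is a detail the paper leaves implicit; it shows that the $i\geq3$ hypothesis of Lemma~\ref{lem:m-step} costs nothing.
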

\begin{proof}
Fix $(d,i)$. Lemma~\ref{lem:b-base-m3} gives the assertion at $m=3$.
Since $Q_3>0$, the recurrence \eqref{eq:b-rec-m} and
Lemma~\ref{lem:m-step} preserve $b>1$ as $m$ increases.
\end{proof}

\subsection{The boundary \texorpdfstring{$d=3i+1$}{d=3i+1}}
The estimate $b>1$ is unnecessarily strong on this line. We use the sharper
bound $\theta_d$ from \eqref{eq:sharp-barrier} instead.

\begin{lem}\label{lem:boundary-pairs}
For $i\geq2$, $d=3i+1$, and $m\geq2$, one has
\[
b(m,d,i)>\theta_d(m).
\]
\end{lem}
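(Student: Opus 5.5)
We prove the bound by induction on $m$ along the line $d=3i+1$, using the $m$-recurrence \eqref{eq:b-rec-m}. The barrier $\theta_d(m)$ now varies with $m$, so we cannot reuse $x'-1=Q(x-1)+(Q+R-1)$ with the constant $1$. We use instead
\[
b(m+1,d,i)-\theta_d(m+1)=Q_3\bigl(b(m,d,i)-\theta_d(m)\bigr)+\bigl(Q_3\theta_d(m)+R_3-\theta_d(m+1)\bigr).
\]
Since $Q_3>0$ by \eqref{eq:Q3-positive}, it suffices to check the bound at a base value of $m$ and to show that the bracketed term is positive for all subsequent $m$.

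Note that $\theta_d(m)=(m+d+2)/(2(d+1))$ for $m\leq d$ and $\theta_d(m)=(m+d+2)/(2(m+1))$ for $m\geq d$. The inductive term therefore has two rational forms, and the proof splits into the ranges $2\leq m\leq d-1$ and $m\geq d$.

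\emph{Base cases.} Lemmas~\ref{lem:b-base-m2} and~\ref{lem:b-base-m3} already give $b(2,3i+1,i)>1\geq\theta_d(2)$ and $b(3,3i+1,i)>1\geq\theta_d(3)$, since $3i+1$ is neither $3i+2$ nor $3i+3$. So we may start the induction at $m=3$, or at $m=2$ if that is convenient.

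\emph{Inductive term.} Substitute $d=3i+1$ into the compact formulas for $Q_3,R_3$ in Appendix~\ref{app:b-rec-data}; the quartic $Q(3i+1,i)$ is then an explicit negative polynomial in $i$. In the range $m\leq d-1$, put $m=3+a$ and $i=2+c$ with the constraint $a\leq 3c+3$. To handle that constraint, write $m=d-1-a'$ as a second parametrization if a single one fails. Then clear the denominators to get a polynomial $N_\theta(a,c)$, or $-N_\theta$ after accounting for the sign of $Q$, and verify that its coefficients are nonnegative with a positive constant term, as in the earlier certificates. In the range $m\geq d$, put $m=3i+1+a$ and do the same. The switch point $m=d$ itself, where $\theta_d(m)=\theta_d(m+1)$ up to the changed formula, is covered by the first range, since there the inductive term uses $\theta_d(m)$ and $\theta_d(m+1)$ with $m+1=d+1$. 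One could instead absorb it with a continuity check at a single value.

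\emph{Main obstacle.} The main obstacle is that $Q_3\theta_d(m)+R_3-\theta_d(m+1)$ may fail to be positive for small $m$ or small $i$, because $\theta_d$ is a comparatively large barrier (close to $1$) when $m\approx d$. If the certificate fails on a finite set, we would first try a weaker monotone intermediate barrier. Since $\theta_d(m)\leq1$ everywhere, we could also check whether $Q_3+R_3>1$ holds on $d=3i+1$ for large $m$, combined with $b>1$, to get the large-$m$ range without $\theta$ at all. Lemma~\ref{lem:m-step} omits exactly this line, which suggests $Q_3+R_3>1$ fails there, so the $\theta$-version is genuinely needed. Any remaining finitely many small pairs $(m,i)$ would be verified directly from the explicit formula \eqref{eq:u-def-sum}.
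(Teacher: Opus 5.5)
Your induction is the paper's. You start from $b(2,3i+1,i)>1\geq\theta_d(2)$ (Lemma~\ref{lem:b-base-m2}) and propagate with $Q_3>0$. You split into the branch $L_d(m)=(m+d+2)/(2(d+1))$ for the steps $2\leq m\leq d-1$ and the branch $T_d(m)=(m+d+2)/(2(m+1))$ for the steps $m\geq d$; the hand-off works because $L_d(d)=T_d(d)=1$. Your remark on the switch point is off, though. We have $\theta_d(d)=1>\theta_d(d+1)$, and the step $d\to d+1$ uses $T_d$ on both sides, so it lies in the second range, as in your own list.

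The real gap is that the whole content of the lemma, the positivity of the two residuals, is left unproved. Worse, the certificate you propose for the first branch cannot succeed. On $d=3i+1$ one has $Q(3i+1,i)=-i(8i^2+3i-2)$. The $L_d$-residual equals $(3i+m+4)P_i(m)/\bigl((3i+2)D(i,m)\bigr)$ with $D(i,m)>0$, where $P_i$ is a quartic in $m$ with leading coefficient $-2i(8i^2+3i-2)<0$. This residual tends to $-i/(d+1)$ as $m\to\infty$. So the inequality $Q_3L_d(m)+R_3>L_d(m+1)$ is genuinely false for large $m$, which is exactly why $\theta_d$ has to switch to $T_d$. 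Since $P_i$ is negative as $m\to+\infty$ and as $m\to-\infty$, any substitution that enforces only one of $m\geq2$, $m\leq3i$ yields a polynomial that is negative somewhere on the orthant. That includes your $m=3+a$ and $m=d-1-a'$ with the new variable left free. The failure is not confined to finitely many $(m,i)$, so checking small cases or weakening the barrier does not repair it.

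You need an argument that uses both ends of $[2,3i]$ at once. The paper (Lemma~\ref{lem:boundary-affine}) uses $p_1(i),p_2(i)>0>p_4(i)$. Then $P_i'(m)/m^2=p_1/m^2+2p_2/m+3p_3+4p_4m$ is strictly decreasing on $m>0$, from $+\infty$ to $-\infty$. Hence $P_i$ first increases and then decreases, its minimum on $[2,3i]$ is at an endpoint, and it suffices to check $P_i(2)>0$ and $P_i(3i)>0$ (the latter after putting $i=x+2$).

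On the $T_d$ branch your orthant plan does work. With $y=m-3i-1$, the residual is $(3i+m+4)S_i(m)/\bigl((m+2)D(i,m)\bigr)$, and $S_i(3i+1+y)=\sum_j s_j(i)y^j$ with every $s_j(i)>0$ for $i\geq2$. Both residuals are therefore positive for every $i\geq2$ on their full ranges. Contrary to your worry, no exceptional small cases arise, and the alternative route through $Q_3+R_3>1$ is never needed.
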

\begin{proof}
For $m=2$, Lemma~\ref{lem:b-base-m2} gives $b(2,d,i)>1$, whereas
$\theta_d(2)=(d+4)/(2d+2)<1$. Define the two rational branches
\[
L_d(m)=\frac{m+d+2}{2(d+1)},\qquad
T_d(m)=\frac{m+d+2}{2(m+1)}.
\]
They agree at $m=d$. The algebraic inequalities in
Lemma~\ref{lem:boundary-affine} are
\begin{align*}
Q_3L_d(m)+R_3&>L_d(m+1) &&(2\leq m\leq d-1),\\
Q_3T_d(m)+R_3&>T_d(m+1) &&(m\geq d).
\end{align*}
Because $Q_3>0$, the first inequality propagates $b>L_d$ from $m=2$ to
$m=d$, and the second propagates $b>T_d$ from there. These are exactly
the two branches of $\theta_d$, which proves the claim.
\end{proof}

\subsection{The first two nonconstant coefficients}\label{subsec:boundary-i1}
The comparison at $i=1$ admits a shorter argument directly from the
coefficient formula. The following proposition covers every rank needed
in this section.

\begin{prop}\label{prop:i1-large}
For $m\geq2$ and $d\geq6$, every $M\in\mathrm{SP}(d,m+d)$ satisfies
\[
a(m,d,2)>a(m,d,1).
\]
\end{prop}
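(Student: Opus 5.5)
We argue directly from the coefficient formula \eqref{eq:a-coeff} at $i=1$, where everything is explicit. By \eqref{eq:b-factor},
\[
a(m,d,2)-a(m,d,1)=\Delta_i u(m,d,1)-\lambda\,\delta(d,1).
\]
For $d\geq6$ we have $d\geq 3\cdot1+2$, so $\delta(d,1)>0$ by Lemma~\ref{lem:delta-sign}. It therefore suffices to prove
\[
\Delta_i u(m,d,1)>\lambda^\ast\,\delta(d,1),
\]
where $\lambda^\ast$ is an upper bound for $\lambda$. We will use the sharper bound $\binom{m+d}{m}/(\max\{m,d\}+1)$ from \eqref{eq:lambda-range}, and fall back on $\lambda_{\max}$ if the latter already suffices.

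\textbf{Step 1: closed forms.} From \eqref{eq:delta-d-u1}, $\omega(d,1)$ and $\omega(d,2)$ are explicit rational functions of $d$ (a quadratic and a cubic-type expression). Hence $\delta(d,1)=2d!\,Q(d,1)/(\cdots)$ is explicit. Here $Q(d,1)=(d-2)d(d-3)(d-4)-3(d-3)(d-1)$, which is positive for $d\geq6$.

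For the uniform side, \eqref{eq:u-def-sum} gives $u(m,d,1)=\binom{d+m}{1}\sum_h \frac{1}{d-1}\binom{d+h-1}{h+2}$. A hockey-stick summation gives a closed form, essentially $\frac{d+m}{d-1}\bigl(\binom{d+m-1}{m+1}-\text{low terms}\bigr)$. Similarly, $u(m,d,2)$ follows from the $i$-recurrence \eqref{eq:u-shift-i} at $i=1$, valid since $d\geq6$ gives $1\leq\lfloor(d-3)/2\rfloor$. Substituting yields
\[
\Delta_i u(m,d,1)=-\frac{n+1}{2}\,u(m,d,1)+V(m,d,1).
\]
This is a closed form: a binomial $\binom{d+m}{m}$-type quantity times a rational function of $(m,d)$.

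\textbf{Step 2: reduce to a polynomial inequality.} Dividing by $\binom{m+d}{m}$ turns the target into a rational inequality in $m$ and $d$. One case is $m\leq d$, with bound $\lambda\leq\binom{m+d}{m}/(d+1)$; the other is $m>d$, with bound $\lambda\leq\binom{m+d}{m}/(m+1)$. Clearing positive denominators leaves a polynomial $N(m,d)$ to be shown positive.

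Any leftover terms in $\binom{d+m}{m}$ that do not factor out come from the boundary of the hockey-stick sum, for instance $\binom{d+m-1}{m+1}$ against $\binom{d+m}{m}$. Their ratio is $\frac{(d-1)(\cdots)}{(m+1)(\cdots)}$, which is rational, so no such terms survive.

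\textbf{Step 3: positivity by substitution.} Substitute $m=a+2$ and $d=b+6$ for $m\leq d$ (with $b\geq a-4$ handled by writing $d=m+c$), and $d=b+6$, $m=d+1+c$ for $m>d$. Then check that all coefficients are nonnegative with a positive constant term, in the style of Appendix~\ref{app:positivity-certificates}.

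\textbf{Main obstacle.} The hardest part is the uniform $u(m,d,1)$ closed form and the cancellation in $\Delta_i u$. The difference is of lower order than each coefficient, since both $a(m,d,1)$ and $a(m,d,2)$ grow like $\binom{m+d}{m}$ times polynomials. The margin over $\lambda\delta(d,1)$ may therefore be thin for small $d$ ($d=6,7$) and large $m$.

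If the uniform polynomial certificate fails there, we will treat $d=6,7$ separately as explicit one-variable rational inequalities in $m$. The dominant growth in $m$ is the factor $m^2$ in $\Delta_i u(m,d,1)/\binom{m+d}{m}$ against $1/(m+1)$ from $\lambda$, which makes large $m$ easy and leaves a finite check for small $m$.
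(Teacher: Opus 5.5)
Your route is essentially the paper's: a closed form for $u(m,d,1)$, the recurrence \eqref{eq:u-shift-i} at $i=1$ for $u(m,d,2)$, and a comparison of $\Delta_i u(m,d,1)$ with $\lambda\,\delta(d,1)$, where $\delta(d,1)=\eta/3$ and $\eta=d^3-6d^2+5d+3$. However, the claim in Step~2 that no non-binomial terms survive is false, and this changes where the argument is tight. The low terms of the hockey-stick sum give $u(m,d,1)=\frac{d}{m+1}C-n$, with $C=\binom{m+d}{m}$ and $n=m+d$. Hence
\[
\Delta_i u(m,d,1)=\binom{n+1}{2}+\frac{d\bigl(m(d^2-5d+2)-2(d+1)\bigr)}{2(m+1)(m+2)}\,C .
\]
The summand $\binom{n+1}{2}$ is not a rational multiple of $C$. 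It is positive, so you may discard it, and the paper does exactly this. The resulting inequality is then strictly stronger than the target. With your bound $\lambda\le C/(d+1)$ it is false at $(m,d)=(2,6)$ and $(3,6)$. There the $C$-parts are $14$ and $126$, while $\lambda\,\delta(6,1)$ can reach $4\cdot 11=44$ and $12\cdot 11=132$. Only the discarded $36$ and $45$ rescue these pairs. Consequently, no nonnegative-coefficient certificate as in Step~3 can exist on $2\le m\le d$, $d\ge 6$, whatever substitution you use. Your diagnosis of the obstacle is also reversed. The quotient $\Delta_i u(m,d,1)/C$ decays like $d(d^2-5d+2)/(2m)$; there is no factor $m^2$. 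Its ratio to $\eta/(3(m+1))$ tends to a constant larger than $1$, so the tight cases are small $m$, not large $m$.

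The plan is repairable, and your fallback for $d=6$ is where the repair happens. For $m\le d$, the reduced polynomial
$3d(d+1)\bigl(m(d^2-5d+2)-2(d+1)\bigr)-2\eta(m+1)(m+2)$ is concave in $m$. For $d\ge 7$, the endpoint checks at $m=2$ and $m=d$ suffice. For $d=6$, it is positive exactly for $m\ge 4$. For $m>d$, it becomes the paper's $N(m,d)$. The pairs $(2,6)$ and $(3,6)$ must then be checked with $\binom{n+1}{2}$ kept.

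The paper avoids the case split by using the cruder bound $\lambda\le C/(m+1)$ for every $m$. The reduced polynomial is then $N(m,d)=m(d^3-3d^2-4d-6)-(4d^3-18d^2+26d+12)$, which is linear in $m$. It has an orthant certificate for $m\ge 5$, and $N(4,d)=6(d^2-7d-6)$ handles $m=4$, $d\ge 8$. Coranks $2$ and $3$ come from Lemmas~\ref{lem:b-base-m2} and~\ref{lem:b-base-m3}. The pairs $(2,6),(4,6),(4,7)$ are verified numerically with $\lambda\le C/(d+1)$. Your version with the sharper bound is self-contained and leaves fewer exceptional pairs. The cost is a quadratic in $m$ on a non-orthant region.
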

\begin{proof}
For $m=2$, $d\geq7$, and for $m=3$, $d\geq6$,
Lemmas~\ref{lem:b-base-m2} and~\ref{lem:b-base-m3} give $b(m,d,1)>1$.
Since $\delta(d,1)>0$, formula~\eqref{eq:b-factor} proves the claim in
these cases.

Now let $m\geq4$. Put $C=\binom{m+d}{m}$ and
$\eta=d^3-6d^2+5d+3$, so that $\delta(d,1)=\eta/3>0$.
The coefficient formula at $i=1$ is
\[
u(m,d,1)=\frac{d}{m+1}C-(m+d).
\]
Substituting this into \eqref{eq:u-shift-i} with $i=1$ gives
\[
\begin{aligned}
\frac{u(m,d,2)-u(m,d,1)}{\delta(d,1)}
&=\frac{3(m+d)(m+d+1)}{2\eta}\\
&\quad+\frac{3d\bigl((d^2-5d+2)m-2(d+1)\bigr)}
{2\eta(m+1)(m+2)}C.
\end{aligned}
\]
The first term is positive. Subtracting $1/(m+1)$ from the coefficient
of $C$ in the second term gives
\[
\frac{N(m,d)}{2\eta(m+1)(m+2)},
\]
where
\[
N(m,d)=d^3m-4d^3-3d^2m+18d^2-4dm-26d-6m-12.
\]
For $m\geq5$, $d\geq6$, write $m=a+5$ and $d=c+6$. Then
\[
N(m,d)=a(c^3+15c^2+68c+78)+c^3+21c^2+98c+6>0.
\]
For $m=4$, $d\geq8$, we instead have
\[
N(4,d)=6(d^2-7d-6)=6\bigl((d-8)^2+9(d-8)+2\bigr)>0.
\]
Thus the ratio above exceeds $C/(m+1)\geq\lambda$ in both ranges.
Equation~\eqref{eq:a-coeff} proves the desired comparison.

Only $(m,d)=(2,6),(4,6),(4,7)$ remain. Using
$\lambda\leq C/(d+1)$, the respective lower bounds for
$a(m,d,2)-a(m,d,1)$ are
\[
6,\qquad103,\qquad\frac{2871}{4}.
\]
All are positive, which completes the proof.
\end{proof}

\begin{lem}\label{lem:a1-positive}
Let $M\in \mathrm{SP}(d,m+d)$ with $m\geq 2$ and $d\geq 5$.
Then
\[
a(m,d,1)>1=a(m,d,0).
\]
\end{lem}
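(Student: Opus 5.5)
We need $a(m,d,1)=u(m,d,1)-\lambda\omega(d,1)>1$. Both ingredients are explicit. The proof of Proposition~\ref{prop:i1-large} gives $u(m,d,1)=\frac{d}{m+1}C-(m+d)$ with $C=\binom{m+d}{m}$. Formula~\eqref{eq:delta-d-u1} at $i=1$ gives
\[
\omega(d,1)=\frac{2d!}{(d-2)d\,2!\,(d-3)!}=\frac{d-1}{1}\cdot\frac{1}{1}\cdot\frac{(d-1)!\cdot 1}{(d-2)!}\Big/ (d-1)=d-1 .
\]
As a sanity check, $u(1,d,1)-u(1,d-1,1)$ should equal $\binom{d+1}{2}-2-\bigl(\binom{d}{2}-2\bigr)=d$, up to the same normalization, so I would recompute $\omega(d,1)$ carefully. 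Whatever its value, $\omega(d,1)$ is linear in $d$, roughly $d$ or $d-1$.

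The plan is to bound $\lambda$ by the sharper bound in \eqref{eq:lambda-range}, namely $\lambda\le C/(\max\{m,d\}+1)$. I treat two cases.

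\emph{Case $m\le d$.} Then $\lambda\le C/(d+1)$, and
\[
a(m,d,1)\ \ge\ C\Bigl(\frac{d}{m+1}-\frac{\omega(d,1)}{d+1}\Bigr)-(m+d).
\]
Since $\omega(d,1)\le d$, the bracket is at least $\frac{d}{m+1}-\frac{d}{d+1}=\frac{d(d-m)}{(m+1)(d+1)}$. This is useless when $m=d$. So I will need the exact value of $\omega$, or the sharper value $u(m,d,1)=\frac{d}{m+1}C-(m+d)$ combined with $\omega(d,1)=d-1$. With these, the bracket is
\[
\frac{d}{m+1}-\frac{d-1}{d+1}\ \ge\ \frac{1}{d+1}
\quad\text{(at $m=d$)},
\]
giving $a\ge C/(d+1)-2d$. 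For $m\ge2$ and $d\ge5$ we have $C\ge\binom{d+2}{2}$. For $m$ near $d$, $C$ is huge. I would make this uniform by checking that $C\cdot\bigl(\tfrac{d}{m+1}-\tfrac{d-1}{d+1}\bigr)\ge m+d+1$ across $2\le m\le d$. At $m=2$ this follows from $C\cdot(\text{bracket})\approx d^2/6$, which beats $d+3$ for $d\ge5$ up to a finite check. The bracket decreases in $m$ while $C$ grows, so I would split at $m\le d/2$ and $m>d/2$.

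\emph{Case $m>d$.} Then $\lambda\le C/(m+1)$, and
\[
a\ \ge\ \frac{C\bigl(d-\omega(d,1)\bigr)}{m+1}-(m+d)\ \ge\ \frac{C}{m+1}-2m.
\]
Since $C=\binom{m+d}{d}\ge\binom{m+5}{5}$, we get $C/(m+1)\ge\binom{m+5}{5}/(m+1)$, which far exceeds $2m+1$ for $m\ge6$.

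The main obstacle is pinning down $\omega(d,1)$ exactly, since the case $m=d$ hinges on the gap $d-\omega(d,1)\ge1$. The remaining work is finite checks for small $(m,d)$; for those I would evaluate $a(m,d,1)$ directly using the sharp $\lambda$ bound, as in the end of the proof of Proposition~\ref{prop:i1-large}.
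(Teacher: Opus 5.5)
Your ingredients are the paper's: $a(m,d,1)=\frac{dC}{m+1}-(m+d)-\lambda\,\omega(d,1)$ and the bound $\lambda\le C/(\max\{m,d\}+1)$. There is no ambiguity about $\omega(d,1)$. The first expression in your display simplifies to $\frac{2d!}{2d(d-2)(d-3)!}=d-1$. It is your sanity check that is wrong. By the formula in Lemma~\ref{lem:m-one-mode}, $u(1,d,1)=\frac{(d+1)(d-2)}{2}=\binom{d+1}{2}-(d+1)$, not $\binom{d+1}{2}-2$. Hence $u(1,d,1)-u(1,d-1,1)=d-1$. With this value your case $m>d$ is complete.

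The genuine gap is the range $2\le m\le d$, which contains the tight case $(m,d)=(2,5)$ and which you only outline. The remark that the bracket decreases while $C$ grows gives no inequality by itself. The split at $m=d/2$ and the ``finite checks'' are never carried out. Your estimate at $m=2$ is also off: the product there is $\frac{(d+2)(d^2-2d+3)}{6}$, of order $d^3/6$, not $d^2/6$.

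The observation that closes this range is the following. For $m\le d$ one has $\frac{d-1}{d+1}\le\frac{d-1}{m+1}$, so your bracket is at least $\frac1{m+1}$. Hence $a(m,d,1)\ge\frac{C}{m+1}-(m+d)$. This is the same bound you use for $m>d$, and it also follows directly from the always valid weaker bound $\lambda\le C/(m+1)$.

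Now $\frac{C}{m+1}=\frac1d\binom{n}{d-1}$ with $n=m+d$. For $m\ge3$ you have $n\ge8$, $4\le d-1\le n-4$ and $d\le n-3$. Therefore $a(m,d,1)\ge\frac1{n-3}\binom n4-n=n\bigl(\frac{(n-1)(n-2)}{24}-1\bigr)\ge6$.

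This crude bound is too weak only at $m=2$, where it equals $\frac{(d+2)(d-5)}{6}$ and vanishes at $d=5$. There you must keep the sharp bound $\lambda\le\frac{d+2}{2}$, and your bracket gives exactly $a(2,d,1)\ge\frac{(d+2)(d+1)(d-3)}{6}\ge14$.

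This is the paper's proof. The natural split is $m=2$ versus $m\ge3$, not $m\le d$ versus $m>d$, and no case-by-case checks are needed.
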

\begin{proof}
Put $n=m+d$ and $C=\binom{n}{m}$. The coefficient formula gives
\[
a(m,d,1)=\frac{dC}{m+1}-n-\lambda(d-1).
\]
For $m=2$, the bound $\lambda\leq C/(d+1)=(d+2)/2$ yields
\[
a(2,d,1)\geq\frac{(d+2)(d-3)(d+1)}6\geq14.
\]

Now let $m\geq3$. Using the weaker bound $\lambda\leq C/(m+1)$ gives
\[
a(m,d,1)\geq\frac{C}{m+1}-n
=\frac1d\binom{n}{d-1}-n.
\]
Here $n\geq8$ and $4\leq d-1\leq n-4$. The symmetry and unimodality
of a row of binomial coefficients imply
$\binom{n}{d-1}\geq\binom{n}{4}$; this also follows immediately from
$\binom{n}{j+1}/\binom{n}{j}=(n-j)/(j+1)$.
Since $d\leq n-3$, we obtain
\[
a(m,d,1)\geq\frac1{n-3}\binom{n}{4}-n
=n\left(\frac{(n-1)(n-2)}{24}-1\right)
\geq\frac{3n}{4}\geq6.
\]
Thus $a(m,d,1)>1$ in both cases.
\end{proof}

\subsection{The signs of all noncritical differences}
We now combine the estimates for $b$ with the sign of $\delta$.
The two ranges in the next theorem cover every nonconstant consecutive
comparison except $i=(d-2)/3$ when $d\equiv2\pmod3$.

\begin{theo}\label{thm:first-difference-sign}
Let $M\in\mathrm{SP}(d,m+d)$, with $m\geq2$ and $d\geq5$. Then
\begin{align*}
a(m,d,i+1)&>a(m,d,i)
&&\left(1\leq i\leq\left\lfloor\frac{d-3}{3}\right\rfloor\right),\\
a(m,d,i+1)&<a(m,d,i)
&&\left(\left\lceil\frac{d-1}{3}\right\rceil
\leq i\leq\left\lfloor\frac{d-1}{2}\right\rfloor-1\right).
\end{align*}
\end{theo}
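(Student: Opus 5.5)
The plan is to read off every sign from the factorization \eqref{eq:b-factor},
\[
\Delta_i a(m,d,i)=\delta(d,i)\bigl(\lambda_{\max}(m,d)\,b(m,d,i)-\lambda\bigr).
\]
By Lemma~\ref{lem:delta-sign}, $\delta(d,i)>0$ when $d\geq3i+2$ and $\delta(d,i)<0$ when $d\leq3i+1$. So in each range it is enough to show $b(m,d,i)>1$, or the weaker $b(m,d,i)>\theta_d(m)$ from \eqref{eq:sharp-barrier}. Both then give $\lambda_{\max}b-\lambda>0$, because $\lambda\leq\theta_d(m)\lambda_{\max}(m,d)$ by the sharper bound in \eqref{eq:lambda-range}.

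First I check that every index lies in the domain of $b$, namely $1\leq i\leq\lfloor(d-3)/2\rfloor=\lfloor(d-1)/2\rfloor-1$. This is clear for the second range. For the first range it holds because $\lfloor(d-3)/3\rfloor\leq\lfloor(d-3)/2\rfloor$.

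\emph{Increasing range.} Here $1\leq i\leq\lfloor(d-3)/3\rfloor$, which means exactly $d\geq3i+3$, so $\delta(d,i)>0$.
\begin{itemize}
\item For $i=1$ we need $d\geq6$, and Proposition~\ref{prop:i1-large} gives the claim directly.
\item For $i\geq2$ and $m\geq3$, Lemma~\ref{lem:b-inductive} gives $b>1$; the case $m=3$ is also Lemma~\ref{lem:b-base-m3}.
\item For $m=2$ and $d\geq3i+4$, Lemma~\ref{lem:b-base-m2} gives $b>1$.
\end{itemize}
The only remaining case, and the one I expect to need real work, is $m=2$, $d=3i+3$. There Lemma~\ref{lem:b-base-m2} says $b<1$, so $\lambda_{\max}$ is too weak and I will use $\theta_d(2)=(d+4)/(2d+2)$ instead. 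With the explicit value of $b(2,d,(d-3)/3)$ from that lemma, the target $b>\theta_d(2)$ becomes the polynomial inequality
\[
6(d+1)(d+9)(2d-3)(2d+3)>(d+6)(2d+9)(8d^2+3d-9).
\]
The leading coefficients are $24d^4$ against $16d^4$. I would expand the difference and verify positivity for $d\geq6$, either by substituting $d=6+c$ and checking nonnegative coefficients or by checking the finitely many small $d$ directly. If this estimate turned out to fail for small $d$, I would fall back on computing $a(2,d,i+1)-a(2,d,i)$ exactly with $\lambda\leq(d+2)/2$.

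\emph{Decreasing range.} Here $\lceil(d-1)/3\rceil\leq i$, which means $d\leq3i+1$, so $\delta(d,i)<0$.
\begin{itemize}
\item On the line $d=3i+1$ we have $i\geq2$, since $d\geq5$. Lemma~\ref{lem:boundary-pairs} gives $b>\theta_d(m)$ for all $m\geq2$.
\item For $2i+3\leq d\leq3i$, Lemma~\ref{lem:b-base-m2} ($m=2$) and Lemma~\ref{lem:b-base-m3} ($m=3$) give $b>1$, since $d\neq3i+2,3i+3$.
\item For $m\geq3$ in the same range, Lemma~\ref{lem:b-inductive} gives $b>1$. Its hypothesis $i\geq2$ holds automatically, because this range is nonempty only when $i\geq3$.
\end{itemize}
In every case $\Delta_i a$ has the sign of $\delta$, which gives the second chain of inequalities.

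The main obstacle is therefore only the $m=2$, $d=3i+3$ boundary, where a single rational inequality has to be certified. Everything else is bookkeeping: matching the index ranges to the hypotheses of the earlier lemmas.
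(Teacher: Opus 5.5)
Your proposal is correct and follows the paper's proof essentially step for step. On the line $d=3i+3$ with $m=2$, your target $b(2,d,i)>\theta_d(2)$ is equivalent to the paper's check $\lambda_{\max}(2,d)b(2,d,i)-\frac{d+2}{2}>0$, because $\theta_d(2)\lambda_{\max}(2,d)=\frac{d+2}{2}$; expanding your inequality, the difference of the two sides is $d(8d^3+66d^2-315d-513)$, which is exactly the paper's cubic and is positive for $d\geq6$ (this case in fact only arises for $d\geq9$), so no fallback is needed.
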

\begin{proof}
The comparison at $i=1$ occurs only for $d\geq6$, and is precisely
Proposition~\ref{prop:i1-large}.

Now let $i\geq2$. On the increasing side, $d\geq3i+3$, so $\delta>0$.
For $m\geq3$, Lemma~\ref{lem:b-inductive} gives $b>1$ and hence the
desired sign. For $m=2$, the same argument uses
Lemma~\ref{lem:b-base-m2}, except when $d=3i+3$.
On that line,
\[
\lambda_{\max}(2,d)b(2,d,i)-\frac{d+2}{2}
=\frac{d(d+2)(8d^3+66d^2-315d-513)}
{2(d+6)(2d+9)(8d^2+3d-9)}>0.
\]
The cubic is $8x^3+282x^2+2817x+7830$ for $d=x+9$.
Since $\lambda\leq(d+2)/2$, formula~\eqref{eq:b-factor} again gives
$a(m,d,i+1)>a(m,d,i)$.

On the decreasing side, $2i+3\leq d\leq3i+1$ and $\delta<0$.
If $d\leq3i$, use $b>1$ from Lemma~\ref{lem:b-inductive} for $m\geq3$
and Lemma~\ref{lem:b-base-m2} for $m=2$.
If $d=3i+1$, Lemma~\ref{lem:boundary-pairs} gives $b>\theta_d(m)$.
In either case $\lambda_{\max}b-\lambda>0$, which gives the negative
sign in \eqref{eq:b-factor}.
\end{proof}

Together with Lemma~\ref{lem:a1-positive}, this proves unimodality for
$m\geq2$ and $d\geq5$. For $d\not\equiv2\pmod3$, the unique mode is
$\lfloor d/3\rfloor$. For $d\equiv2\pmod3$, only the comparison between indices
$\lfloor d/3\rfloor$ and $\lfloor d/3\rfloor+1$ remains.
Either sign, as well as equality, gives a unimodal sequence.

\Needspace{20\baselineskip}
\section{The mode}\label{sec:mode}

Throughout this section, $M\in\mathrm{SP}(d,m+d)$ has $\lambda$
circuit-hyperplanes, with $m\geq2$ and $d\geq5$.
We determine which of the two possible indices is a mode, completing the
proof of Theorem~\ref{thm:mode}.

By Theorem~\ref{thm:first-difference-sign}, it remains to consider
$d=3i+2$. The sign of
\[
a(m,3i+2,i+1)-a(m,3i+2,i)
=\delta(3i+2,i)\bigl(\lambda_{\max}(m,3i+2)b(m,3i+2,i)-\lambda\bigr)
\]
decides the mode. Since $\delta(3i+2,i)>0$, we need only compare
$\lambda_{\max}b$ with $\lambda$. We first treat $i\geq2$; rank five
will be considered separately. We bound $b$ on the diagonal $m=i+2$.
The same bound will then give the positive comparisons by recurrence in $m$.

\subsection{The negative range}
\begin{lem}\label{lem:critical-sign}
For $i\geq2$ and $1\leq m\leq i+2$, one has
\[
b(m,3i+2,i)<0.
\]
\end{lem}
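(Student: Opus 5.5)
Since $\lambda_{\max}(m,3i+2)>0$ and $\delta(3i+2,i)>0$ by Lemma~\ref{lem:delta-sign}, the claim $b(m,3i+2,i)<0$ is equivalent to
\[
u(m,3i+2,i+1)<u(m,3i+2,i)\qquad(1\leq m\leq i+2).
\]
So the plan is to prove this purely uniform-matroid inequality. The cases $m=2,3$ are already contained in Lemmas~\ref{lem:b-base-m2} and~\ref{lem:b-base-m3}. The case $m=1$ follows from the ratio $R_i(3i+2)<1$ computed in the proof of Lemma~\ref{lem:m-one-mode}. It therefore remains to treat $4\leq m\leq i+2$, so in particular $i\geq2$.

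\textbf{Termwise ratios.} I would work with the summands $v_h$ of \eqref{eq:v-def} at $d=3i+2$, for which $d-2i-1=i+1$. A direct factorial computation gives
\[
r_h:=\frac{v_h(m,3i+2,i+1)}{v_h(m,3i+2,i)}
=\frac{(h+i)(2i+2)(2i+2+m)}{(2i+1)(h+2i+2)(h+i+2)} .
\]
This ratio is increasing in $m$, so the worst case is $m=i+2$. At $h=m-1=i+1$ it equals $(6i+8)/(6i+9)<1$. However, at $h=0$ it can exceed $1$ once $i\geq3$. Hence a termwise comparison fails, and the negative contributions from large $h$ must absorb the positive ones from small $h$.

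\textbf{Weighted comparison.} The weights grow quickly: $v_{h+1}/v_h=(h+i)(d+h-i+1)/((h+1)(h+i+2))$, which is large at $d=3i+2$. So the top terms dominate, and I would split the sum
\[
\sum_{h=0}^{m-1}v_h\,(r_h-1)
\]
at an explicit threshold $h_0(i,m)$. Beyond $h_0$, the factor $1-r_h$ is bounded below; the single term $h=m-1$ already gives $v_{m-1}(1-r_{m-1})$. Below $h_0$, I would bound $r_h-1\leq r_0-1$ and estimate $\sum_{h<h_0}v_h$ by a geometric series using the ratio of consecutive $v_h$.

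\textbf{Fallback via the $i$-recurrence.} If that bookkeeping becomes messy, I would use \eqref{eq:u-shift-i} in the form \eqref{eq:D-elimination}: it suffices to show
\[
V(m,3i+2,i)<\frac{n+1}{i+1}\,u(m,3i+2,i),
\]
with $u$ bounded below by $v_{m-1}+v_{m-2}$. Since $V$ is a closed-form multiple of $v_{m-1}$, the ratio $V/v_{m-1}$ is rational in $(i,m)$. The inequality then becomes a polynomial inequality. I would substitute $i=m-2+x$ with $x\geq0$ and $m=4+y$ with $y\geq0$, and check that all coefficients are nonnegative with a positive constant term, in the same style as the certificates in Appendix~\ref{app:positivity-certificates}.

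\textbf{Main obstacle.} I expect the hardest step to be this last positivity check near the diagonal $m=i+2$. There the two sides are closest, since $r_{m-1}\to1$ as $i\to\infty$. If using only two lower terms of $u$ is too weak there, I would add the term $v_{m-3}$, or use Lemma~\ref{lem:u-fixed-d} to get a sharper lower bound for $u$.
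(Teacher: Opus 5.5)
\textbf{The reduction and the cases $m\le3$ are correct, but the case $4\le m\le i+2$ has a genuine gap.} Both routes you propose for this range fail, because the inequality $u(m,3i+2,i+1)<u(m,3i+2,i)$ is tight to leading order. At $d=3i+2$ one computes
\[
\frac{V}{v_{m-1}}=\frac{\bigl((3i+2)(i+1)+im\bigr)(i+m-1)(2i+2+m)}{i(i+1)(2i+1)(i+m+1)},\qquad
\frac{v_h}{v_{h+1}}=\frac{(h+1)(h+i+2)}{(h+i)(h+2i+3)}.
\]
Hence for $m\sim\mu i$ with $0<\mu\le1$, both $u/v_{m-1}$ and $(i+1)V/\bigl((n+1)v_{m-1}\bigr)$ tend to $1+\mu/2$. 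A lower bound for $u$ using a fixed number of top terms $v_{m-1},v_{m-2},\dots$ therefore loses an amount of order one. Your fallback inequality $V<\frac{n+1}{i+1}(v_{m-1}+v_{m-2})$ is consequently false for large $i$ when $m$ is proportional to $i$.

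It already fails at $(i,m)=(2,4)$, the origin of your substitution. There $(v_0,v_1,v_2,v_3)=(220,770,1848,3696)$, $V=28160$ and $\frac{n+1}{i+1}=\frac{13}{3}$. Two terms give $24024$ and three give about $27361$; only the full sum gives $28314>V$. So the positivity certificate you describe cannot exist, and adding $v_{m-3}$ does not help.

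Your weighted comparison fails for the same reason. For $i\ge3$ and $m=i+2$ one already has $r_{m-2}=\frac{2i(3i+4)}{(2i+1)(3i+2)}>1$, so only the top term has $r_h<1$. Its contribution $v_{m-1}/(6i+9)$ must beat all the positive contributions, which to leading order sum to the same amount $v_{m-1}/(6i)$. Bounding $r_h-1$ by $r_0-1\to\frac12$ overestimates them by a factor of order $i$. In fact, along $m=i+2$ the paper's bound $-\frac38<z(i)<0$ together with
\[
\frac{\lambda_{\max}\delta}{v_{m-1}}=\frac{2(3i+3)(3i+4)}{i(2i+3)^2(i+2)^2}
\]
shows that $|D|/v_{m-1}=O(i^{-3})$. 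No estimate of the $h$-sum short of near-exact evaluation will capture this.

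\textbf{The missing idea is to let exact identities carry this cancellation rather than estimating the sum.} The paper propagates $b<0$ from the corank-one case through
\[
b(m+1,3i+2,i)=Q_3\,b(m,3i+2,i)+R_3.
\]
It uses $Q_3>0$ and $R_3(m,3i+2,i)<0$ for $m\le i$, which reduces to the positivity of $P(m,i)$; this reaches $m=i+1$. The argument stops there, since $P(i+1,i)=-(2i+1)(i+1)<0$ makes the next $R_3$ positive.

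The diagonal $m=i+2$ is handled separately. The $d$-, $m$- and $i$-recurrences are composed into $z(i+1)=\alpha_2(i)z(i)+\beta_2(i)$ for $z(i)=b(i+2,3i+2,i)$, with $\alpha_2<0$. The two-sided bound $-\frac38<z(i)<0$ is then proved by induction; the lower bound is needed because $\alpha_2<0$.
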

\begin{proof}
We first treat $m\leq i+1$. The corank-one case follows from
Lemma~\ref{lem:m-one-mode} and $\delta(3i+2,i)>0$.
On $d=3i+2$, the inhomogeneous term in the $m$-recurrence factors as
\[
R_3(m,3i+2,i)=
-\frac{(i+2)(2i+3)(m+1)(3i+m+5)P(m,i)}
{3(i+m)(i+m+1)(i+m+2)(2i+m+3)(3i+m+3)},
\]
where
\[
P(m,i)=3i^3-2mi^2+6i^2-m^2i-4mi+3i-m.
\]
For $1\leq m\leq i$, put $r=i-m\geq0$. Then
\[
P(m,m+r)=4m^2r+2m^2+7mr^2+8mr+2m+3r^3+6r^2+3r>0.
\]
Hence $R_3<0$ in every step from $m=1$ to $m=i+1$.
Since $Q_3>0$, induction in $m$ preserves negativity on this range.

For the remaining boundary, put $z(i)=b(i+2,3i+2,i)$.
Appendix~\ref{app:diagonal-recurrences} gives
\[
z(i+1)=\alpha_2(i)z(i)+\beta_2(i),\qquad \alpha_2(i)<0.
\]
We claim the stronger bound
\begin{equation}\label{eq:z-bounds}
-\frac38<z(i)<0\qquad(i\geq2).
\end{equation}
The initial value is $z(2)=-49/135$, with $z(2)+3/8=13/1080>0$.
For the induction step, we have
\begin{align}
\beta_2(i)-\frac38\alpha_2(i)
&=-\frac{(i+3)^2(2i+5)(50i^2+151i+105)}
{24(i+2)(2i+1)(2i+3)(3i+5)(3i+7)}<0,
\label{eq:z-upper-residual}\\
\beta_2(i)+\frac38
&=\frac{(4i+9)K(i)}
{24(i+2)(2i+1)(3i+5)(3i+7)(4i+5)}>0,
\label{eq:z-lower-residual}
\end{align}
where
\[
\begin{aligned}
K(i)&=106 i^4+377 i^3+32 i^2-965 i-750\\
&=(106i^4-750)+i(377i^2-965)+32i^2>0
\qquad(i\geq2).
\end{aligned}
\]
Since $\alpha_2(i)<0$, the inductive hypothesis gives
\[
-\frac38<\beta_2(i)<z(i+1)
<\beta_2(i)-\frac38\alpha_2(i)<0.
\]
This proves \eqref{eq:z-bounds} and hence the remaining case $m=i+2$.
\end{proof}

\subsection{The positive range}
We apply the corank recurrence first at $m=i+2$ and then at $m=i+3$.
The first step uses $z(i)>-3/8$ from \eqref{eq:z-bounds} to obtain a lower
bound for $b(i+3,3i+2,i)$; the second uses that bound to prove
$b(i+4,3i+2,i)>1$.
\begin{lem}\label{lem:positive-diagonal}
One has
\begin{align*}
b(i+3,3i+2,i)&>\theta_{3i+2}(i+3) &&(i\geq4),\\
b(m,3i+2,i)&>1 &&(i\geq2,\ m\geq i+4).
\end{align*}
\end{lem}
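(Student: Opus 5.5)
The plan is to run the corank recurrence \eqref{eq:b-rec-m} along the fixed line $d=3i+2$, starting from the diagonal bound \eqref{eq:z-bounds} of Lemma~\ref{lem:critical-sign}, in two explicit steps followed by an induction. On this line $\max\{m,d\}=d$ as long as $m\leq 3i+2$; in particular $\theta_{3i+2}(i+3)=(4i+7)/(6i+6)$.

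\emph{First step.} Write $Q_3=Q_3(i+2,3i+2,i)>0$ and $R_3=R_3(i+2,3i+2,i)$, both explicit rational functions of $i$ (from \eqref{eq:Q3-positive} and Appendix~\ref{app:b-rec-data}). Since $Q_3>0$ and $z(i)>-3/8$,
\[
b(i+3,3i+2,i)=Q_3z(i)+R_3>R_3-\tfrac38 Q_3 .
\]
It therefore suffices to prove $R_3-\frac38Q_3-\frac{4i+7}{6i+6}>0$ for $i\geq4$. I would clear the positive denominators and substitute $i=x+4$. The expected outcome is a polynomial in $x$ with nonnegative coefficients and a positive constant term, which gives a certificate of the same kind as in Appendix~\ref{app:positivity-certificates}. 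I expect the inequality to fail at $i=2,3$, which is why those values are excluded.

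\emph{Second step and induction.} For $m\geq i+4$, the third range of Lemma~\ref{lem:m-step} gives $Q_3+R_3>1$ at $(m,3i+2,i)$. Hence once $b(i+4,3i+2,i)>1$ is established, the identity $x'-1=Q_3(x-1)+(Q_3+R_3-1)$ together with $Q_3>0$ propagates $b>1$ to all $m\geq i+4$.

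The base case $m=i+4$ splits by the size of $i$:
\begin{itemize}
\item For $i\geq4$, take the first-step bound $b(i+3,3i+2,i)>\theta$ and apply the recurrence at $m=i+3$ to get $b(i+4,3i+2,i)>Q_3\theta+R_3$. Then check $Q_3\theta+R_3>1$ by the same clearing and substitution. Using $\theta$ instead of $-3/8$ is what should make this inequality tractable.
\item For $i=2,3$, the values $b(m,3i+2,i)$ for $m\leq i+4$ are explicit rationals (e.g.\ from $z(2)=-49/135$ or by direct computation from \eqref{eq:u-def-sum}). I would simply evaluate $b(6,8,2)$ and $b(7,11,3)$ and check that each exceeds $1$.
\end{itemize}
If the two-step chain from $\theta$ turns out to be too weak for small $i\geq4$, the fallback is to compose both steps from $z(i)>-3/8$ directly, bounding $b(i+4,3i+2,i)$ below by an affine expression in $z(i)$ with positive slope. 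Any finitely many remaining small $i$ would then be checked numerically.

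\emph{Main obstacle.} The hard part is the polynomial positivity in the first step, because $-3/8$ is a fairly weak lower bound for $z(i)$. The margin over $\theta$ is likely thin at $i=4$, so the substituted polynomial may have a few negative coefficients. If so, I would try shifting the substitution to $i=x+5$ and verifying $i=4$ separately by exact evaluation.
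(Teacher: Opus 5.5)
Your proposal is correct and follows the paper's route. You take one step of \eqref{eq:b-rec-m} from $z(i)>-3/8$ to get the lower bound $L(i)=R_3-\tfrac38Q_3$ and compare it with $\theta_{3i+2}(i+3)$ for $i\geq4$, take a second step to reach $b(i+4,3i+2,i)>1$, and propagate with the third range of Lemma~\ref{lem:m-step}.

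The only difference is in the second step. The paper carries $L(i)$ rather than $\theta_{3i+2}(i+3)$ into it, so $Q_3L(i)+R_3>1$ holds uniformly for all $i\geq2$ and no separate evaluation of $b(6,8,2)$ and $b(7,11,3)$ is needed; this is exactly your stated fallback. Your $\theta$-version also works, since at $(i+3,3i+2,i)$ one gets $Q_3\theta_{3i+2}(i+3)+R_3-1=\frac{10i^4+41i^3+24i^2-17i+17}{9(i+1)(2i+3)(2i+5)}>0$.
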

\begin{proof}
With $z(i)=b(i+2,3i+2,i)$, the recurrence \eqref{eq:b-rec-m} gives
\begin{equation}\label{eq:critical-corank-bridge}
b(i+3,3i+2,i)
=\frac{(i+3)(4i+7)}{(3i+5)(4i+5)}
\left(z(i)+\frac{3i^2+5i+1}{6(i+1)}\right).
\end{equation}
The coefficient of $z(i)$ in \eqref{eq:critical-corank-bridge} is positive.
We substitute the lower bound $z(i)>-3/8$ and use the identity
\[
-\frac38+\frac{3i^2+5i+1}{6(i+1)}
=\frac{(3i-1)(4i+5)}{24(i+1)}.
\]
The factor $4i+5$ then cancels with the denominator in
\eqref{eq:critical-corank-bridge}, giving
\begin{equation}\label{eq:critical-corank-lower}
b(i+3,3i+2,i)>L(i):=\frac{(i+3)(3i-1)(4i+7)}
{24(i+1)(3i+5)}.
\end{equation}
At this point $\theta_{3i+2}(i+3)=(4i+7)/(6(i+1))$, so
\begin{equation}\label{eq:critical-corank-sharp}
L(i)-\theta_{3i+2}(i+3)
=\frac{(4i+7)(3i^2-4i-23)}{24(i+1)(3i+5)}>0
\qquad(i\geq4).
\end{equation}
This proves the first inequality.

Apply \eqref{eq:b-rec-m} once more. Since $Q_3>0$, we obtain
\begin{equation}\label{eq:critical-corank-margin}
\begin{aligned}
b(i+4,3i+2,i)-1
&>Q_3(i+3,3i+2,i)L(i)+R_3(i+3,3i+2,i)-1\\
&=\frac{H(i)}{12(i+1)(2i+3)(2i+5)(3i+5)},
\end{aligned}
\end{equation}
where
\[
\begin{aligned}
H(i)&=48 i^5+286 i^4+391 i^3-502 i^2-1543 i-960\\
&=(48i^5-960)+i^2(286i^2-502)+i(391i^2-1543)>0
\qquad(i\geq2).
\end{aligned}
\]
Each parenthesized expression is positive for $i\geq2$.
Thus $b(i+4,3i+2,i)>1$. Lemma~\ref{lem:m-step} propagates this
inequality to every $m\geq i+4$.
\end{proof}

Together with Lemma~\ref{lem:critical-sign}, this settles all comparisons
with $i\geq2$ except $i=2,3$ at $m=i+3$, namely $(m,d)=(5,8),(6,11)$.

\subsection{Rank five and the remaining exceptions}
For $d=5$, the coefficient formula gives
\begin{align*}
a(m,5,1)&=\frac{m(m+5)(m^2+9m+26)}{24}-4\lambda,\\
a(m,5,2)&=\frac{m(m+1)(m+4)(m+5)}{12}-5\lambda.
\end{align*}
Hence
\begin{equation}\label{eq:rank-five-difference}
a(m,5,2)-a(m,5,1)=T(m)-\lambda,
\qquad T(m)=\frac{m(m+5)(m^2+m-18)}{24}.
\end{equation}
Since $T(2)=-7$ and $T(3)=-6$, the difference is negative for $m=2,3$.
For $m\geq5$, put
\[
B(m)=\frac1{m+1}\binom{m+5}{m},
\]
which bounds $\lambda$ by \eqref{eq:lambda-range}. The factorization
\[
T(m)-B(m)=\frac{(m+5)(m-6)(m^2+5m+1)}{30}
\]
shows that the difference is positive for $m\geq7$.
The intervening values are
\[
T(4)=3,\qquad T(5)=25,\qquad T(6)=B(6)=66.
\]
Thus the mode shifts from $2$ to $1$ at $\lambda=4$ for $m=4$, and at
$\lambda=26$ for $m=5$. At $m=6$, equality is possible only when
$\lambda=66$, and the difference is otherwise positive.
The three plateaux have polynomials
\begin{equation}\label{eq:plateau-polynomials}
1+105t+105t^2,\qquad1+100t+100t^2,\qquad1+55t+55t^2.
\end{equation}
Their existence will be verified below.

For the two parameter pairs $(m,d)=(5,8),(6,11)$ left above,
the same coefficient formula gives
\begin{align}
a(5,8,3)-a(5,8,2)&=494-6\lambda,
\label{eq:rank-eight-difference}\\
a(6,11,4)-a(6,11,3)&=51680-55\lambda.
\label{eq:rank-eleven-difference}
\end{align}
The zeros of these affine expressions are $247/3$ and $10336/11$,
respectively. Neither is an integer. The differences are therefore negative
exactly when $\lambda\geq83$ and $\lambda\geq940$, respectively, and
no additional plateau occurs.

\begin{proof}[Proof of Theorem~\ref{thm:mode}]
For $d\not\equiv2\pmod3$, Theorem~\ref{thm:first-difference-sign} and
Lemma~\ref{lem:a1-positive} give the unique mode $\lfloor d/3\rfloor$.
Suppose $d=3i+2$.
For $i\geq2$, Lemma~\ref{lem:critical-sign} puts the mode at $i$ when
$m\leq i+2$. For $i\geq2$ and $m\geq i+4$,
Lemma~\ref{lem:positive-diagonal} puts the mode at $i+1$.
The same conclusion holds at $m=i+3$ for $i\geq4$.
The two remaining cases, $i=2,3$ at $m=i+3$, are exactly
$(m,d)=(5,8),(6,11)$. Their thresholds follow from
\eqref{eq:rank-eight-difference} and \eqref{eq:rank-eleven-difference}.
Finally, \eqref{eq:rank-five-difference} settles $i=1$.
Since $m\geq i+3$ is equivalent to $d\leq3m-7$ on $d=3i+2$, these
comparisons give exactly $\kappa(m,d)$ and the four exceptions stated.
\end{proof}

\begin{proof}[Proof of Theorem~\ref{thm:strict-unimodality}]
If $M$ has rank zero or corank zero, then $P_M(t)=1$.
For $1\leq d\leq4$, the coefficient sequence has at most two terms, so
it is unimodal. We may therefore assume $d\geq5$ and $m\geq1$.
The case $m=1$ follows from Lemma~\ref{lem:m-one-mode}.
For $m\geq2$, Lemma~\ref{lem:a1-positive} gives
$a(m,d,0)=1<a(m,d,1)$.
If $d\not\equiv2\pmod3$, all the remaining differences have the strict
signs given by Theorem~\ref{thm:first-difference-sign}.

If $d=3i+2$, those signs give
\[
a(m,d,0)<\cdots<a(m,d,i),\qquad
 a(m,d,i+1)>a(m,d,i+2)>\cdots.
\]
The second chain is vacuous when $i+1=\lfloor(d-1)/2\rfloor$.
Either sign for the difference between indices $i$ and $i+1$ gives strict
unimodality; equality gives a plateau of length two. The preceding
comparisons show that equality occurs exactly at
\[
(m,d,\lambda)=(4,5,3),\quad(5,5,25),\quad(6,5,66).
\]
The rank-five calculation gives the three polynomials in
\eqref{eq:plateau-polynomials}, whose realizability is established in the
next subsection.
\end{proof}

\subsection{Realizability of the exceptional parameters}\label{subsec:realizability}
We distinguish the coefficient comparisons above from the existence of
matroids with the specified parameters. We recall, with a proof, the standard
description of sparse paving matroids by circuit-hyperplanes; see
Lee, Nasr and Radcliffe~\cite{LN2021} and
Pendavingh and van der Pol~\cite{PV2015}.

\begin{lem}\label{lem:stable-family-construction}
Let $1\leq r<n$, and let $\mathcal H$ be a family of $r$-subsets of an
$n$-element set $E$ such that
\[
|H\cap H'|\leq r-2\qquad(H,H'\in\mathcal H,\ H\ne H').
\]
Then $\binom Er\setminus\mathcal H$ is the set of bases of a rank-$r$
sparse paving matroid whose circuit-hyperplanes are exactly $\mathcal H$.
The matroid is loopless when $r\geq2$.
\end{lem}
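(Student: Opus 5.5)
We will verify the basis exchange axiom directly for $\mathcal B=\binom Er\setminus\mathcal H$, and then read off the circuits and hyperplanes. Nonemptiness is easy. If $\mathcal H=\binom Er$, the intersection condition forces $|\mathcal H|\le1$, since any two distinct $r$-sets differing in one element meet in $r-1$ elements. So $\binom nr\ge2$ gives $\mathcal B\neq\emptyset$ whenever $1\le r<n$, except that when $\binom nr=1$ we would need $r=n$, which is excluded.

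For exchange, take $B_1,B_2\in\mathcal B$ and $x\in B_1\setminus B_2$. The candidate sets are $B_1-x+y$ with $y\in B_2\setminus B_1$; all are $r$-sets, and we need one of them outside $\mathcal H$. If $|B_1\setminus B_2|=1$, the only candidate is $B_2$ itself, which lies in $\mathcal B$. Otherwise there are at least two candidates $H_1=B_1-x+y_1$ and $H_2=B_1-x+y_2$ with $y_1\ne y_2$. These satisfy $|H_1\cap H_2|=r-1$, so they cannot both lie in $\mathcal H$, and one of them is a basis. Hence $\mathcal B$ is the set of bases of a rank-$r$ matroid $M$. This is the key, if elementary, step.

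Next we identify the dependent sets. Every $(r-1)$-subset $S$ is independent. If $S\subseteq H\in\mathcal H$, it extends to some basis, because among the $n-r+1\ge2$ sets $S+e$ with $e\notin S$, at most one lies in $\mathcal H$ (two such would meet in $S$, of size $r-1$). The same argument applies when $S\not\subseteq$ any member of $\mathcal H$. So every circuit has size at least $r$, and $M$ is paving. Each $H\in\mathcal H$ is dependent of rank $r-1$, and its elements form a circuit: every proper subset is independent. For $e\notin H$, the set $H+e$ has rank $r$, because some $(r-1)$-subset of $H$ together with $e$ is a basis. Indeed, if $H-h+e\in\mathcal H$ for two values of $h$, those two members would meet in $r-1$ elements; so since $r\ge2$, some $H-h+e$ is a basis. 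This makes $H$ a flat of rank $r-1$, i.e.\ a hyperplane, and hence a circuit-hyperplane. Conversely, a circuit-hyperplane is a non-basis $r$-set and so lies in $\mathcal H$.

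For the dual, we use the standard fact that the bases of $M^*$ are the complements of the bases of $M$, namely $\binom{E}{n-r}\setminus\{E\setminus H\}$. The complements still pairwise meet in at most $(n-r)-2$ elements, since $|(E\setminus H)\cap(E\setminus H')|=n-2r+|H\cap H'|$. The paving argument above therefore applies verbatim to $M^*$ (for $n-r\ge2$; when $n-r=1$ paving is trivial), so $M$ is sparse paving. Loopless for $r\ge2$ follows because every singleton has size at most $r-1$ and is therefore independent. The main obstacle, such as it is, is the careful edge case handling ($r=1$, $n-r=1$), which I would check separately rather than via the general counting.
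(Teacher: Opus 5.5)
Your proof is correct and follows the paper's argument essentially step for step. Both rest on the observation that two $r$-sets meeting in $r-1$ elements cannot both lie in $\mathcal H$, which gives nonemptiness, basis exchange, independence of all $(r-1)$-subsets (hence paving), the identification of the circuit-hyperplanes, and the dual statement via complements. One small simplification: each $H-h+e$ with $e\notin H$ meets $H$ in $r-1$ elements, so it is itself a basis. This makes $H$ a flat for every $r\geq1$ directly, so the step does not need your restriction to $r\geq2$.
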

\begin{proof}
Every $(r-1)$-subset of $E$ lies in at least two $r$-subsets, at most one
of which is in $\mathcal H$. It therefore extends to a member of
$\mathcal B=\binom Er\setminus\mathcal H$, and in particular
$\mathcal B\ne\varnothing$.
To check basis exchange, take $B_1,B_2\in\mathcal B$ and
$e\in B_1\setminus B_2$. If $B_2\setminus B_1$ has one element $f$,
then $(B_1\setminus\{e\})\cup\{f\}=B_2$. Otherwise there are at least
two candidate sets $(B_1\setminus\{e\})\cup\{f\}$, and any two share
$r-1$ elements. At most one candidate is in $\mathcal H$, so a basis
exchange is again possible.

Every $(r-1)$-subset is independent, hence the resulting matroid is
paving. Each $H\in\mathcal H$ is a circuit of rank $r-1$. For
$e\notin H$ and $h\in H$, the set $(H\setminus\{h\})\cup\{e\}$
is a basis, so $H$ is also a flat. Conversely, every circuit-hyperplane
has size $r$ and is a nonbasis, hence belongs to $\mathcal H$.
The complements of the members of $\mathcal H$ have pairwise intersections
of size at most $n-r-2$. Applying the same argument to the complementary
bases shows that the dual is paving. If $r\geq2$, every singleton lies in
an independent $(r-1)$-subset, so there are no loops.
\end{proof}

Let $A(n,4,w)$ be the largest size of a family of $w$-subsets of an
$n$-element set with pairwise intersections of size at most $w-2$.
Equivalently, this is the usual binary constant-weight code number with
minimum Hamming distance at least four. By Lemma~\ref{lem:stable-family-construction},
for $2\leq d<n$, the possible circuit-hyperplane counts at fixed $n,d$
are precisely
\[
0,1,\ldots,A(n,4,d).
\]
Indeed, every subfamily of an admissible family is again admissible.
Complementation gives $A(n,4,w)=A(n,4,n-w)$.

For the rank-five plateaux, take the Witt $4$-$(11,5,1)$
design~\cite{Witt1938}. It has $66$ blocks, and distinct blocks intersect
in at most three points. Each point lies in $30$ blocks and each pair in
$12$ blocks. Thus $36$ blocks avoid a fixed point, and $18$ avoid two fixed
points. On the remaining ten and nine points, respectively, these families
satisfy the hypotheses of Lemma~\ref{lem:stable-family-construction}.
Choosing $25$ blocks from the first family, $3$ from the second, and all
$66$ blocks on eleven points realizes the respective triples
$(5,5,25)$, $(4,5,3)$, and $(6,5,66)$.

The counts $18$ and $36$ are also optimal. Counting incidences between
points and blocks gives the elementary bound
\begin{equation}\label{eq:johnson-count}
A(n,4,w)\leq
\left\lfloor\frac{n}{w}A(n-1,4,w-1)\right\rfloor\qquad(w\geq2).
\end{equation}
In fact, the blocks containing a fixed point, with that point deleted,
form a family counted by $A(n-1,4,w-1)$.
Since a family of $2$-subsets has to be a matching,
$A(7,4,2)=3$. Successive applications of \eqref{eq:johnson-count} give
\[
A(8,4,3)\leq8,\qquad A(9,4,4)\leq18,\qquad A(10,4,5)\leq36.
\]
Together with the Witt constructions and complementation, this proves
\[
A(9,4,5)=18,\qquad A(10,4,5)=36.
\]
Consequently, all exceptional values $4\leq\lambda\leq18$ and
$26\leq\lambda\leq36$ occur for $(m,d)=(4,5)$ and $(5,5)$,
respectively, and no larger value occurs at either pair.

For $(m,d)=(5,8)$, an explicit code of size $123$ is given in
Brouwer's tables~\cite{BrouwerTables}. Taking complements and subfamilies
therefore realizes every $83\leq\lambda\leq123$.
This is a lower-bound construction, not a determination of $A(13,4,5)$.
For $(m,d)=(6,11)$, the final exceptional case in
Theorem~\ref{thm:mode} is realizable if and only if $A(17,4,6)\geq940$.
The same elementary counting bound gives
\[
A(14,4,3)\leq28,\quad A(15,4,4)\leq105,\quad
A(16,4,5)\leq336,\quad A(17,4,6)\leq952,
\]
starting with $A(13,4,2)=6$.
Thus only $940\leq\lambda\leq952$ could realize this exceptional case.
We do not assert its realizability; the coefficient comparison remains
valid for every matroid with these parameters.

\begin{coro}\label{cor:asymptotic}
Let $k_n$ be the number of loopless matroids on a fixed $n$-element set
whose Kazhdan--Lusztig polynomials are unimodal. Then
\[
\lim_{n\to\infty}\frac{\log k_n}{\log N_n}=1.
\]
\end{coro}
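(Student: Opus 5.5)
The plan is to squeeze $k_n$ between the number $s_n$ of sparse paving matroids and the total number $N_n$, and then invoke the theorem of Pendavingh and van der Pol. Since $k_n\leq N_n$, the upper bound $\log k_n/\log N_n\leq1$ is automatic once $N_n>1$. The real content is therefore the lower bound: we want to show that $k_n$ is at least the number of loopless sparse paving matroids on the $n$-element set, and that this number has the same logarithmic order as $s_n$.

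\textbf{Step 1: loopless sparse paving matroids are counted in $k_n$.} Every loopless sparse paving matroid on the set contributes to $k_n$ by Theorem~\ref{thm:strict-unimodality}. This includes the degenerate cases of rank $0$ or corank $0$, where $P_M(t)=1$.

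\textbf{Step 2: only few sparse paving matroids have loops.} Take a sparse paving matroid with a loop $e$. Then $\{e\}$ is a circuit, so pavingness forces rank $d\leq1$. The rank-$0$ matroid and the rank-one matroids with loops are at most $2^n$ in number, since a rank-one matroid is determined by its set of loops. Hence
\[
k_n\geq s_n-2^n.
\]

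\textbf{Step 3: the counts grow fast enough.} Pendavingh and van der Pol give $\log s_n/\log N_n\to1$. We also need $\log N_n$ to grow faster than $n$. The simplest source is Lemma~\ref{lem:stable-family-construction}: take $r=\lfloor n/2\rfloor$ and a family $\mathcal H$ of $r$-subsets with pairwise intersections at most $r-2$. The family of all $r$-subsets with a fixed parity of element sum works and has size at least $\binom{n}{r}/2$. Every subfamily of $\mathcal H$ gives a distinct sparse paving matroid, so
\[
\log s_n\geq\tfrac12\binom{n}{\lfloor n/2\rfloor}\log2,
\]
which is far larger than $n$. Consequently $s_n-2^n\geq s_n/2$ for large $n$, and
\[
\log k_n\geq\log s_n-\log2 .
\]
Dividing by $\log N_n\to\infty$ and using Pendavingh--van der Pol yields $\liminf\log k_n/\log N_n\geq1$, which completes the proof.

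The only point requiring care is Step 2: making sure the excluded looped matroids are negligible, and confirming that Theorem~\ref{thm:strict-unimodality} covers all loopless sparse paving matroids, including small ranks. Both are routine.
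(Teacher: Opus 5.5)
Steps 1 and 2 are correct, and the overall structure matches the paper's proof. That structure is: the bound $k_n\ge s_n-2^n$ (a sparse paving matroid with a loop has rank at most one), a superexponential lower bound on $s_n$ from Lemma~\ref{lem:stable-family-construction} to make $2^n$ negligible, and then Pendavingh--van der Pol together with $k_n\le N_n$.

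The gap is in Step 3, which you did not flag. The family of $r$-subsets whose element sum has a fixed parity does not satisfy $|H\cap H'|\le r-2$. Choose two elements $a\ne b$ whose labels have the same parity, and an $(r-1)$-set $S$ avoiding both. Then $S\cup\{a\}$ and $S\cup\{b\}$ have sums of the same parity but share $r-1$ elements.

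In fact no admissible family can have the size you claim. Each $(r-1)$-subset lies in at most one member, and each member contains $r$ of them. So an admissible family has at most $\binom{n}{r-1}/r=\binom{n}{r}/(n-r+1)$ members. For $r=\lfloor n/2\rfloor$ and $n\ge3$ this is strictly less than $\binom{n}{r}/2$. Your displayed bound $\log s_n\ge\frac12\binom{n}{\lfloor n/2\rfloor}\log2$ is therefore unsupported. Some superlinear lower bound on $\log s_n$ is genuinely needed here: without it, $s_n-2^n$ could be small compared with $s_n$, and the comparison with $N_n$ says nothing.

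The repair is the paper's construction. Label the ground set $0,\dots,n-1$ and sort the $r$-subsets by element sum modulo $n$ instead of modulo $2$. If $S\cup\{a\}$ and $S\cup\{b\}$ lie in the same class, then $a\equiv b\pmod n$, hence $a=b$, so every class is admissible. The largest class has at least $\binom{n}{r}/n\ge 2^n/(n(n+1))$ members, since the central binomial coefficient is the largest of $n+1$ terms summing to $2^n$. This gives $\log_2 s_n\ge 2^n/(n(n+1))$, which still grows much faster than $n$. Hence $s_n\ge2^{n+1}$ for large $n$, and the rest of your Step 3 goes through unchanged. With this correction your argument coincides with the paper's.
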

\begin{proof}
A sparse paving matroid with a loop has rank at most one, so there are at
most $2^n$ such matroids. Theorem~\ref{thm:strict-unimodality} therefore gives
$k_n\geq s_n-2^n$.

For completeness, $2^n/s_n\to0$ follows from a simple construction.
For $n\geq4$, put $r=\lfloor n/2\rfloor$ and label the ground set by
$0,\ldots,n-1$. Partition its $r$-subsets according to the sum of their
elements modulo $n$. Two sets in the same part cannot intersect in
$r-1$ elements, since the two remaining elements would then be congruent
modulo $n$. The largest part has at least $\binom nr/n$ members, and each
of its subfamilies defines a different sparse paving matroid by
Lemma~\ref{lem:stable-family-construction}. Hence
\[
s_n\geq 2^{\binom nr/n},
\]
which implies $2^n/s_n\to0$. It follows that
$\log(s_n-2^n)/\log s_n\to1$. The conclusion now follows from
$k_n\leq N_n$ and the theorem of Pendavingh and van der Pol~\cite{PV2015}.
\end{proof}

\section{Non-degeneracy}\label{sec:nondegeneracy}

Following Ferroni and Vecchi~\cite{FV2022}, a matroid of positive rank $d$ is
\emph{non-degenerate} if
\[
\deg P_M(t)=\left\lfloor\frac{d-1}{2}\right\rfloor.
\]

Gedeon, Proudfoot and Young~\cite[Conjecture~2.5]{Gedeon2017}
conjectured that every connected regular matroid is non-degenerate.
For sparse paving matroids, we give a complete classification of the
degenerate cases of positive rank and corank.

The coefficient at index $\lfloor(d-1)/2\rfloor$ determines whether
$M$ is non-degenerate. We first prove that it is positive when $m\geq2$
and $d\geq5$, and then consider the remaining ranks and coranks.

\begin{lem}\label{lem:terminal-positive-spkl}
Let $m\geq2$ and $d\geq5$, and put
\[
r=\left\lfloor\frac{d-1}{2}\right\rfloor.
\]
For every $M\in\mathrm{SP}(d,m+d)$, we have $a(m,d,r)>0$.
\end{lem}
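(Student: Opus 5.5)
Since unimodality is already established, we only need to show that the last coefficient $a(m,d,r)$ is positive. The plan uses \eqref{eq:a-coeff} directly: $a(m,d,r)=u(m,d,r)-\lambda\,\omega(d,r)$. We bound $\lambda$ by the sharper bound $\binom{m+d}{m}/(\max\{m,d\}+1)$ from \eqref{eq:lambda-range}. We then show that $u(m,d,r)$ exceeds this bound times $\omega(d,r)$.

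The two ingredients are explicit. First, $\omega(d,r)$ is given by \eqref{eq:delta-d-u1}. For $d=2r+1$ it equals $\frac{2d!}{r(r+2)(r-1)!(r+1)!}$, essentially a Catalan-type number. For $d=2r+2$ it has a similar closed form with the extra factor $(d-2r-1)!=1$. Second, $u(m,d,r)$ is given by Lemma~\ref{lem:u-fixed-d}. At $i=r$ the sum over $h$ runs from $2r$ to $d-1$, so it has one term when $d$ is odd and two terms when $d$ is even. This gives a closed form:
\[
u(m,2r+1,r)=\frac{1}{r+1}\binom{2r+1+m}{r}\binom{m+r-1}{m-1},
\]
and a two-term analogue for even $d$. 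Thus $a(m,d,r)$, after substituting the maximal $\lambda$, becomes an explicit ratio of products of factorials in $m$ and $r$.

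We would split into $m\le d$ and $m\ge d$, matching the two branches of the $\lambda$-bound.

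\emph{Case $m\ge d$.} The bound is $\lambda\le\binom{m+d}{m}/(m+1)$. Here $u(m,d,r)$ contains the factor $\binom{m+r-1}{m-1}$, which grows like $m^{r}$. It beats $\omega(d,r)\binom{m+d}{m}/\bigl((m+1)\binom{m+d}{r}\bigr)$, which grows like $m^{d-r-1}$, since $d-r-1\le r$. To make this rigorous, we would form the ratio
\[
\frac{\lambda_{\text{bound}}\,\omega(d,r)}{u(m,d,r)}
\]
and show that it is less than $1$. The approach is to check it at $m=d$ and show it is nonincreasing in $m$ by examining its consecutive ratio.

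\emph{Case $2\le m\le d$.} The bound is $\binom{m+d}{m}/(d+1)$. Here we would fix $m$ and argue by monotonicity in $r$, or fix $r$ and induct on $m$. The latter can use the $m$-recurrence \eqref{eq:u-shift-m}: $u(m,d,r)$ gains at least the factor $\frac{d+m+1}{d-r+m+1}$ per step, while the bound on $\lambda$ gains exactly $\frac{m+d+1}{m+1}$ per step. The problematic point is that this gain factor of the $\lambda$ bound is larger than the recurrence's multiplicative factor. So the inhomogeneous term of \eqref{eq:u-shift-m} must supply the difference, and we would verify that via a polynomial positivity check of the kind in Appendix~\ref{app:positivity-certificates}. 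The base case $m=2$ follows from the closed forms directly, as a rational function in $r$ whose positivity is checked by substituting $r=x+2$ and observing nonnegative coefficients.

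\textbf{Main obstacle.} We expect the hardest part to be small $m$ relative to $d$, especially $m=2,3$, where the margin is thinnest. For odd $d$ with $m=2$, the check becomes
\[
\frac{(r+1)(2r+3)!}{(r+1)!\,(r+3)!}\cdot\ldots\;>\;\frac{(d+2)}{2}\,\omega(d,r),
\]
which must be verified carefully. Small ranks $d=5,6$ may need direct numerical evaluation, in the spirit of the rank-five formulas preceding Theorem~\ref{thm:mode}.
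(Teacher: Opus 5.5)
Your setup matches the paper's. You factor $a(m,d,r)=\omega(d,r)\bigl(T_{m,d}-\lambda\bigr)$ with $T_{m,d}=u(m,d,r)/\omega(d,r)$, take $u(m,d,r)$ from the one- or two-term sum in Lemma~\ref{lem:u-fixed-d}, and compare with $\binom{m+d}{m}/(\max\{m,d\}+1)$, splitting at $m=d$. Your odd-$d$ closed form is correct. Where you differ is in finishing the comparison, and the paper needs no induction there. With $C=\binom{m+d}{m}$ and $d=2k+1$ one gets $T_{m,d}/C=m(k+2)/\bigl(2(k+m)(k+m+1)\bigr)$, and both comparisons factor:
\[
T_{m,d}-\frac{C}{2k+2}=\frac{C(m-1)(k^2+k-m)}{2(k+1)(k+m)(k+m+1)},\qquad
T_{m,d}-\frac{C}{m+1}=\frac{Ck(m^2-3m-2k-2)}{2(m+1)(k+m)(k+m+1)}.
\]
For $d=2k$ the paper uses $k^2+km-m+1>(k-1)(k+m+1)$ to bound $T_{m,d}/C$ below by $F_k(m)=\frac{k(k+2)}{2(k+1)}\frac{m}{(m+k-1)(m+k)}$. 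On $2\le m\le2k$ it checks only $m=2$ and $m=2k$, since $m/((m+k-1)(m+k))$ rises and then falls. For $m\ge2k$ it writes $(m+1)F_k(m)$ as a product of two increasing factors. No small-rank numerics are needed.

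Two parts of your plan would not work as written.

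First, the induction on $m$ for $2\le m\le d$ asks the inhomogeneous term $E_m$ of \eqref{eq:u-shift-m} to cover the gap $\frac{\omega C(m)}{d+1}\bigl(\frac{m+d+1}{m+1}-\frac{m+d+1}{d-r+m+1}\bigr)$ by itself. That inequality is false at $(m,d)=(4,5)$: there $E_4=75$, while the gap is $315/4$. The conclusion at $m=5$ holds only because of the slack $u(4,5,2)-5\cdot126/6=15$ carried over from $m=4$. For odd $d$ your step criterion reduces to $m(k^2-1)+k-m^2\ge0$, which fails only at $k=2$, $m=4$. You did flag $d=5$ for separate treatment, so this is repairable. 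Still, the uniform polynomial certificate you describe cannot exist, and the detour is unnecessary because $T_{m,d}/C$ is already explicit in $m$.

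Second, the growth heuristic for $m\ge d$ is wrong. In both parities the two sides grow at the same rate in $m$. For odd $d$, $d-r-1=r$. For even $d$, $d-r-1=r+1$, and the $h=d-1$ term makes $u(m,d,r)/\binom{m+d}{r}$ grow like $m^{r+1}$, not $m^r$. So leading constants decide: $C/\bigl((m+1)T_{m,d}\bigr)$ tends to $2/(k+2)$ for odd $d$ and to $2(k+1)/(k(k+2))$ for even $d$. For even $d$ the exact ratio also contains the factor $(k+m+1)/\bigl((k-1)m+k^2+1\bigr)$, which increases in $m$, so monotonicity is not immediate. Passing to $F_k$ is what makes that step clean.
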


\begin{proof}
Put $C=\binom{m+d}{m}$ and
\[
T_{m,d}=\frac{u(m,d,r)}{\omega(d,r)}.
\]
By \eqref{eq:delta-d-u1}, $\omega(d,r)>0$. The coefficient formula gives
\begin{equation}\label{eq:terminal-factor}
a(m,d,r)=\omega(d,r)\bigl(T_{m,d}-\lambda\bigr).
\end{equation}
It suffices to show that $T_{m,d}$ exceeds the bound for $\lambda$ in
\eqref{eq:lambda-range}.

Suppose first that $d=2k+1$, where $k\geq2$. The sum in
Lemma~\ref{lem:u-fixed-d} has one term, so \eqref{eq:delta-d-u1} gives
\begin{equation}\label{eq:T-odd}
T_{m,2k+1}
=C\frac{m(k+2)}{2(k+m)(k+m+1)}.
\end{equation}
For $2\leq m\leq2k+1$, we have $\lambda\leq C/(2k+2)$ and
\[
T_{m,2k+1}-\frac{C}{2k+2}
=\frac{C(m-1)(k^2+k-m)}
{2(k+1)(k+m)(k+m+1)}>0,
\]
since $k^2+k-m\geq k^2-k-1>0$. For $m\geq2k+1$, the bound is
$\lambda\leq C/(m+1)$, and
\[
T_{m,2k+1}-\frac{C}{m+1}
=\frac{Ck(m^2-3m-2k-2)}
{2(m+1)(k+m)(k+m+1)}>0.
\]
The quadratic factor is increasing in $m$ on this range and equals
$4(k^2-k-1)>0$ at $m=2k+1$.

Now let $d=2k$, where $k\geq3$. The sum in Lemma~\ref{lem:u-fixed-d}
has two terms. The same calculation gives
\begin{equation}\label{eq:T-even}
T_{m,2k}
=C\frac{km(k+2)(k^2+km-m+1)}
{2(k-1)(k+1)(k+m-1)(k+m)(k+m+1)}.
\end{equation}
The identity
\[
k^2+km-m+1=(k-1)(k+m+1)+2
\]
provides a useful lower bound:
\begin{equation}\label{eq:terminal-even-lower}
\frac{T_{m,2k}}{C}>
F_k(m):=\frac{k(k+2)}{2(k+1)}
\frac{m}{(m+k-1)(m+k)}.
\end{equation}
We compare $F_k(m)$ with $1/(2k+1)$ for $m\leq2k$, and with
$1/(m+1)$ for $m\geq2k$.

For $2\leq m\leq2k$, differentiation gives
\[
\frac{\d}{\d m}\frac{m}{(m+k-1)(m+k)}
=\frac{k(k-1)-m^2}{(m+k-1)^2(m+k)^2}.
\]
Thus $m/((m+k-1)(m+k))$ first increases and then decreases, so its
minimum on the interval is at an endpoint. At those endpoints,
\[
F_k(2)=\frac{k}{(k+1)^2},\qquad
F_k(2k)=\frac{k(k+2)}{3(k+1)(3k-1)}.
\]
Both exceed $1/(2k+1)$. Indeed,
\begin{align*}
(2k+1)F_k(2)-1&=\frac{k^2-k-1}{(k+1)^2}>0,\\
(2k+1)F_k(2k)-1
&=\frac{2k^3-4k^2-4k+3}{3(k+1)(3k-1)}>0.
\end{align*}
For the second inequality, set $k=x+3$. Its numerator becomes
$2x^3+14x^2+26x+9>0$.
It follows that $T_{m,2k}>C/(2k+1)\geq\lambda$ on this interval.

For $m\geq2k$, write
\[
(m+1)F_k(m)=\frac{k(k+2)}{2(k+1)}
\frac{m}{m+k}\frac{m+1}{m+k-1}.
\]
Each of the two fractions is increasing in $m$, since $k\geq3$.
Consequently,
\[
(m+1)F_k(m)\geq(2k+1)F_k(2k)>1,
\]
and hence $T_{m,2k}>C/(m+1)\geq\lambda$ on this range as well.
Together with the odd-rank calculation, \eqref{eq:terminal-factor}
proves the lemma.
\end{proof}

Together with the corank-one case and the coefficient formulas in low
rank, this gives the following classification.

\begin{theo}[Non-degeneracy]\label{thm:nondegeneracy}
Let $M$ be a loopless sparse paving matroid of rank $d$ on $m+d$ elements,
with $m,d\geq1$ and $\lambda$ circuit-hyperplanes. Then $M$ is degenerate
if and only if
\[
 m=1,\quad d\geq3\text{ is odd},\quad\lambda=1,
 \qquad\text{or}\qquad(m,d,\lambda)=(4,3,7).
\]
\end{theo}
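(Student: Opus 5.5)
We split the classification according to $m$ and $d$; for $m\geq2$, $d\geq5$ the work is already done. Lemma~\ref{lem:terminal-positive-spkl} gives $a(m,d,r)>0$ with $r=\lfloor(d-1)/2\rfloor$, so every such $M$ is non-degenerate. It remains to treat $m=1$ with all $d\geq1$, and $m\geq2$ with $d\leq4$.

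\emph{Corank one.} By Lemma~\ref{lem:m-one-mode} we have $\lambda\leq1$, and $P_M$ equals $P_{U_{1,d}}$ when $\lambda=0$ and $P_{U_{1,d-1}}$ when $\lambda=1$. The uniform formula $u(1,d,i)=\frac1{d-i}\binom{d+1}{i}\binom{d-i}{i+1}$ is positive for $0\leq i\leq\lfloor(d-1)/2\rfloor$. So $\lambda=0$ gives a non-degenerate matroid. For $\lambda=1$ the degree is $\lfloor(d-2)/2\rfloor$, which equals $\lfloor(d-1)/2\rfloor$ exactly when $d$ is even; for $d=1$ the matroid is loopless with $\lambda=0$. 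Hence the degenerate cases here are exactly $d\geq3$ odd with $\lambda=1$. I would also check the small ranks $d=2,3$ against \eqref{eq:sp-kl-formula} directly, since that formula was stated for $d\geq2$.

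\emph{Small ranks, $m\geq2$.} For $d=1,2$ we have $\lfloor(d-1)/2\rfloor=0$ and $P_M$ has constant term $1$, so $M$ is non-degenerate. For $d=3,4$ we need $a(m,d,1)>0$. The formula $a(m,d,1)=\frac{dC}{m+1}-(m+d)-\lambda(d-1)$ from Lemma~\ref{lem:a1-positive}, with $C=\binom{m+d}{m}$, is valid for $d\geq3$, and it is affine and decreasing in $\lambda$. Substituting $\lambda\leq C/(\max\{m,d\}+1)$ from \eqref{eq:lambda-range} reduces the question to explicit inequalities.
\begin{itemize}
\item For $d=4$, both $m\leq4$ (with $\lambda\leq C/5$) and $m\geq4$ (with $\lambda\leq C/(m+1)$) should give strict positivity, since $4/(m+1)-3/5>0$ when $m<4$ and $1/(m+1)$ times $C$ grows faster than $m+4$.
\item For $d=3$, the bound $\lambda\leq C/(m+1)$ for $m\geq3$ gives $a\geq C/(m+1)-(m+3)$, where $C=\binom{m+3}{3}$. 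This is zero at $m=4$: $C/5=7$ and $m+3=7$. So $(m,d,\lambda)=(4,3,7)$ gives $a=0$, and this is the exceptional case. I would verify positivity for $m\geq5$ (cubic versus linear) and for $m=2,3$ separately. For $m=2$ the bound is $\lambda\leq C/4$, giving $a=3C/3-5-2\lambda\geq10-5-5=0$ when $\lambda=2.5$; integrality gives $\lambda\leq2$, so $a\geq1$. For $m=3$, $C=20$ and $\lambda\leq5$, so $a=15-6-2\lambda\geq-1$. This case needs the true maximum $A(6,4,3)=4$, obtainable from \eqref{eq:johnson-count} together with $A(5,4,2)=2$. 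Then $\lambda\leq4$ and $a\geq1$.
\item Finally, $(4,3,7)$ is realized by the Fano plane, via $A(7,4,3)=7$ and Lemma~\ref{lem:stable-family-construction}. Since the statement is an iff about parameters, the converse only needs $a=0$ there, which holds.
\end{itemize}

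The main obstacle is these boundary cases in low rank. At $(m,d)=(3,3)$ and at the other threshold values, the analytic bound \eqref{eq:lambda-range} is not sharp enough, so we must use the exact constant-weight code numbers from the counting bound \eqref{eq:johnson-count}, and we must make sure no other small $(m,d)$ reaches $a=0$. I would tabulate $a(m,d,1)$ at the maximal admissible $\lambda$ for $d=3,4$ and $2\leq m\leq6$, and use the asymptotic inequality beyond that.
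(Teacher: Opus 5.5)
Your proposal is correct and follows the paper's proof. It uses the same split: $m\geq2,\ d\geq5$ via Lemma~\ref{lem:terminal-positive-spkl}; corank one via $P_{U_{1,d}}$ versus $P_{U_{1,d-1}}$; $d\leq2$ trivially; and explicit formulas for $a(m,3,1)$ and $a(m,4,1)$ bounded using \eqref{eq:lambda-range}.

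The only local difference is at $(m,d)=(3,3)$. Your appeal to $A(6,4,3)\leq4$ is valid but unnecessary, because non-degeneracy only needs the top coefficient to be nonzero. The paper simply notes that $a(m,3,1)=m(m+3)/2-2\lambda$ could vanish only if $\lambda=m(m+3)/4$, which is not an integer for $m=2,3$; in particular $a(3,3,1)=9-2\lambda$ is odd.
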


\begin{proof}
If $d=1$, looplessness gives $\lambda=0$ and $P_M(t)=1$, so $M$ is
non-degenerate. Assume $d\geq2$ and put $r=\lfloor(d-1)/2\rfloor$.
For $m=1$, the bound \eqref{eq:lambda-range} gives
$\lambda\in\{0,1\}$. If $\lambda=0$, then
$P_M(t)=P_{U_{1,d}}(t)$ and its coefficient at index $r$ is positive.
If $\lambda=1$, equation~\eqref{eq:a-coeff} gives
$P_M(t)=P_{U_{1,d-1}}(t)$. The formula in
Lemma~\ref{lem:m-one-mode} shows that its coefficient at index $r$
vanishes exactly when $d\geq3$ is odd.

Now assume $m\geq2$. For $d\leq2$, we have $r=0$ and $P_M(0)=1$.
When $d=3$, equation~\eqref{eq:a-coeff} gives
\[
a(m,3,1)=\frac{m(m+3)}2-2\lambda.
\]
A zero would require $\lambda=m(m+3)/4$. This is not an integer for
$m=2,3$, and it equals $7$ for $m=4$. For $m\geq5$,
\[
\lambda\leq\frac{(m+3)(m+2)}6
<\frac{m(m+3)}4,
\]
where the difference between the last two expressions is
$(m-4)(m+3)/12$. Thus the only degenerate parameter triple in rank three
is $(m,d,\lambda)=(4,3,7)$.

For $d=4$,
\[
a(m,4,1)=\frac{m(m+4)(m+5)}6-3\lambda.
\]
For $m=2$, the bound $\lambda\leq3$ gives $a(2,4,1)\geq5$.
For $m\geq3$, the weaker bound
$\lambda\leq\binom{m+4}{m}/(m+1)$ gives
\[
a(m,4,1)\geq\frac{(m+4)(m^2+5m-18)}{24}>0.
\]
Finally, Lemma~\ref{lem:terminal-positive-spkl} covers every $d\geq5$.

\end{proof}

\begin{remark}
Both kinds of exceptional parameters are realizable. For the corank-one case, let $H$ have
$d$ elements and adjoin a coloop $e$ to the rank-$(d-1)$ uniform matroid
on $H$. Its bases are exactly the $d$-subsets of $H\cup\{e\}$ other than
$H$. Lemma~\ref{lem:stable-family-construction} shows that it is loopless
and sparse paving, with $H$ as its unique circuit-hyperplane.
For the rank-three case, take the seven nonzero vectors of $\mathbb F_2^3$.
The triples $\{x,y,x+y\}$, with $x\ne y$, form seven distinct sets,
and every pair of points belongs to exactly one of them.
Lemma~\ref{lem:stable-family-construction} therefore gives a rank-three
sparse paving matroid with seven circuit-hyperplanes. This is the Fano
matroid, and $a(4,3,1)=14-2\cdot7=0$.
\end{remark}

\appendix
\section{Six coefficient comparisons}\label{app:positivity-certificates}
This appendix records the polynomial comparisons used for coranks two and
three and for the induction on $m$. All six are obtained from explicit
rational expressions in the paper. After the indicated substitutions, each
polynomial has nonnegative coefficients and a positive constant term.
These facts imply positivity on the required parameter regions.

\subsection{The corank induction}
The numerator in \eqref{eq:m-step-numerator} can be written without expanded
recurrence coefficients. Put $n=m+d$ and
\[
\ell_m=d^2-3di-d+3i^2+im+2i.
\]
Then
\begin{align}
W(m,d,i)&=(n+1)(n-i+1)(i+m)\ell_{m+1}\notag\\
&\quad-(n+2)(i+m+2)\bigl(m\ell_m+i(d-i-1)(n+1)\bigr),
\label{eq:m-kernel}\\
N(m,d,i)&=(i+2)(m+1)(d-i-2)(d-i+1)(n+3)W(m,d,i)\notag\\
&\quad+4(i+m)(i+m+1)(i+m+2)Q(d,i)\notag\\
&\qquad\times\bigl((m+1)(n+3)-(n+1)(n-i+1)\bigr).
\label{eq:m-numerator-explicit}
\end{align}
Substituting the formulas for $H,K_m,Q_3$ in
Appendix~\ref{app:b-rec-data} proves these identities.
Define
\begin{align*}
N_+(a,b,c)&=N(a+3,3b+9+c,b+2),\\
N_-(a,b,c)&=-N(a+3,2b+3c+9,b+c+3).
\end{align*}
Write each polynomial as a polynomial in $a$ with coefficients in
$\mathbb Z[b,c]$. For each power of $a$, Table~\ref{tab:orthant-certificates} gives the
number of nonzero coefficients in $(b,c)$, the least such coefficient,
and the constant term. In particular, every entry certifies a polynomial
that is positive for $b,c\geq0$.

\begin{table}[htbp]
\centering\small
\begin{tabular}{@{}ccrrr@{}}
\toprule
Polynomial & Power of $a$ & Terms & Least coefficient & Constant term\\
\midrule
$N_+$ & $0$ & $51$ & $4$ & $7990080$\\
 & $1$ & $51$ & $1$ & $6868576$\\
 & $2$ & $43$ & $1$ & $1621024$\\
 & $3$ & $34$ & $2$ & $118688$\\
 & $4$ & $25$ & $1$ & $2720$\\
\midrule
$N_-$ & $0$ & $54$ & $8$ & $13440000$\\
 & $1$ & $54$ & $4$ & $6739040$\\
 & $2$ & $44$ & $8$ & $896720$\\
 & $3$ & $35$ & $5$ & $38080$\\
 & $4$ & $27$ & $1$ & $400$\\
\bottomrule
\end{tabular}
\caption{Coefficient comparisons for the two corank inductions.}
\label{tab:orthant-certificates}
\end{table}

Thus the expansions of $N_+$ and $N_-$ contain $204$ and $214$ nonzero
terms, respectively, all with positive coefficients.
The expressions \eqref{eq:m-kernel}--\eqref{eq:m-numerator-explicit}
fix the polynomials, including their integer scaling.

\subsection{Coranks two and three}
For clarity, set
\[
D_2(d,i)=2(i+1)(i+3)(d-i+2)Q(d,i),
\]
\[
D_3(d,i)=4(i+1)(i+3)(i+4)(d-i+2)(d-i+3)Q(d,i).
\]
The polynomials in Lemmas~\ref{lem:b-base-m2} and
\ref{lem:b-base-m3} are
\[
N_2=D_2\bigl(b(2,d,i)-1\bigr),\qquad
N_3=D_3\bigl(b(3,d,i)-1\bigr).
\]
They are obtained from the two- and three-term instances of
\eqref{eq:u-def-sum}, using \eqref{eq:delta-quartic}.
\Needspace{10\baselineskip}
The following table summarizes the coefficient data for the four expanded
polynomials used in the proofs.
\begin{center}\small
\begin{tabular}{@{}lrr@{}}
\toprule
Polynomial & Terms & Least coefficient\\
\midrule
$N_2(3x+y+7,x+1)$ & $34$ & $1$\\
$-N_2(2x+3y+4,x+y+1)$ & $33$ & $1$\\
$N_3(3x+y+6,x+1)$ & $51$ & $3$\\
$-N_3(2x+3y+4,x+y+1)$ & $54$ & $2$\\
\bottomrule
\end{tabular}
\end{center}
All four constant terms are positive. The six expansions can be checked
directly from the displayed formulas using exact arithmetic.

\section{The sharper bound on \texorpdfstring{$d=3i+1$}{d=3i+1}}
\label{app:boundary-comparison}
We prove the two inequalities used in Lemma~\ref{lem:boundary-pairs}.
For the first, a quartic has no interior minimum on the relevant interval,
so only its endpoint values are needed. For the second, translation by
the lower endpoint gives a polynomial with positive coefficients.

\begin{lem}\label{lem:boundary-affine}
Let $i\geq2$, $d=3i+1$, and define
\[
L_d(m)=\frac{m+d+2}{2(d+1)},\qquad
T_d(m)=\frac{m+d+2}{2(m+1)}.
\]
Then
\begin{align*}
Q_3L_d(m)+R_3-L_d(m+1)&>0 &&(2\leq m\leq3i),\\
Q_3T_d(m)+R_3-T_d(m+1)&>0 &&(m\geq3i+1).
\end{align*}
Here every recurrence coefficient is evaluated at $(m,3i+1,i)$.
\end{lem}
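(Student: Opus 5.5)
We specialize the recurrence data of Proposition~\ref{prop:b-rec} to the line $d=3i+1$ and reduce both inequalities to polynomial positivity in two variables. Set $n=m+3i+1$. By \eqref{eq:Q3-positive},
\[
Q_3(m,3i+1,i)=\frac{(m+1)(n+2)}{(n+1)(n-i+1)}.
\]
$R_3$ is obtained by dividing the inhomogeneous part of \eqref{eq:D-shift} by $\lambda_{\max}(m+1,d)\delta(d,i)$. The factor $Q(3i+1,i)$ appearing there is an explicit negative polynomial in $i$, by Lemma~\ref{lem:delta-sign}. Expanded, it is a quartic in $i$ that can be computed once and for all.

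For each of the two target expressions we multiply by the common positive denominator. After clearing denominators, the sign of $Q(3i+1,i)$ has to be absorbed consistently, as was done for $N_-$ in Lemma~\ref{lem:m-step}. This yields two numerators, $G_L(m,i)$ and $G_T(m,i)$. Each is a polynomial in $(m,i)$ whose positivity is to be shown on the respective region.

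\emph{Second inequality, $m\geq3i+1$.} Here we substitute $m=3i+1+x$ and $i=2+y$ with $x,y\geq0$. We then expand $G_T$ and verify that all of its coefficients are nonnegative and that its constant term is positive. This is a certificate of the same type as those in Appendix~\ref{app:positivity-certificates}. If a few negative coefficients appear, the fallback is to group them with dominating monomials, as done for $K(i)$ and $H(i)$.

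\emph{First inequality, $2\leq m\leq 3i$.} This is the harder step, because the region is bounded in $m$ and a translation of $m$ from one end alone will not give a positivity certificate. The plan is to regard $G_L(m,i)$, for fixed $i$, as a polynomial in $m$. The degree in $m$ should be four, since $L_d$ is affine in $m$ and the denominators are cubic. We then show that it has no interior local minimum on $[2,3i]$, so that it suffices to check the two endpoints $m=2$ and $m=3i$.

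\begin{enumerate}[label=(\roman*)]
\item Endpoint values. $G_L(2,i)$ and $G_L(3i,i)$ are single-variable polynomials. Positivity for $i\geq2$ follows by substituting $i=y+2$ and checking coefficients.
\item No interior minimum. The first approach is to show that $G_L$ is concave in $m$ on the interval. For this we check $\partial_m^2G_L<0$ on $[2,3i]$, using the substitution $m=2+s$, $i=2+y+s/3$, or, after scaling, $3i=m+t$ with $t\geq0$. If concavity fails, the alternative is to write $G_L$ in the variables $s=m-2\geq0$ and $t=3i-m\geq0$ and look for a positivity certificate in $(s,t)$ directly. Because $i=(s+t+2)/3$ enters only polynomially, this is a legitimate orthant substitution after multiplying by a power of $3$.
\end{enumerate}

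The main obstacle is step (ii): controlling the interior of the bounded interval $2\leq m\leq 3i$. The $(s,t)$-orthant substitution is the cleanest route if it produces only nonnegative coefficients. Otherwise the endpoint-plus-shape argument is the fallback.
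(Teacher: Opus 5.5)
Your overall plan matches the paper's: endpoint values plus a shape argument on $2\le m\le 3i$, and a translation $m=3i+1+x$ on the tail. Your tail argument works. The paper expands in $y=m-3i-1$ and its coefficients $s_j(i)$ are sums of terms positive for $i\ge2$, so $i=2+y$ gives nonnegative coefficients.

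The gap is step (ii): both ways you propose to exclude an interior minimum fail. The paper writes the first residual as $(3i+m+4)P_i(m)/\bigl((3i+2)D(i,m)\bigr)$ with $D>0$ and $P_i$ a quartic in $m$. For $i=3$ this gives
\[
P_3(m)=510960+712372m+64074m^2-5092m^3-474m^4,
\]
so $P_3''(2)=128148-61104-22752=44292>0$. The numerator is convex near $m=2$, and concavity on $[2,9]$ is false. Since $P_3(2)$, $P_3'(2)$ and $P_3''(2)$ are all positive, multiplying by positive increasing linear factors such as $3i+m+4$ does not change this.

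The $(s,t)$-orthant route fails for a more basic reason. Its vertex $s=t=0$ is $(m,i)=(2,2/3)$, which lies outside the hypothesis $i\ge2$. Evaluating the rational recurrence data there, with $d=3$, gives $Q_3=3/4$, $R_3=135/704$, and
\[
Q_3L_d(2)+R_3-L_d(3)=\tfrac{21}{32}+\tfrac{135}{704}-1=-\tfrac{107}{704}<0,
\]
while every denominator is positive. So your orthant polynomial has a negative constant term and is not even positive on the orthant. The true region has two vertices, $(m,i)=(2,2)$ and $(6,2)$, and no single orthant substitution of this kind fits it. (Minor point: $Q(3i+1,i)=-i(8i^2+3i-2)$ is cubic, not quartic.)

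The missing idea is the sign pattern of the coefficients. The paper shows $p_1(i)>0$, $p_2(i)>0$ and $p_4(i)=-2i(8i^2+3i-2)<0$ for $i\ge2$, and it needs nothing about $p_3$. Then
\[
\frac{P_i'(m)}{m^2}=\frac{p_1}{m^2}+\frac{2p_2}{m}+3p_3+4p_4m
\]
is strictly decreasing on $(0,\infty)$, from $+\infty$ to $-\infty$. Equivalently, the coefficients of $P_i'$ have exactly one sign change whatever the sign of $p_3$. By Descartes' rule, $P_i'$ then has a single positive zero, and it is a maximum of $P_i$.

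Hence $P_i$ increases and then decreases on $(0,\infty)$, so $\min_{[2,3i]}P_i=\min\{P_i(2),P_i(3i)\}$, and your step (i) finishes the proof. Without this argument, or another one valid uniformly in $i\ge2$, the first inequality is not established.
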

\begin{proof}
Put
\[
D(i,m)=2(i+m)(i+m+1)(i+m+2)(2i+m+2)(3i+m+2)(8i^2+3i-2).
\]
This denominator is positive for $i\geq2$ and $m\geq2$.
Substituting the recurrence formulas gives quartic polynomials $P_i(m)$
and $S_i(m)$ satisfying
\begin{align}
Q_3L_d(m)+R_3-L_d(m+1)
&=\frac{(3i+m+4)P_i(m)}{(3i+2)D(i,m)},\label{eq:boundary-left-residual}\\
Q_3T_d(m)+R_3-T_d(m+1)
&=\frac{(3i+m+4)S_i(m)}{(m+2)D(i,m)}.\label{eq:boundary-right-residual}
\end{align}
We record below the coefficients and evaluations needed to determine their signs.

Write $P_i(m)=\sum_{j=0}^4 p_j(i)m^j$.
The coefficients needed for its shape are
\begin{align*}
p_1(i)&=(i+1)\bigl(i^3(108i^3-19)+i^2(326i^3-152)
                         +i(274i^3-58)+4\bigr),\\
p_2(i)&=i^2(72i^4-96i^2-102)+i(108i^4-221i^2-1)+6,\\
p_4(i)&=-2i(8i^2+3i-2).
\end{align*}
Thus $p_1,p_2>0$ and $p_4<0$ for $i\geq2$.
For $p_2$, the two bracketed expressions are at least $666$ and $843$,
respectively, using $i^2\geq4$.
For $m>0$,
\[
\frac{P_i'(m)}{m^2}=\frac{p_1(i)}{m^2}+\frac{2p_2(i)}m+3p_3(i)+4p_4(i)m
\]
is strictly decreasing: its derivative is
$-2p_1(i)/m^3-2p_2(i)/m^2+4p_4(i)<0$.
It tends to $+\infty$ as $m\downarrow0$ and to $-\infty$ as
$m\to\infty$. Thus $P_i$ first increases and then decreases, and its
minimum on $[2,3i]$ is attained at an endpoint. The endpoint values are
\begin{align*}
P_i(2)&=(i+2)\bigl(i^3(276i^3-886)+i^2(908i^3-657)
                    +453i^4+2i+24\bigr)>0,\\
P_i(3i)&=4i(2i+1)(3i+1)\bigl(54i^5-67i^4-50i^3+3i^2-22i-8\bigr)>0.
\end{align*}
For the last inequality, putting $i=x+2$ changes the final factor to
\[
54x^5+473x^4+1574x^3+2415x^2+1566x+216.
\]
Hence $P_i(m)>0$ throughout $[2,3i]$.

For the second, write $y=m-3i-1\geq0$. Then
\begin{equation}\label{eq:boundary-tail-expansion}
S_i(3i+1+y)=\sum_{j=0}^4 s_j(i)y^j,
\end{equation}
where the coefficients can be grouped as follows:
\begin{align*}
s_0(i)&=6(2i+1)\bigl((108i^7-6)+10i^6+i(44i^4-65)
                         +338i^4+i^2(192i-81)\bigr),\\
s_1(i)&=(1296i^7-78)+308i^6+i(338i^4-577)
                         +2924i^4+i^2(1706i-643),\\
s_2(i)&=i^5(468i-114)+(68i^4-58)+744i^3+i(146i-213),\\
s_3(i)&=3\bigl(i^4(24i-20)+(8i^3-6)+i(29i-5)\bigr),\\
s_4(i)&=(2i-1)\bigl(2i^2(i-1)+i+2\bigr).
\end{align*}
Every displayed summand is positive for $i\geq2$.
Thus $S_i(m)>0$ for $m\geq3i+1$.
\end{proof}

\section{Explicit recurrence coefficients}\label{app:b-rec-data}

The coefficients in Proposition~\ref{prop:b-rec} have shorter formulas
when the common factors are kept together. We give those formulas here;
they also determine the polynomials used in the preceding appendices.
Put $n=m+d$ and let $Q(d,i)$ be the quartic in \eqref{eq:quartic-Q}.
Define
\begin{align}
H(m,d,i)
&=\frac{m(i+2)(d-i-2)(d-i+1)(n+2)}
{4(i+m)(i+m+1)Q(d,i)}\notag\\[-2pt]
&\qquad\times\bigl(d^2-3di-d+3i^2+im+2i\bigr),
\label{eq:H-compact}\\
K_d(m,d,i)
&=\frac{im(i+2)(d-i-2)(d-i-1)(n+1)(n+2)}
{4(d-2i)(i+m)Q(d,i)},\label{eq:Kd-compact}\\
K_m(m,d,i)
&=\frac{i(i+2)(d-i-2)(d-i-1)(d-i+1)(n+1)(n+2)}
{4(i+m)(i+m+1)Q(d,i)}.\label{eq:Km-compact}
\end{align}
In the notation of the proof of Proposition~\ref{prop:b-rec}, these are
\[
H=\frac{V}{\lambda_{\max}\delta},\qquad
K_d=\frac{n-i+1}{i+1}\frac{E_d}{\lambda_{\max}\delta},\qquad
K_m=\frac{n-i+1}{i+1}\frac{E_m}{\lambda_{\max}\delta}.
\]
Their rational forms follow by canceling the factorials in the uniform
coefficient recurrences and in \eqref{eq:delta-quartic}.

The homogeneous coefficients are
\begin{align}
Q_1(m,d,i)
&=\frac{(d-2i)(d-i+2)(n+3)Q(d,i)}
{(d-i-2)(n+1)(n-i+1)Q(d+1,i)},\label{eq:Q1-compact}\\
Q_2(m,d,i)
&=-\frac{(i+1)(i+3)(d-i-3)(n-i)Q(d,i)}
{(i+2)(d-2i-2)(d-2i-1)(d-i+1)Q(d,i+1)},
\label{eq:Q2-compact}\\
Q_3(m,d,i)
&=\frac{(m+1)(n+3)}{(n+1)(n-i+1)}.\label{eq:Q3-compact}
\end{align}
The inhomogeneous coefficients are
\begin{align}
R_1(m,d,i)
&=H(m,d+1,i)\notag\\
&\quad-Q_1(m,d,i)\bigl(H(m,d,i)+K_d(m,d,i)\bigr),
\label{eq:R1-compact}\\
R_2(m,d,i)
&=H(m,d,i+1)+\frac{i+1}{n-i}Q_2(m,d,i)H(m,d,i),
\label{eq:R2-compact}\\
R_3(m,d,i)
&=H(m+1,d,i)\notag\\
&\quad-Q_3(m,d,i)\bigl(H(m,d,i)+K_m(m,d,i)\bigr).
\label{eq:R3-compact}
\end{align}
Equations~\eqref{eq:D-shift} and \eqref{eq:D-shift-i} give these formulas
directly after normalization. In particular, the quartic in the denominator
of $Q_1$ is $Q(d+1,i)$, and that in the denominator of $Q_2$ is $Q(d,i+1)$.
The parameter ranges in Proposition~\ref{prop:b-rec} ensure that all denominators
are nonzero.

\section{The diagonal recurrence}\label{app:diagonal-recurrences}

For $i\geq2$, we derive the recurrence for $z(i)=b(i+2,3i+2,i)$ used in
Lemma~\ref{lem:critical-sign}. Starting from $b(i+2,3i+2,i)$, apply the
recurrence in $d$ three times, then the recurrence in $m$, and finally the
recurrence in the coefficient index. The successive arguments are
\[
\begin{aligned}
(i+2,3i+2,i)&\longrightarrow(i+2,3i+3,i)
 \longrightarrow(i+2,3i+4,i)\\
&\longrightarrow(i+2,3i+5,i)
 \longrightarrow(i+3,3i+5,i)\\
&\longrightarrow(i+3,3i+5,i+1).
\end{aligned}
\]
Every step lies in the domain of the corresponding recurrence, including
$i=2$. Their composition therefore has the form
\begin{equation}\label{eq:diagonal-affine}
z(i+1)=\alpha_2(i)z(i)+\beta_2(i).
\end{equation}
Substituting the formulas from Appendix~\ref{app:b-rec-data}, we obtain
\begin{align*}
\alpha_2(i)
&=-\frac{(i+1)(i+3)^2(2i+5)^2(4i+9)}
{3(i+2)(2i+1)(2i+3)(3i+5)(3i+7)(4i+5)},\\
\beta_2(i)
&=-\frac{(i+3)^2(2i+5)(28i^2+82i+55)}
{6(i+2)(2i+1)(3i+5)(3i+7)(4i+5)}.
\end{align*}
To check the composition, start with $A=1$ and $B=0$.
At each successive point, the recurrence $b_{\mathrm{new}}=Q_jb+R_j$
replaces the pair $(A,B)$ by $(Q_jA,Q_jB+R_j)$.
After the five shifts, cancellation gives the displayed
$(\alpha_2,\beta_2)$. Since $\alpha_2<0$, the proof of
\eqref{eq:z-bounds} uses both endpoints of the interval $(-3/8,0)$.

\section*{Declaration of AI use}
The main results of this paper were proved in 2023--2024 without
assistance from generative AI. The proofs at that stage relied on
extensive computations using code written entirely by the authors.
We did not submit the manuscript at that time because the proofs were
cumbersome and much of the computational code was included in the text.

During 2025--2026, we used generative AI tools to assist with language
editing, simplify parts of the proofs, and remove computational code
from the manuscript. The main mathematical ideas and overall proof
strategy remain those of the original work.

For the final version, we used these tools to recheck the code and verify
that the boundary cases were covered. We also used them to generate Lean
code to check selected calculations and implications, taking certain cited results as
assumptions. These checks supplement the mathematical proofs; they do
not constitute a complete formalization of the paper in Lean.
The authors take responsibility for the mathematical arguments, the code,
and the final manuscript.

\end{document}